\providecommand\@dotsep{5}\def\listtodoname{List of Todos}\def\listoftodos{\hypersetup{linkcolor=black}\@starttoc{tdo}\listtodoname\hypersetup{linkcolor=blue}}\makeatother
\newcommand{\bfn}{\boldsymbol n}
\newcommand{\bfR}{\boldsymbol R}
\newcommand{\bfx}{\boldsymbol x}
\newcommand{\bfb}{\boldsymbol b}
\newcommand{\bfp}{\boldsymbol p}
\newcommand{\bfq}{\boldsymbol q}
\newcommand{\bfy}{\boldsymbol y}
\newcommand{\bfF}{\boldsymbol F}
\newcommand{\bfxi}{\boldsymbol \xi}
\newcommand{\bfnu}{\boldsymbol \nu}
\newcommand{\bfeta}{\boldsymbol \eta}
\newcommand{\tn}{|\mspace{-1mu}|\mspace{-1mu}|}
\numberwithin{equation}{section}
\newtheorem{prop}{Proposition}[section]
\newtheorem{thm}{Theorem}[section]
\newtheorem{rem}{Remark}[section]
\newtheorem{cor}{Corollary}[section]
\newcommand{\jump}[1]{[\![#1]\!]}
\newcommand{\tnorm}[1]{\tn#1\tn}
\newcommand{\hK}{h}
\newcommand{\piF}{\pi_{F,l}}
\DeclareMathOperator*{\argmin}{arg\,min}
\title{
{Primal dual mixed finite element
  methods for the elliptic Cauchy problem}}
\author{Erik Burman\thanks{Department of Mathematics, 
University College London, Gower Street, London, 
UK--WC1E  6BT, 
United Kingdom; ({\tt e.burman@ucl.ac.uk})}
\and Mats G. Larson \thanks{Department of Mathematics and Mathematical Statistics, Ume{\aa} University, 
SE-901 87 Ume{\aa}, Sweden; ({\tt mats.larson@math.umu.se})}
\and Lauri Oksanen
\thanks{Department of Mathematics, University College London, 
Gower Street, London UK, WC1E 6BT, ({\tt l.oksanen@ucl.ac.uk})}
}
\begin{document}
\maketitle
\numberwithin{equation}{section} \maketitle
\begin{abstract}
We consider primal-dual mixed finite element methods for the solution
of the elliptic Cauchy problem, or other related data assimilation
problems. The method has a local conservation property. We derive a priori error estimates using known conditional stability
estimates and determine the minimal amount of weakly consistent
stabilization and Tikhonov regularization that yields optimal convergence for smooth exact solutions. The effect of perturbations in data
is also accounted for.  A reduced version of the method, obtained by choosing a
special stabilization of the dual variable, can be viewed as a variant of the 
least squares mixed finite element method introduced by Dard\'e, Hannukainen 
and Hyv\"onen in \emph{An {$H\sb {\sf{div}}$}-based mixed quasi-reversibility method
for solving elliptic {C}auchy problems}, SIAM J. Numer. Anal., 51(4) 2013. 
The main difference is that our choice of regularization does not depend on auxiliary 
parameters, the mesh size being the only asymptotic parameter.
Finally, we show that the reduced method can be used for defect correction 
iteration to determine the solution of the full method. 
The theory is illustrated by some numerical examples.
\end{abstract}

\begin{keywords}
Inverse problem, Elliptic Cauchy problem, Mixed finite element method,
Primal-dual method, Stabilized methods
\end{keywords}

\begin{AMS}
65N15, 65N30, 35J15
\end{AMS}
\section{Introduction}
Let $\Omega \in \mathbb{R}^d$, $d \in \{2,3\}$, be a convex polygonal/polyhedral domain,
with boundary $\partial \Omega$ and outward pointing unit normal $\bfnu$. We consider the
following elliptic Cauchy problem,
\begin{equation}\label{Cauchy_strong}
\begin{array}{rcl}
\nabla \cdot  (A  \nabla u) + \mu u &=& f + \nabla \cdot \bfF \mbox{ in } \Omega \\
u & = & g \mbox{ on } \Sigma \\
(A \nabla u )\cdot \bfnu & = & \psi \mbox{ on } \Sigma,
\end{array}
\end{equation}
where $\Sigma \subset \partial \Omega$. 
The problem data is given by $f \in
L^2(\Omega)$, $\bfF \in [L^2(\Omega)]^d$, $g \in
H^{\frac12}(\Sigma)$, $\psi \in
H^{-\frac12}(\Sigma)$, $\mu \in \mathbb{R}$. The diffusivity matrix $A
\in \mathbb{R}^{d\times d}$ is assumed to be symmetric positive
definite. Observe that the
second term in the right-hand side is well defined only in the weak sense, see (\ref{weak_Cauchy}) below for the precise formulation.
For the physical problem, the function $\bfF$ will be assumed to be zero, but it
will play a role for the numerical analysis. 

Contrary to a typical boundary value problem, the data $g$, $\psi$ is available only on the portion $\Sigma$ of the domain
boundary. 
Observe that on this portion, on the other hand, both the Dirichlet and
the Neumann data are known. For simplicity we only consider the case of
unperturbed Dirichlet boundary conditions.
We will assume that $\psi$ is some measured Neumann data, possibly
with perturbations $\delta \psi$. We also assume that the unperturbed
data has at least the additional regularity $g \in H^{\frac32}(\Sigma)$ and
$\psi \in H^{\frac12}(\Sigma)$ and that there
exists a solution $u \in
H^2(\Omega)$ to \eqref{Cauchy_strong} for the given $f, \, \psi$ and $g$. The elliptic Cauchy problem is
severly ill-posed \cite{Belg07} and even when a unique solution $u$
exists, small perturbations of data in the computational model can
have a strong impact on the result.

The computational approximation of ill-posed problems is a challenging topic. Indeed the
lack of stability of the physical model under study typically prompts Tikhonov
regularization on the continuous level \cite{LL69,TA77} in order to obtain a well-posed
problem, which then allows for standard approximation techniques to be
applied. Although convenient, this approach comes with the price of
having to estimate both the perturbation error induced by adding the
regularization and the approximation error due to discretization, in
order to assess the quality of the solution. For early works on finite element 
approximation of the elliptic Cauchy problem and ill-posed problems we
refer to \cite{Han82,FM86, Falk90, RHD99}.

Herein we will advocate a different
approach based on discretization of the ill-posed physical model in an
optimization framework,
followed by regularization of the discrete problem. This primal-dual approach was
first introduced by Burman in the papers \cite{Bu13, Bu14a,Bu14b,Bu16}, drawing
on previous work by Bourgeois and Dard\'e  on quasi reversibility
methods \cite{Bour05,Bour06,BD10a,BD10b} and further
developed for elliptic data assimilation problems \cite{BHL16},  for
parabolic data
reconstruction problems in \cite{BO16, BIHO17} and finally for unique
continuation for Helmholtz equation \cite{BNO17}. For a related method
using finite element spaces with $C^1$-regularity see \cite{CM15} and
for methods 
designed for well-posed, but indefinite problems, we refer to \cite{BLP97}
and for second order elliptic problems on non-divergence form see
\cite{WW15} and \cite{WW17}. Recently approaches similar to those
discussed in this work were proposed for the approximation of
well-posed convection--diffusion problems \cite{FK17} or porous media
flows \cite{LWZ17}.

The idea is to cast the ill-posed problem on
the form of an optimization problem under the constraint of the
satisfaction of the partial differential equation, and look for the solution of the
discrete form of the partial differential equation that allows for the best 
matching of the data. This problem is unstable also on the discrete level and to improve the
stability properties we use stabilization techniques known from the
theory of stabilized finite element methods. Typical stabilizers are least
squares penalty terms on fluctuations of discrete quantities over
element faces, or Galerkin least squares term on the residual, in the elements. Since both a forward and a dual problem must be solved, this approach 
doubles the number of degrees of freedom in the computation. 

The objective of the present work is to revisit the primal-dual
stabilized method for the Cauchy problem but in the context of mixed
finite element methods. This means that we use one variable to
discretize the flux variable and another for the primal variable. In
this framwork, the primal stabilizer, that typically is based on the penalization of fluctuations, can be formulated as the
difference between the flux variable and the flux evaluated using the
primal variable. Our method is designed by minimizing this fluctuation
quantity under the constraint of the conservation law. The use of the
mixed finite element formulation allows us to choose discrete spaces
in such a way that the conservation law is satisfied exactly on each
cell of the mesh. The resulting
system is large, but we show that a special choice of the
adjoint stabilizer allows for the elimination of the multiplier and a
reduction of the system to a symmetric least squares formulation, at
the price of exact local conservation. For the reduced case
local conservation is only satisfied asymptotically. 

The reduced method is identified as a variant of the method proposed by 
Dard\'e, Hannukainen and Hyv{\"o}nen in \cite{DHH13}. In this work the 
elliptic Cauchy problem was considered and a discrete solution was sought 
using Raviart-Thomas finite elements for the flux variable and standard 
Lagrange elements for the primal variable. Additional stability was obtained 
through Tikhonov regularization on both the primal and the flux variable. 
Contrary to \cite{DHH13}, our choice of regularization does not depend 
on auxiliary parameters, the mesh size being the only asymptotic parameter. 
This allows us to carry out a complete analysis of the rate of convergence of the method.

The convergence analysis is based on known conditional stability estimates for the elliptic
Cauchy problem, see e.g. \cite{ARRV09}. We prove estimates for the mixed FEMs that
in a certain sense can be considered optimal with respect to the
approximation order of the space, the stability properties of the
ill-posed problem and perturbations in data. For the
analysis using conditional stability estimates, we need an a priori bound on the 
discrete solution. This naturally leads to the introduction of a Tikhonov regularization on
the primal variable. The dependence of the regularization parameter on
the mesh-size is chosen so that optimal convergence is obtained for
unperturbed solutions, depending on the approximation order of the
space and the regularity of the exact solution. The analysis is
illustrated with some numerical examples. 

\subsection{The elliptic Cauchy problem}
The problem \eqref{Cauchy_strong} can be cast on weak form by
introducing the spaces
\[
V_{\Sigma}:= \{ v \in H^1(\Omega) : v\vert_{\Sigma} = g \}
\]
and, using $\Sigma' := \partial \Omega \setminus \Sigma$,
\[
V_{\Sigma'}:= \{ v \in H^1(\Omega) : v\vert_{\Sigma'} = 0 \}.
\]
We also introduce the bilinear forms
\[
a(u,v) := \int_{\Omega} (-A \nabla u \cdot \nabla v  + \mu u v )~
\mbox{d}x, \quad l(v) := \int_{\Sigma} \psi v ~
\mbox{d}s + \int_{\Omega} f v ~
\mbox{d}x + \int_{\Omega} \bfF \cdot \nabla v ~
\mbox{d}x.
\]
The weak formulation then reads: find $u \in V_{\Sigma}$ such that 
\begin{equation}\label{weak_Cauchy}
a(u,v) = l(v) \quad \forall v \in V_{\Sigma'}.
\end{equation}

Observe that this problem is severely ill posed (see e.g.
\cite{Belg07}). 
Moreover, if $f$ and $\psi$ are chosen arbitrarily, it may fail to have a solution. 
We will assume below that 
we have at our disposal perturbed data, 
$$\tilde \psi := \psi + \delta \psi, \quad \delta \psi \in L^2(\Sigma)
$$ 
and 
$$
\tilde f = f + \delta f, \quad \delta f \in L^2(\Omega),
$$ 
such that, in the unperturbed case $(\delta \psi = 0,\, \delta f = 0)$, there 
is a solution $u \in H^2(\Omega)$ of (\ref{weak_Cauchy}). We then arrive 
to the following perturbed problem, find $u \in V_\Sigma$ such that
\[
a(u,v) = \tilde l(v),\quad \forall v \in V_{\Sigma'},
\]
where the perturbed right hand side is given by
\[
\tilde l(v) := \int_{\Sigma} \tilde \psi v ~
\mbox{d}s + \int_{\Omega} \tilde f v ~
\mbox{d}x.
\]
Here we have omitted the contribution from $\bfF$ since this term is
assumed to be zero for the physical problem. The problem posed with the
perturbed data most likely does not have a solution.

We define for $k \ge 0$,
\[
H^k_{div,\psi} := \{\bfq \in H^k(\Omega): \nabla \cdot \bfq \in
H^k(\Omega) \mbox{ and } \bfq \cdot \bfnu\vert_\Sigma = \psi\}.
\]
Assuming $f \in L^2(\Omega)$, the flux variable $\bfp := A \nabla
u$ is in $H_{div,\psi} := H^0_{div,\psi}$.
We will write also $H_{div} = H_{div,0}$. For the finite element method we
will use the equation \eqref{Cauchy_strong} written on mixed form,
that is, find $u \in V_\Sigma, \, \bfp \in H_{div,\psi}$ such that
\begin{align}\label{eq:mixed_form1}
\bfp - A \nabla u& =
0 \mbox{ in } \Omega,\\
\label{eq:mixed_form2}
\nabla \cdot \bfp + \mu u  & = f \mbox{ in } \Omega.
\end{align}
The method that we will propose below will be based on minimizing the
left hand side of \eqref{eq:mixed_form1} under the constraint of
\eqref{eq:mixed_form2}.

In the analysis below we will use the following notation for the $L^2$-scalar products and
norms on
$\omega \subset \mathbb{R}^d$ and $\sigma \subset \mathbb{R}^{d-1}$,
\[
(u,v)_\omega:=\int_\omega u v ~\mbox{d}x, \mbox{ with norm }
\|u\|_\omega = (u,u)_\omega^{\frac12}
\]
and
\[
\left< u,v \right>_\sigma:=\int_\sigma u v ~\mbox{d}s, \mbox{ with norm }
\|u\|_\sigma =\left< u,u \right>_\sigma^{\frac12}.
\]
With some abuse of notation we will not distinguish between the norms
of vector valued and scalar quantities.
\subsection{Stability properties of the Cauchy problem}
The literature on the stability properties of
the elliptic Cauchy problem spans more than a hundred years, see for instance \cite{Hada02,Pay60,Pay70,ABF06,Belg07,ARRV09,BD10c}. 
The results known as quantitative unique
continuation or quantitative uniqueness are useful for numerical
analysis. For our analysis we will use the results in 
\cite{ARRV09},
and we refer the reader to this review paper by Allessandrini and 
his co-authors for background on the analysis of
the Cauchy problem. To keep down the technical detail we will here
present their main results on a simplified form suitable for our
analysis. In particular, we do not track the constants related to the
geometry of the domain. For the complete results, as
well as full proofs, we refer to \cite{ARRV09}.
First we introduce the following bounds on the data. Assume that there
exists $\eta, \varepsilon >0$ such that
\begin{equation}\label{bc_data}
\|g\|_{H^{1/2}(\Sigma)} + \|\psi\|_{H^{-1/2}(\Sigma)} \leq \eta
\end{equation}
and $\|f\|_{\Omega} + \|\bfF\|_{\Omega} \leq \varepsilon$ or
equivalently, the right hand side $l(v)$ satifies the following bound
\begin{equation}\label{bulk_data}
\|l\|_{(V_{\Sigma'})'} \leq \varepsilon
\end{equation}
\begin{thm}\label{locbound}(Conditional stability of the Cauchy problem, local bound)
Assume that $u \in H^1(\Omega)$ is a solution to
\eqref{weak_Cauchy}, with data satisfying \eqref{bc_data} and \eqref{bulk_data}. Assume that the following a priori bound holds
\begin{equation}\label{eq:aprioriloc}
\|u\|_{\Omega} \leq E_0.
\end{equation}
Let $G \subset \Omega$ be such that $\mbox{dist}(G,\Sigma') > 0$. Then there exists a constant
$C>0$ and $\tau \in (0,1)$ depending only on the geometry of
$\Omega$ and $G$ such that
\begin{equation}\label{eq:locbound}
\|u\|_{G} \leq C (\varepsilon + \eta)^{\tau} (E_0 + \varepsilon + \eta)^{(1-\tau)}.
\end{equation}
\end{thm}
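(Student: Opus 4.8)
The plan is to prove \eqref{eq:locbound} by combining an interior \emph{three-ball inequality} with a boundary Cauchy estimate and then propagating smallness from a collar of $\Sigma$ into $G$. The starting point is a quantitative unique continuation inequality for the operator $Lu := \nabla\cdot(A\nabla u)+\mu u$. Writing the equation with source $\rho := f + \nabla\cdot\bfF$ (understood weakly, with $\|\rho\|$ below a shorthand for the dual norm $\|l\|_{(V_{\Sigma'})'}$), I first establish that for three concentric balls $B_{r_1}\subset B_{r_2}\subset B_{r_3}\Subset\Omega$ there is $\theta\in(0,1)$, depending only on the ratios $r_i/r_j$ and on the ellipticity of $A$, with
\begin{equation*}
\|u\|_{B_{r_2}} \le C\bigl(\|u\|_{B_{r_1}}+\|\rho\|_{B_{r_3}}\bigr)^{\theta}\bigl(\|u\|_{B_{r_3}}+\|\rho\|_{B_{r_3}}\bigr)^{1-\theta}.
\end{equation*}
This is the heart of the matter, and I would obtain it from a Carleman estimate for $L$ with a radial weight (equivalently, from the monotonicity of an Almgren-type frequency function in the homogeneous case, the source $\rho$ being carried along as a perturbation). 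The term $\|\rho\|_{B_{r_3}}$ is what the global bound \eqref{bulk_data} will ultimately control by $\varepsilon$.

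Next I would convert the smallness of the Cauchy data into smallness of $u$ in an interior neighbourhood $\omega$ of $\Sigma$. Because both $u|_\Sigma = g$ and $(A\nabla u)\cdot\bfnu|_\Sigma = \psi$ are prescribed, with combined norm bounded by $\eta$ in \eqref{bc_data}, a boundary Carleman estimate (with a weight whose level sets meet $\Sigma$ transversally) yields
\begin{equation*}
\|u\|_{\omega}\le C(\eta+\varepsilon)^{\kappa}(E_0+\eta+\varepsilon)^{1-\kappa}
\end{equation*}
for some $\kappa\in(0,1)$, where the a priori bound \eqref{eq:aprioriloc} supplies the large factor. This boundary step, rather than the interior propagation, is the main obstacle: it requires a Carleman weight adapted to the geometry of $\Sigma$ and a careful treatment of the boundary integrals so that only the prescribed Cauchy data, and not the full trace of $\nabla u$, appears.

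Finally I would propagate the smallness from $\omega$ to all of $G$. Since $\mathrm{dist}(G,\Sigma')>0$, any point of $G$ can be joined to $\omega$ by a path staying in a compact subset $K\Subset\Omega\setminus\Sigma'$, and this path can be covered by a finite chain of overlapping balls $B_{r_1}^{(j)}\subset B_{r_2}^{(j)}\subset B_{r_3}^{(j)}\Subset\Omega$, $j=1,\dots,N$, with $N$ bounded in terms of the geometry of $\Omega$ and $G$. Iterating the three-ball inequality along the chain, each step raising the current estimate to a power $\theta$, composes the exponents into a single $\tau\in(0,1)$ (a product of the $\theta$'s and of $\kappa$), while the factors involving $\|u\|_{B_{r_3}^{(j)}}$ are absorbed into $E_0$ via \eqref{eq:aprioriloc}. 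Collecting the small and large factors and using $\|\rho\|_\Omega\le\varepsilon$ then gives exactly \eqref{eq:locbound}. Because the number of balls is finite and their radii are bounded below, $\tau$ depends only on the geometry of $\Omega$ and $G$, as claimed.
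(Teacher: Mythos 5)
The paper does not prove this theorem at all: it is imported verbatim (in simplified form) from Alessandrini, Rondi, Rossi and Vessella \cite{ARRV09} --- Theorem 1.7 there for the local estimate near $\Sigma$, extended into the interior via the propagation-of-smallness arguments of their Section 5. So there is no in-paper proof to compare against; the authors explicitly defer all proofs to that reference and only remark that the smallness assumption on $\mbox{dist}(G,\Sigma)$ can be dropped at the cost of worsening $C$ and $\tau$, provided $G$ stays away from $\Sigma'$.

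Your outline is a faithful reconstruction of the strategy actually used in \cite{ARRV09}: a three-sphere (three-ball) inequality for $\nabla\cdot(A\nabla u)+\mu u$, a boundary stability estimate converting smallness of the Cauchy pair $(g,\psi)$ on $\Sigma$ into smallness of $u$ in a collar $\omega$, and a finite chain of balls propagating that smallness to $G$ with the H\"older exponents composing multiplicatively into a single $\tau\in(0,1)$. The structure of the final estimate, with the source measured in $(V_{\Sigma'})'$ contributing $\varepsilon$ and the a priori bound \eqref{eq:aprioriloc} supplying the large factor, is also the right one. However, as a self-contained proof the proposal is only a roadmap: the two Carleman estimates (the interior one behind the three-ball inequality and, especially, the boundary one with a weight adapted to $\Sigma$ in which only the prescribed Cauchy data and not the full trace of $\nabla u$ survives) are asserted rather than established, and you yourself identify the boundary step as ``the main obstacle.'' Those two lemmas \emph{are} the theorem --- everything else is bookkeeping --- so the proposal cannot be credited as a proof. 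It is, though, an accurate account of how the cited reference proves the result, and the chaining argument you describe is precisely how the authors justify dropping the requirement that $G$ be close to $\Sigma$.
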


Observe that compared to Theorem 1.7 in \cite{ARRV09},
we have omitted the assumption that $\mbox{dist}(G,\Sigma)$
is small. This is because estimates for $u$ can be propagated in the interior of $\Omega$, at the cost of making the constants $C$ and $\tau$ worse, see Section 5 in \cite{ARRV09}.
It is important, however, that $G$ does not touch $\Sigma'$. If it does, the optimal estimate is of logarithmic type.

\begin{thm}\label{globbound}(Conditional stability of the Cauchy problem, global bound)
Assume that $u \in H^1(\Omega)$ is a solution to
\eqref{weak_Cauchy}, with data satisfying \eqref{bc_data} and \eqref{bulk_data}. Assume that the following a priori bound holds
\begin{equation}\label{eq:aprioriglob}
\|u\|_{H^1(\Omega)} \leq E.
\end{equation}
Then there exists a constant
$C>0$ and $\tau \in (0,1)$ depending only on the geometry of
$\Omega$ such that
\begin{equation}\label{eq:globbound}
\|u\|_{\Omega} \leq C (E + \varepsilon + \eta) \mathop{\omega}\left(\frac{\varepsilon + \eta}{E + \varepsilon + \eta}\right)
\end{equation}
where
\[
\omega(t) \leq \frac{1}{\log(t^{-1})^\tau},\quad \mbox{ for } t<1.
\]
\end{thm}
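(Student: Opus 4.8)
The plan is to derive the global (logarithmic) estimate from the local (Hölder) estimate of Theorem~\ref{locbound} together with the a~priori bound \eqref{eq:aprioriglob}, by splitting $\Omega$ into an interior region, where Theorem~\ref{locbound} is effective, and a thin layer around the data-free part $\Sigma' = \partial\Omega\setminus\Sigma$ of the boundary, whose contribution is controlled directly, and then optimising over the layer width. Concretely, for a parameter $\rho\in(0,\rho_0)$ set
\[
G_\rho := \{x\in\Omega : \mbox{dist}(x,\Sigma') > \rho\},
\]
and write $\|u\|_\Omega \le \|u\|_{G_\rho} + \|u\|_{\Omega\setminus G_\rho}$. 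I will bound the first term by Theorem~\ref{locbound} and the second by an elementary Sobolev argument, the point being that the two bounds force $\rho$ in opposite directions, so that balancing them produces the modulus $\omega$.

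For the interior term I apply Theorem~\ref{locbound} with $G = G_\rho$; since $\mbox{dist}(G_\rho,\Sigma') = \rho > 0$ the hypotheses are met, and because $\|u\|_\Omega\le\|u\|_{H^1(\Omega)}\le E$ I may take $E_0 = E$ in \eqref{eq:aprioriloc}. This yields
\[
\|u\|_{G_\rho} \le C(\rho)\,(\varepsilon+\eta)^{\kappa(\rho)}\,(E+\varepsilon+\eta)^{1-\kappa(\rho)},
\]
where the constant $C(\rho)$ and the Hölder exponent $\kappa(\rho)\in(0,1)$ now depend on $\rho$ through the geometry of $G_\rho$. The crucial quantitative input, which I would import from the propagation-of-smallness machinery of \cite{ARRV09}, is that this exponent degrades only polynomially as the layer shrinks: chaining the three-spheres inequality from the data region to a point at distance $\rho$ from $\Sigma'$ requires $N\sim\log(1/\rho)$ steps, each multiplying the combined exponent by a fixed factor $\theta_0\in(0,1)$, so that $\kappa(\rho)\sim\theta_0^{N}\sim\rho^{\gamma}$ for some $\gamma>0$. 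Writing $E' := E+\varepsilon+\eta$ and $t := (\varepsilon+\eta)/E'$, the interior term then takes the form $C(\rho)\,E'\exp(-\rho^{\gamma}\log(1/t))$.

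For the layer term I use that $|\Omega\setminus G_\rho|\le C\rho$, since it is a tubular neighbourhood of $\Sigma'$, together with the Sobolev embedding $H^1(\Omega)\hookrightarrow L^{q}(\Omega)$ (with $q=2^\ast$ for $d=3$ and any finite $q$ for $d=2$) and Hölder's inequality:
\[
\|u\|_{\Omega\setminus G_\rho} \le \|u\|_{L^q(\Omega)}\,|\Omega\setminus G_\rho|^{\,\frac12-\frac1q} \le C E\,\rho^{\beta}, \qquad \beta:=\tfrac12-\tfrac1q>0.
\]
Combining the two bounds and choosing $\rho = (\log(1/t))^{-a}$ with $a\in(0,1/\gamma)$ makes $\rho^{\gamma}\log(1/t)=(\log(1/t))^{1-a\gamma}\to\infty$, so the interior exponential decays faster than any power of $\log(1/t)$ and is dominated by the layer term $C E\,\rho^{\beta}\le C E'\,(\log(1/t))^{-a\beta}$. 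Setting $\tau := a\beta$, which can be arranged to lie in $(0,1)$, gives exactly
\[
\|u\|_\Omega \le C\,(E+\varepsilon+\eta)\,(\log(t^{-1}))^{-\tau} = C\,(E+\varepsilon+\eta)\,\omega(t),
\]
as claimed; the case $t\ge1$ is trivial, since then $E\lesssim\varepsilon+\eta$ and the right-hand side already dominates $\|u\|_\Omega\le E$.

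I expect the main obstacle to be the second step, namely making the dependence $\kappa(\rho)\gtrsim\rho^{\gamma}$ rigorous, since Theorem~\ref{locbound} as stated conceals the $\rho$-dependence of its constants. This is precisely the hard analytic core — the interior and boundary Carleman or three-spheres estimates and their careful chaining towards $\Sigma'$ — and it is exactly the content of \cite{ARRV09} that I would invoke rather than reprove. The logarithmic, rather than Hölder, character of the global estimate is the direct reflection of this exponent collapsing to zero as $\rho\to0$.
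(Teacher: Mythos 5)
The paper does not prove this theorem at all: Theorems~\ref{locbound} and \ref{globbound} are both imported from \cite{ARRV09} in simplified form, with the text explicitly deferring all proofs to that reference. So there is no in-paper argument to compare yours against; what can be judged is whether your reconstruction is a sound route to \eqref{eq:globbound}, and it is. The decomposition into $G_\rho$ and the tubular layer $\Omega\setminus G_\rho$, the layer bound $\|u\|_{\Omega\setminus G_\rho}\lesssim E\rho^{\beta}$ via Sobolev embedding and H\"older, and the final optimisation $\rho=(\log(1/t))^{-a}$ are all correct and are essentially how the ``stability up to the boundary'' results in \cite{ARRV09} are obtained from the interior H\"older estimates. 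The one genuine analytic gap is exactly the one you flag: Theorem~\ref{locbound} as stated hides the $\rho$-dependence of $C(\rho)$ and $\kappa(\rho)$, and the whole argument hinges on the quantitative inputs $\kappa(\rho)\gtrsim\rho^{\gamma}$ and $C(\rho)$ growing at most polynomially in $1/\rho$ (the latter is needed so that $C(\rho)\sim(\log(1/t))^{aM}$ is still absorbed by the stretched-exponential decay of the interior term --- you should state this explicitly rather than leave $C(\rho)$ uncontrolled). Since the paper itself outsources precisely this propagation-of-smallness machinery to \cite{ARRV09}, invoking it rather than reproving it is consistent with the paper's own level of rigour. Two minor points to tidy up: for $t$ close to $1$ your choice of $\rho$ may exceed $\rho_0$, in which case you should fall back on $\rho=\rho_0$ and note that $\omega(t)$ is then bounded below by a fixed constant, so $\|u\|_\Omega\le E'$ already gives \eqref{eq:globbound}; and note that $t=(\varepsilon+\eta)/(E+\varepsilon+\eta)<1$ automatically, so the ``$t\ge1$'' case never arises.
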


\section{The mixed finite element framework}
Let $\{\mathcal{T}\}_h$ be a family of conforming, quasi uniform meshes
consisting of shape regular simplices $\mathcal{T} = \{K\}$. The index
$h$ is the mesh parameter $h$, defined as the largest diameter of any
element $K$ in $\mathcal{T}$. For each
simplex $K$ we  let $\bfn_K$ be the outward pointing unit normal. We 
assume that the boundary faces of $\mathcal{T}$ fits the zone $\Sigma$ 
so that $\partial \Sigma$ nowhere cuts through a boundary face. The
set of faces of the elements in $\mathcal{T}$ will be denoted by $\mathcal{F}$
and the set of faces in $\mathcal{F}$ whose union coincides with $\Sigma$ 
by $\mathcal{F}_\Sigma$.

We introduce the space of functions in $L^2(\Omega)$ that are piecewise 
polynomial of order $k$ on each element 
\[
X^k_h := \{x_h \in L^2(\Omega): x_h\vert_K \in \mathbb{P}_k(K), \, \forall K \in \mathcal{T}\},
\]
where $\mathbb{P}_k(K)$ denotes the set of polynomials of degree less
than or equal to $k$ on the simplex $K$. We define the $L^2$-projection 
$\pi_{X,k}:L^2(\Omega) \mapsto X_h^k$ by, $\pi_{X,k} y \in X^k_h$ such that
\[
(\pi_{X,k}  y - y,v_h)_\Omega = 0,\quad \forall v_h \in X_h^k.
\]
The
$L^2$-projection on a face $F$ of some simplex $K \in \mathcal{T}$,
will also be used in the analysis. We define $\piF:L^2(F) \mapsto
\mathbb{P}_{l}(F)$ such that, for $\phi \in L^2(F)$, $\piF \phi$ satisfies
\[
\left<\phi- \piF \phi, p_h \right>_F = 0,\quad \forall p_h \in \mathbb{P}_l(F).
\]
For functions in $X^k_h$ we introduce the broken norms,
\begin{equation}\label{eq:discrete_H1}
\|x\|_h := \left(\sum_{K \in \mathcal{T}} \|x\|_K^2\right)^{\frac12}\mbox{
    and } \|x\|_{1,h} := \left(\|\nabla x\|^2_h +
  \|h^{-\frac12}\piF \jump{x}\|^2_{\mathcal{F} \setminus \mathcal{F}_\Sigma} \right)^{\frac12}
\end{equation}
where $\|x\|^2_{\mathcal{F}}:= \sum_{F
  \in \mathcal{F}} \|x\|_F^2$ and
\[
\jump{u}\vert_F(x) := \left\{ \begin{array}{l}
\lim_{\epsilon
  \rightarrow 0^+} (u(x - \epsilon \bfn_F) - u(x + \epsilon \bfn_F))
                             \mbox{ for } F \in \mathcal{F}_i\\
u(x) \mbox{ for } F \in \mathcal{F}_{\Sigma'}
\end{array}
\right. 
\]
where $\bfn_F$ is a fixed unit normal to the face $F$ and $\mathcal{F}_i$ is the set of 
interior faces. Note that we do not need to define the jump on $\Sigma$.
Also recall the discrete Poincar\'e inequality \cite{Brenner03},
\begin{equation*}
\|x\|_{L^2(\Omega)}\lesssim \|x\|_{1,h}, \quad \forall x \in X_h^k,
\end{equation*}
which guarantees that the right expression of \eqref{eq:discrete_H1} is a norm. Here and below 
we use the notation $a \lesssim b$ for $a \leq C b$ where $C$ is a 
constant independent of $h$.
Occasionally, we will also use the notation $a \sim b$ meaning $a \lesssim b$
and $b \lesssim a$. 

To formulate the method we write the standard $H^1$-conforming finite
element space 
\[
L_h^k:=\{v_h \in H^1(\Omega) \cap X^k_h \}.
\]
For the primal variable it is convenient to introduce the spaces
\[
V_{g}^k:=\{v_h \in L_h^k\, : \,v_h = g_h \mbox{ on } \Sigma \}, \quad V_{0}^k:=\{v_h \in L_h^k\, : \,v_h = 0 \mbox{ on } \Sigma \}.
\]
We let $g_h$ denote the nodal interpolant of $g$ on the trace of
functions in $V_h$ on
$\Sigma$, so that
defining the nodal interpolant $i_h:C^0(\bar \Omega) \mapsto
L_h^k$, there holds $i_h:V_\Sigma \mapsto V_{g}^k$. The following
approximation estimate is satisfied by $i_h$, see e.g. \cite{EG04}. For $v \in
H^{k+1}(\Omega)$ there holds
\begin{equation}\label{eq:approx_Lagrange}
\|v - i_h v\|_\Omega + h \|\nabla (v - i_h v)\|_\Omega  \lesssim
h^{k+1} |v|_{H^{k+1}(\Omega)}, \quad k\ge 1.
\end{equation}
The flux variable will be approximated in the Raviart-Thomas space
\[
RT^l:= \{\bfq_h \in H_{div}(\Omega)\, : \, \bfq_h\vert_K \in
\mathbb{P}_l(K)^d\oplus \bfx (\mathbb{P}_l(K) \setminus
\mathbb{P}_{l-1}(K))\mbox{ for all } K \in \mathcal{T} \}
\]
with $\bfx \in \mathbb{R}^d$ being the spatial variable. 
We recall the Raviart-Thomas interpolant
$\bfR_h:H_{div}(\Omega) \mapsto RT^l$ and its approximation
  properties \cite{EG04}.  For $\bfq \in
H^l_{div}(\Omega)$ and $\bfR_h \bfq \in RT^{l}$ there holds
\begin{equation}\label{eq:approx_RT}
\|\bfq  - \bfR_h \bfq\|_\Omega + \|\nabla \cdot (\bfq  - \bfR_h
\bfq)\|_\Omega \lesssim h^{l} (|\nabla \cdot \bfq |_{H^{l}(\Omega)}+|\bfq |_{H^{l}(\Omega)}).
\end{equation}

Then assuming that
the Neumann data $\tilde \psi$ is in $L^2(\Sigma)$ we define the discretized Neumann boundary data by the
$L^2$-projection, for $F \in \Sigma$, $\tilde \psi_h\vert_F := \piF \tilde \psi$.
A space for the flux variable, with the
satisfaction of the Neumann condition built in, takes the form
\[
\begin{array}{l}
D_{\tilde \psi}^l:=\{\bfq_h \in RT^{l} : \bfq_h \cdot \bfnu = \tilde
  \psi_h \mbox{ on } \Sigma \},\quad
D_{0}^l:=\{\bfq_h \in RT^{l} : \bfq_h \cdot \bfnu = 0 \mbox{ on } \Sigma \}.
\end{array}
\]

Given a function $x_h \in X_h^k$ we define a reconstruction  $\bfeta_h(x_h)$ 
of the gradient of $x_h$ in $D_{0}^l$.
By the properties of the Raviart-Thomas element there exists $\bfeta_h(x_h)
\in D_{0}^l$ such that for all $F \in \mathcal{F} \setminus
\mathcal{F}_\Sigma$ 
\begin{equation}\label{eq:reconstruct1}
\left<\bfeta_h(x_h) \cdot \bfn_F,  w_h \right>_F= \left< h_F^{-1} \jump{x_h},
w_h \right>_F, \mbox{ for all } w_h \in \mathbb{P}_{l}(F)
\end{equation}
where $h_F$ is the diameter of $F$, and if $l \ge 1$, for all $K \in \mathcal{T}$,
\begin{equation}\label{eq:reconstruct2}
( \bfeta_h(x_h) ,\bfq_h)_K = -(\nabla x_h,
\bfq_h )_K, \mbox{ for all } \bfq_h \in [\mathbb{P}_{l-1}(K)]^d.
\end{equation}
The stability of $\bfeta_h$ with respect to data is crucial in the
analysis below and we therefore prove it in a proposition.
\begin{proposition}
There exists a unique $\bfeta_h \in D_0^l$ such that
\eqref{eq:reconstruct1}-\eqref{eq:reconstruct2} hold for every face $F \in \mathcal{F} \setminus
\mathcal{F}_\Sigma$
and every element
in the mesh. More over $\bfeta_h$ satisfies the stability estimate
\begin{equation}\label{eq:etastab}
\|\bfeta_h\|_\Omega \leq C_{ds} (\|\pi_{X,l-1} \nabla x_h\|^2_{h} +
\|h^{-\frac12} \piF \jump{x_h}\|^2_{\mathcal{F}\setminus \mathcal{F}_\Sigma})^{\frac12},
\end{equation}
here $C_{ds}>0$ is a constant depending only on the element
shape regularity that will appear in the constant of the stability estimate, see Proposition \ref{infsup} below.
\end{proposition}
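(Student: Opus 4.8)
The plan is to obtain the statement in two stages: existence and uniqueness from the unisolvence of the Raviart--Thomas degrees of freedom, and the bound \eqref{eq:etastab} from a scaling argument in which the prescribed degrees of freedom are substituted into an elementwise norm equivalence.

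For existence and uniqueness I would observe that \eqref{eq:reconstruct1}--\eqref{eq:reconstruct2}, together with membership in $D_0^l$, prescribe exactly the canonical degrees of freedom of $RT^l$. Indeed, a field in $RT^l$ is determined by its normal moments $\langle \bfeta_h\cdot\bfn_F, w_h\rangle_F$ against $\mathbb{P}_l(F)$ on every face $F\in\mathcal{F}$, together with the interior moments $(\bfeta_h,\bfq_h)_K$ against $[\mathbb{P}_{l-1}(K)]^d$ on every element (the latter only when $l\ge 1$). Being in $D_0^l$ sets the normal moments on $\mathcal{F}_\Sigma$ to zero, \eqref{eq:reconstruct1} fixes them on the remaining faces, and \eqref{eq:reconstruct2} fixes the interior moments. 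Since $\jump{x_h}$ is single-valued on each interior face and $\bfn_F$ is a fixed normal, the prescribed normal moment is the same seen from either adjacent element, so the resulting field is $H_{div}$-conforming; unisolvence then gives a unique $\bfeta_h\in D_0^l$.

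For the stability estimate I would argue elementwise and then sum. Fix $K$, map to the reference simplex $\hat K$ via the affine map $F_K$ with Jacobian $J_K$, and transport $\bfeta_h$ by the contravariant Piola transform. On $\hat K$ the map sending a Raviart--Thomas field to the pair (normal traces on the faces, $L^2$-projection $\hat\pi_{l-1}$ onto $[\mathbb{P}_{l-1}(\hat K)]^d$) is injective by unisolvence, so on this finite-dimensional space
\[
\|\hat{\bfeta}\|_{\hat K}^2 \lesssim \sum_{\hat F\subset\partial\hat K}\|\hat{\bfeta}\cdot\hat{\bfn}\|_{\hat F}^2 + \|\hat\pi_{l-1}\hat{\bfeta}\|_{\hat K}^2 .
\]
Transporting back with the standard Piola scalings ($|\det J_K|\sim h^d$, $\|J_K\|\sim h$, $\|J_K^{-1}\|\sim h^{-1}$ for a shape-regular, quasi-uniform mesh), the face contribution becomes $h_K\|\bfeta_h\cdot\bfn_F\|_F^2$; since $\bfeta_h\cdot\bfn_F\in\mathbb{P}_l(F)$, \eqref{eq:reconstruct1} yields $\bfeta_h\cdot\bfn_F = h_F^{-1}\piF\jump{x_h}$, so this equals $h_K h_F^{-2}\|\piF\jump{x_h}\|_F^2\sim h^{-1}\|\piF\jump{x_h}\|_F^2$, and it vanishes on $\mathcal{F}_\Sigma$.

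The interior term is the delicate step, because the Piola transform does not commute with $\hat\pi_{l-1}$. Here I would test the interior moments $(\hat{\bfeta},\hat{\bfq})_{\hat K}$ by pulling them back to $K$, producing (up to sign) $(\bfeta_h, J_K^{-T}\bfq)_K$ with $J_K^{-T}\bfq\in[\mathbb{P}_{l-1}(K)]^d$; applying \eqref{eq:reconstruct2} and inserting $\pi_{X,l-1}$ gives $-(\pi_{X,l-1}\nabla x_h, J_K^{-T}\bfq)_K$. Estimating this by Cauchy--Schwarz and tracking the powers of $h$ (the $\|J_K^{-T}\|\sim h^{-1}$ factor being compensated by the volume and Piola factors) shows this term contributes exactly $\|\pi_{X,l-1}\nabla x_h\|_K^2$, with no surviving power of $h$; for $l=0$ it is simply absent. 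Summing the resulting elementwise bound over $K$, using that each face is shared by at most two elements, then gives \eqref{eq:etastab}. I expect the main obstacle to be precisely this bookkeeping of $h$-powers in the interior term, where one must verify that the mismatch between the reference projection and the physical quantity $\pi_{X,l-1}\nabla x_h$ produces only a shape-regularity-dependent constant $C_{ds}$ and no spurious scaling.
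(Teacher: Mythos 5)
Your proposal is correct and follows essentially the same route as the paper: existence and uniqueness from unisolvence of the Raviart--Thomas degrees of freedom, and the stability bound from a reference-element norm equivalence combined with Piola scaling, with the interior moments handled exactly as the paper does (via the invariance of the pairing against $[\mathbb{P}_{l-1}]^d$ under the contravariant transform, which resolves the non-commutation of the projection). Your $h$-power bookkeeping ($h\|\bfeta_h\cdot\bfn_F\|_F^2$ for the face term, no surviving power for the interior term) matches the paper's computation.
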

\begin{proof}
The unique existence of $\bfeta_h$ is an immediate consequence of the definition and unisolvence of the Raviart-Thomas space. Observe that the left hand side of equations \eqref{eq:reconstruct1}-\eqref{eq:reconstruct2}  coincides exactly with the degrees of freedom 
defining the Raviart-Thomas element.

For the stability estimate (\ref{eq:etastab}) we notice that since by definition
$\pi_{K,l-1} \bfeta_h\vert_K = \pi_{K,l-1} \nabla x_h\vert_K$ and
$\bfeta_h \cdot \bfn_K \vert_{\partial K} = h_F^{-1} \piF \jump{x_h}\vert_K$ it is 
enough to prove the estimate
\[
\|\bfeta_h\|^2_K \lesssim \|\pi_{K,l-1} \bfeta_h\|^2_K + \hK \|\bfeta_h \cdot \bfn_K\|^2_{\partial K}.
\]
To this end let $\hat K$ be a fixed reference element. Then we have the bound
\begin{equation}\label{eq:stab-ref-element}
\|\hat \bfeta_h\|^2_{\hat K} 
\lesssim 
\|\pi_{\hat K,l-1} \hat \bfeta_h \|^2_{\hat K} 
+ \| \hat \bfeta_h \cdot \hat \bfn_{\hat K} \|^2_{\partial \hat K}
\end{equation}
by finite dimensionality and unisolvence of the Raviart-Thomas element. 

Next let $\Phi(\hat \bfx) = \bfb + B \hat \bfx$ be an affine mapping such that 
$\Phi:\hat K \rightarrow K$ is a bijection and the determinant $|B|$ of $B$ is positive. 
Define the mappings $w = \hat w \circ \Phi^{-1}$ and  $\bfq = |B|^{-1} B \hat \bfq \circ \Phi^{-1}$. 
Then we have identities $(\bfq \cdot \bfn_K, w)_{\partial K} = (\hat \bfq \cdot \hat \bfn_{\hat K},\hat w)_{\partial \hat K}$, 
$( \nabla \cdot \bfq, w)_{K } = (\hat \nabla \cdot \hat \bfq, \hat w)_{\hat K }$, 
and $(\nabla w,\bfq)_K = (\hat \nabla \hat w, \hat \bfq)_{\hat K}$. Since $B$ is a constant 
matrix it follows that  $\bfq \in [\mathbb{P}_{l-1}(K)]^d  \Longleftrightarrow \hat \bfq  \in [\mathbb{P}_{l-1}(\hat K)]^d$,  and thus for $\bfq \in [\mathbb{P}_{l-1}(K)]^d$ we have 
$$(\pi_{\hat K,l-1} \hat{\bfeta}_h,\hat \bfq)_{\hat{K}} 
= 
 (\hat \nabla \hat w, \hat \bfq)_{\hat K} = (\nabla w, \bfq )_{K} 
 =  
 (\pi_{K,l-1} \nabla w,\bfq)_{K} = (\pi_{K,l-1} \bfeta, \bfq)_K.$$
Furthermore, we have the estimates  
\begin{equation}\label{eq:stab-element}
\| \bfq \|^2_K \lesssim |B|^{-1} \| B \|^2 \| \hat \bfq \|^2_{\hat K }, 
\qquad 
 \| \hat \bfq \|^2_{\hat K }  \lesssim  |B|  \| B^{-1} \|^2 \| \bfq \|^2_K.
\end{equation}
Moreover, denoting by $(\Phi|_{\hat F})'$ the derivative of the restriction of $\Phi$ on a face $\hat F$ of the reference element $\hat K$, and by 
$|B|_{\partial K}$ the maximum of $|(\Phi|_{\hat F})'|$
over all the faces $\hat F$, 
it holds that 
\begin{equation}\label{eq:stab-normal-trace}
\| \hat \bfq \cdot \hat \bfn_{\hat K} \|^2_{\partial \hat K} 
\lesssim  |B|_{\partial K} \| \bfq \cdot \bfn_K \|^2_{\partial K}.
\end{equation}
To verify 
(\ref{eq:stab-normal-trace}) we note that 
\begin{equation*}
\sup_{\hat w\in L^2(\hat K)} \frac{(\hat \bfq \cdot \hat \bfn_K,\hat
  w)_{\partial \hat K}}{\| \hat w \|_{\partial \hat K}}
= 
\sup_{w\in L^2(K)} \frac{( \bfq \cdot  \bfn_K,w)_{\partial K}}{\| w \|_{\partial K}} 
\frac{\| w \|_{\partial K}}{\| \hat w \|_{\partial \hat K}}
\lesssim 
\max_{\hat F} |(\Phi|_{\hat F})'| \|  \bfq \cdot  \bfn_K \|_{\partial K}.
\end{equation*}

Recall that we have assumed that the meshes are quasi uniform. In particular, they are shape regular, and therefore the diameter $\rho_K$ of the largest ball in $K$
satisfies $\rho_K \sim h$. The projection of this ball onto the plane containing a face $F$ of $K$ is a $(d-1)$-dimensional ball of the same radius $\rho_K$. But this ball is contained in $F$, and therefore the volume of $F$ is proportional to $h^{d-1}$. This again implies that $|B|_{\partial K} \sim h^{d-1}$. Also,
\begin{equation*}
\| B \| \lesssim h,\qquad \| B^{-1} \|\lesssim h^{-1},\qquad |B| \sim h^{d}.
\end{equation*}
Finally, using (\ref{eq:stab-ref-element}), (\ref{eq:stab-element}), and (\ref{eq:stab-normal-trace}) and the above geometric bounds we obtain
\begin{align*}
\| \bfeta_h \|^2_K &\lesssim  |B|^{-1} \|B \|^2 \|\hat \bfeta_h\|^2_{\hat K}
\\
&\lesssim 
 |B|^{-1} \|B \|^2 ( \|\pi_{\hat K,l-1} \hat \bfeta_h\|^2_{\hat K} + \|\hat \bfeta_h \cdot \hat{\bfn}_K\|^2_{\partial \hat K} ) 
\\
&\lesssim 
 |B|^{-1} \|B \|^2 (
|B| \| B^{-1}\|^2 \|\pi_{K,l-1} \bfeta_h \|^2_{K} 
+ |B|_{\partial K} \|\bfeta_h \cdot {\bfn}_K \|^2_{\partial K} 
)
\\
&\lesssim 
\|\pi_{K,l-1} \bfeta_h \|^2_{K} 
+ h \|\bfeta_h \cdot {\bfn}_K \|^2_{\partial K}.
\end{align*}  
\end{proof}

To measure the effect of the perturbed data we introduce the corrector
function $\delta \bfp \in D_{\delta \psi}^l$,
\begin{equation}\label{eq:lift}
\left<\delta \bfp\cdot \bfn_F , p_h\right>_F = \left\{ \begin{array}{ll}
 \left<\delta \psi,p_h\right>_F& \mbox{ for all } p_h \in
 \mathbb{P}_l(F) \mbox{ for } F \in \mathcal{F}_\Sigma\\
0 & \mbox{ for all } p_h \in
 \mathbb{P}_l(F) \mbox{ for } F \in \mathcal{F} \setminus\mathcal{F}_\Sigma
\end{array} \right.
\end{equation}
and if $k \ge 1$, for any $K \in \mathcal{T}$,
$
(\delta \bfp, \bfq_h)_K = 0, \mbox{ for all } \bfq_h \in [\mathbb{P}_{l-1}(K)]^d.
$
For $\delta \bfp$ we may also show the bound
$
\|\delta \bfp\|_\Omega \lesssim h^{\frac12} \|\delta \psi\|_\Sigma.
$

We will frequently use the following inverse and trace
inequalities, for all $v \in \mathbb{P}_k(K)$,
\begin{equation}\label{eq:inverse}
\|\nabla v\|_K \lesssim h^{-1} \|v\|_K, 
\end{equation}
and for all $v \in H^1(K)$
\begin{equation}\label{eq:trace}
\|v\|_{\partial K} \lesssim h^{-\frac12} \|v\|_K + h^{\frac12}
\|\nabla v\|_K.
\end{equation}
For a proof of \eqref{eq:inverse} we refer to Ciarlet \cite{Ciarlet78}, and
for \eqref{eq:trace} see for instance \cite{MS99}.

\subsection{Deriving finite element methods in an optimization framework}
The method to solve ill-posed problems proposed in
\cite{Bu16} is based on discretization in an optimization framework
where some quantity is minimized under the constraint of the partial
differential equation. The quantity to be minimized is typically
either some least squares fit of data or some weakly consistent
regularization term acting on the discrete space, or both. Introducing
the Lagrange multiplier space $W^m := X_h^m$, this
problem then takes the form of finding the critical point of a
Lagrangian
$\mathcal{L}: V_{g}^k \times D_{\tilde \psi}^l \times W^m \to \mathbb{R}$ defined by
\begin{equation}\label{eq:lagrangian}
\mathcal{L}[v_h,\bfq_h,y_h]:= \frac12 s[(v_h,\bfq_h),(v_h,\bfq_h)] -
\frac12 s^*(y_h,y_h) + b(\bfq_h, v_h,y_h)-(\tilde f,y_h)_\Omega.
\end{equation}
Here $y_h \in W^m$ is the Lagrange multiplier, $s(\cdot,\cdot)$ denotes the primal stabilizer,
$s^*(\cdot,\cdot)$ the dual stabilizer and $b(\cdot,\cdot)$ the
bilinear form defining the partial differential equation, in our case
the conservation law
\[
b(\bfq_h,v_h,y_h):= (\nabla \cdot \bfq_h + \mu v_h, y_h)_\Omega.
\]
As a first step to ensure that the kernel of the
system is trivial we propose the primal stabilizer
\begin{equation}\label{eq:primal_stab}
s[(v,\bfq),(v,\bfq)]:=  \frac12\|A \nabla v -
\bfq\|_\Omega^2 + t(v,v) 
\end{equation}
where $t(\cdot,\cdot)$ is a symmetric positive semi-definite form
related to Tikhonov regularization. However here we will design $t$ so
that it is weakly consistent to the right order.
This should be compared with the jump of the gradient used in
\cite{Bu13}. Observe that in this case the first term of $s$ forces
$\bfp_h$ and $A\nabla u_h$ to be close, connecting the flux variable to the primal variable.
In that way introducing an
effect similar to the penalty on the gradient of \cite{Bu13}. 

Computing the Euler-Lagrange equations of \eqref{eq:lagrangian}
we obtain the following linear system. Find $u_h, \bfp_h, z_h \in  V_{g}^k \times D_{\tilde \psi}^l \times W^m$ such that
\begin{align}\label{eq:EL_1}
s[(u_h,\bfp_h),(v_h,\bfq_h)]+b(\bfq_h, v_h,z_h) & = 0 \\
b(\bfp_h, u_h,w_h) - (\tilde f,w_h)_\Omega  - s^*(z_h,w_h)& = 0\label{eq:EL_2}
\end{align}
for all $v_h, \bfq_h ,w_h \in  V_{0}^k \times D_{0}^l \times W^m$.
The system \eqref{eq:EL_1}-\eqref{eq:EL_2} is of the
same form as that proposed in \cite{Bu14b,Bu16}.
To ensure that the system is well-posed, the spaces $V_{g}^k \times
D_{\tilde \psi}^l \times W^m$ and the stabilizations $t$ and $s^*$
must be carefully balanced.  If we restrict the discussion to $k-1
\leq l \leq k$ and $l \leq m \leq k$, $k \ge 1$, a stable system is
obtained by choosing
\begin{equation}
\begin{cases}
\label{eq:tikhonov}
t(v_h,v_h) := \frac12\mu^2h^2 \|(1 - \pi_W) v_h\|_\Omega^2 + \gamma_T h^{2k} \|\nabla
v_h\|_{\Omega}^2, 
\\ 
s^*(y_h,y_h) :=  \gamma^* \frac12\|(1 - \pi_{X,l-1}) \nabla y_h\|^2_{\Omega},
\end{cases}
\end{equation}
where $\pi_W:L^2(\Omega) \mapsto W^m$ denotes the standard $L^2$-projections on $W^m$. We also define $\pi_{X,-1} \equiv 0$.
Alternatively for any choice of $k,l,m$ one may use the regularizing terms:
\begin{equation}\label{eq:L2LS}
\begin{cases}
t(v_h,v_h) := \frac12\gamma_T h^{2k} \|\nabla v_h\|_{\Omega}^2, 
\\
s^*(y_h,y_h) := \frac12\|y_h\|_\Omega^2.
\end{cases}
\end{equation}

For
simplicity we limit the discussion to $H^1$ conforming approximation
of the primal variable, i.e. $V^k_h \subset H^1(\Omega)$. The extension to non-conforming methods is
immediate, for instance by adding a penalty on the solution jump over
element faces and a penalty on the solution on the boundary $\Sigma$.
The contribution $\|h^{-{\frac12}} \jump{u_h}\|_{\mathcal{F}_i\cup
  \mathcal{F}_\Sigma}^2$ is then added to \eqref{eq:primal_stab}
. Such alternative methods can then be
analysed using the arguments below together with elements of
\cite{Bur17a,Bur17b}. We leave the details to the reader. 

We end this section by detailing some different choices of polynomial
orders for the spaces and associated stabilizers $s,\,s^*$, that result in stable and optimally
convergent methods.
\subsection{Inf-sup stable finite element
  formulation}
If for fixed $k \ge 1$ we take $l = m =k$ then the primal-dual method is
stable with minimal stabilization. It is obvious that for this choice
the first term of $t$ is always zero as well as $s^*$. Considering equation \eqref{eq:EL_2} we see that for every cell
$K \in \mathcal T_h$ we have by taking $w_h = \chi_K$, with $\chi$
denoting the characteristic function, 
\begin{equation}\label{eq:loc_cons}
\int_{\partial K} \bfp_h \cdot \bfn_K ~ \mbox{d}s = \int_K (f - \mu u_h) ~\mbox{d}x
\end{equation}
expressing the cell-wise satisfaction of the conservation law. This
method however has a very large number of degrees of freedom and it is
not obvious how to eliminate the Lagrange multiplier in order to
reduce the size of the system. Moreover the spaces are not matched
with respect to accuracy, optimal estimates are obtained also if
$l=k-1$.
\subsection{Well balanced method with local $H^1$ dual stabilizer}\label{sec:wellbalanced}
For fixed $k$ take $l=k-1$ and $m=k$, then as we shall see below, the
primal and dual spaces are well balanced in the sense that they
produce the same order of approximation error $O(h^k)$ for a
sufficiently smooth solution. Since $V^k_g \subset X_h^m$ the first term
of $t$ in \eqref{eq:tikhonov} is zero. On the other hand with this
choice of spaces the
method is not inf-sup stable for $s^*\equiv 0$. The dual stabilizer in
\eqref{eq:tikhonov} is however completely local to each element. 
In the case $l=0$, the dual
stabilizer \eqref{eq:tikhonov}, becomes
\[
s^*(y_h,y_h) := \frac12\sum_{K \in \mathcal{T}} \|\nabla y_h\|_{K}^2.
\]
Since $s^*$ is zero for constant functions the relation
\eqref{eq:loc_cons} still holds. Thanks to the local character, all the
degrees of freedom of the Lagrange multiplier, except the cell-wise
average value, can be eliminated from the system using static
condensation. 

For the choice $l=k-1$ and $m=k-1$, we take $t$ defined by
\eqref{eq:tikhonov} and $s^*\equiv 0$, which results
in an inf-sup stable well balanced method. If $\mu=0$, the first term
of $t$ can be omitted, i.e. $t(v_h,v_h):=\frac12\gamma_T h^{2k} \|\nabla v_h\|_{\Omega}^2$.  This method can
easily be analysed using the approach below and has similar
convergence order as the previous well-balanced method.
\subsection{mixed $L^2$-least squares finite element formulation}
The choice of spaces and stabilizers proposed above lead to methods
that have optimal convergence properties up to physical stability and that 
satisfy the conservation law exactly on each cell. These properties
however come at a price: the number of degrees of freedom is
large. Indeed compared to the method introduced in \cite{Bu13}, using
piecewise affine conforming approximation for both the primal and dual
variable the number of degrees of freedom increases at least by a
factor of three if this formulation is used. This large increase can be
reduced to a factor of two by using the dual stabilizer \eqref{eq:L2LS}
as we shall see below, but the price is that local
conservation only holds weakly.

If we define $s^*(z_h,w_h):= (z_h,w_h)_\Omega$ we immediately get from \eqref{eq:EL_2} that $z_h = \nabla
\cdot \bfp_h + \mu u_h - f_h$ where $f_h$ is the $L^2$-projection of $f$
onto $W^m$. Reinjecting this expression for $z_h$ into \eqref{eq:EL_1}
and defining $s$ by \eqref{eq:primal_stab}, with 
$t$ as in
\eqref{eq:L2LS},
we obtain the equation: find $(u_h,\bfp_h) \in V_g^k \times D_{\tilde\psi}^l$ such that 
\begin{equation}\label{eq:L2_LS}
s[(u_h,\bfp_h),(v_h,\bfq_h)]+(\mu u_h+\nabla \cdot \bfp_h,\mu v_h+\nabla
\cdot \bfq_h)_\Omega  = (\tilde f, \nabla
\cdot \bfq_h+ \mu v_h)_\Omega
,
\end{equation}
for all $(v_h,\bfq_h) \in V_0^k \times D_0^l$.
This method, which coincides with the one proposed in \cite{DHH13} up to
Tikhonov regularization, can be derived directly from the minimization of
the following functional 
$\mathcal{J}_h:V_{g}^k \times D_{\tilde \psi}^l \to \mathbb{R}$,
\begin{equation*}
\mathcal{J}_h(u_h,\bfp_h) := s[(u_h,\bfp_h),(u_h,\bfp_h)] +
\int_\Omega
(\nabla \cdot \bfp_h + \mu u_h- \tilde f)^2 ~\mbox{d}x.
\end{equation*}
There is one Tikhonov regularization term
added in $s$, where the parameter $\gamma_T$ is independent of the mesh
size. Our discrete method can now be written: find $(u_h, \bfp_h) \in
V_{g}^k \times D_{\tilde \psi}^l$ such that
\begin{equation}\label{L2discmin}
(u_h,\bfp_h) = \argmin_{ 
V_{g}^k \times D_{\tilde \psi}^l} \, \mathcal{J}_h(u_h,\bfp_h).
\end{equation}
We conclude that the solution of \eqref{eq:EL_1}-\eqref{eq:EL_2}
coincides with the minimizer of \eqref{L2discmin} for the dual
stabilizer of the left relation of \eqref{eq:L2LS}. Compared
to the method proposed in \cite{DHH13} the regularization has been
reduced to only one term. Indeed this term is all that we need to
prove optimal error estimates and its only role is to ensure a
uniform apriori estimate on the discrete solution in $H^1$. 
As we shall see below, an 
iteration based on the method \eqref{eq:L2_LS} can be used to solve one of the previous, larger systems, thus recovering the local conservation and optimal error estimates.
\subsection{Stability and continuity of the forms}
For the analysis we introduce norms on
$V_{\Sigma} \times H_{div}(\Omega)$,
\begin{align*}
\tnorm{(v,\bfq)}_{-\zeta} &:= \left(
s[(v,\bfq),(v,\bfq)] + \|h^{\zeta}(\nabla \cdot
\bfq+ \mu v)\|_\Omega^2 \right)^{\frac12},
\\
\tnorm{(v,\bfq)}_{\sharp} &:=\tnorm{(v,\bfq)}_{-\zeta}+\mu\|v\|_\Omega+\|h^{\frac12} \bfq\|_{\mathcal{F}}+\|\bfq\|_{\Omega},
\end{align*}
where, depending on the choice of the spaces and stabilizers, either $\zeta = 0$ or $\zeta = 1$.
Using the approximation properties \eqref{eq:approx_Lagrange},
\eqref{eq:approx_RT} and the trace inequality \eqref{eq:trace} it is
straightforward to prove the following approximation result for the
triple norms, we omit the details,
\begin{equation}\label{eq:tnorm_approx}
\tnorm{(v - i_h
  v,\bfq - \bfR_h \bfq_h)}_{\sharp} \lesssim h^k \|u\|_{H^{k+1}(\Omega)}
+ h^{l+1} (|\bfq|_{H^{l+1}(\Omega)}+|\nabla \cdot \bfq|_{H^{l+1-\zeta}(\Omega)}).
\end{equation}
The system \eqref{eq:EL_1}-\eqref{eq:EL_2} can be written on the compact form:
 find $(u_h,\bfp_h,z_h) \in V_{g}^k \times
D_{\tilde \psi}^l \times W^m$ such that
\begin{equation}\label{eq:compact}
\mathcal{A}[(u_h,\bfp_h,z_h),(v_h,\bfq_h,w_h)] = l(w_h),\quad  \forall
(v_h,\bfq_h,y_h) \in V_{0}^k \times D_{0}^l \times W^m,
\end{equation}
where
$$
\mathcal{A}[(u_h,\bfp_h,z_h),(v_h,\bfq_h,y_h)]:= b(\bfq_h, v_h,z_h)+
b(\bfp_h, u_h,y_h)
-s^*(z_h,y_h)+s[(u_h,\bfp_h),(v_h,\bfq_h)],
$$
with $s$ and $s^*$ given by \eqref{eq:primal_stab} and
\eqref{eq:tikhonov}, and the right hand side given by
\[
 l(w_h) := (\tilde f, w_h)_\Omega.
\]
We will also
use the following compact notation for the reduced method \eqref{eq:L2_LS}: find $(u_h,\bfp_h) \in V_g^k \times
D_{\tilde \psi}^l$ such that
\begin{equation}\label{eq:compact_red}
{\mathcal{A}}_R[(u_h,\bfp_h,z_h),(v_h,\bfq_h,x_h)] = l_R(\bfq_h),\quad  \forall (v_h,\bfq_h) \in V_{0}^k \times D_{0}^l,
\end{equation}
where, 
\begin{equation}\label{eq:def_AR}
{\mathcal{A}}_R[(u_h,\bfp_h),(v_h,\bfq_h)]:=
(\nabla \cdot \bfp_h+ \mu u_h, \nabla\cdot \bfq_h+ \mu v_h )_\Omega
+ s[(u_h,\bfp_h),(v_h,\bfq_h)],
\end{equation}
with $s$ defined by \eqref{eq:primal_stab} and \eqref{eq:L2LS},
and
\[
 l_R(\bfq_h,v_h) := (\tilde f, \nabla \cdot \bfq_h+ \mu v_h)_\Omega.
\]
Observe that for the exact solution $(u,\bfp)$ there
holds
\begin{equation}\label{eq:consist_full}
\mathcal{A}[(u,\bfp,0),(v_h,\bfq_h,w_h)] = l(w_h) - (\delta f,
w_h)_\Omega + t(u,v_h)
\end{equation}
and, similarly for the reduced method,
\begin{equation*}
\mathcal{A}_R[(u,\bfp),(v_h,\bfq_h)] = l_R(\bfq_h,v_h) - (\delta f, \nabla \cdot
\bfq_h+ \mu v_h)_\Omega + t(u,v_h).
\end{equation*}
 We will
now prove a stability result that is the cornerstone of both the
methods. The method \eqref{eq:compact} requires
an inf-sup argument and the symmetric method \eqref{eq:compact_red} is coercive.
\begin{prop}\label{infsup}
For the formulation \eqref{eq:compact} with $k-1\leq l \leq m$ and $l
\leq m \leq k$ and, when $l<m$,  $\gamma^*>0$ small enough,  there
exists $\alpha>0$ such that for all $v_h,\bfq_h,x_h \in V_{0}^k \times D^l_{0} \times W^m$ there
exists $w_h,\bfy_h,r_h \in V_{0}^k \times D^l_{0} \times
W^m$ such that
\begin{equation}\label{eq:is1}
\alpha(\tnorm{(v_h,\bfq_h)}_{-1}^2 + \|x_h\|_{1,h}^2) \leq \mathcal{A}[(v_h,\bfq_h,x_h),(w_h,\bfy_h,r_h)]
\end{equation}
and
\begin{equation}\label{eq:is2}
\tnorm{(w_h,\bfy_h)}_{-1} + \|r_h\|_{1,h} \lesssim \tnorm{(v_h,\bfq_h)}_{-1} + \|x_h\|_{1,h}.
\end{equation}
For the reduced method \eqref{eq:compact_red} the following coercivity holds. For all $(v,\bfq) \in H^1(\Omega) \times
H_{div}(\Omega)$
\begin{equation}\label{eq:coerce}
\tnorm{(v,\bfq)}_0^2 = \mathcal{A}_R[(v,\bfq),(v,\bfq)].
\end{equation}
\end{prop}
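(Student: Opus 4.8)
\emph{Coercivity \eqref{eq:coerce}.} This is immediate and needs no inequalities: since $s$ is symmetric positive semidefinite and $\mathcal{A}_R$ in \eqref{eq:def_AR} is its augmentation by the symmetric positive term $(\nabla\cdot\bfq+\mu v,\nabla\cdot\bfq+\mu v)_\Omega$, testing against $(v,\bfq)$ reproduces $s[(v,\bfq),(v,\bfq)]+\|\nabla\cdot\bfq+\mu v\|_\Omega^2$, which is exactly $\tnorm{(v,\bfq)}_0^2$ by the definition of the triple norm with $\zeta=0$.

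\emph{Inf-sup \eqref{eq:is1}-\eqref{eq:is2}.} The plan is to assemble the test function from three pieces, each recovering one block of the left-hand norm, and then to fix the weights so that the cross terms are absorbed. Writing $R:=\nabla\cdot\bfq_h+\mu v_h$ and $R_0:=\nabla\cdot\bfq_h+\mu\pi_W v_h\in W^m$ (note $\nabla\cdot\bfq_h\in X_h^l\subseteq W^m$ since $l\le m$), I would take
\[
(w_h,\bfy_h,r_h)=(v_h,\bfq_h,-x_h)+\delta_1(0,\bfeta_h(x_h),0)+\delta_2(0,0,h^2R_0),
\]
with $\delta_1,\delta_2>0$ small, chosen at the end. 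The first piece is the diagonal test: the choice $-x_h$ in the multiplier slot makes the two $b$-terms cancel, leaving
\[
\mathcal{A}[(v_h,\bfq_h,x_h),(v_h,\bfq_h,-x_h)]=s[(v_h,\bfq_h),(v_h,\bfq_h)]+s^*(x_h,x_h),
\]
which already controls the $s$-block of $\tnorm{(v_h,\bfq_h)}_{-1}^2$ and, through $s^*(x_h,x_h)=\tfrac12\gamma^*\|(1-\pi_{X,l-1})\nabla x_h\|_\Omega^2$, the high-gradient part of $\|x_h\|_{1,h}^2$.

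The heart of the argument is the second piece, which recovers the low-gradient and jump parts of $\|x_h\|_{1,h}^2$ via the reconstruction $\bfeta_h(x_h)$. Here I would compute $\mathcal{A}[(v_h,\bfq_h,x_h),(0,\bfeta_h,0)]=-\tfrac12(A\nabla v_h-\bfq_h,\bfeta_h)_\Omega+(\nabla\cdot\bfeta_h,x_h)_\Omega$ and integrate the second term by parts element by element. Since $\bfeta_h\in D_0^l\subset H_{div}$ its normal trace is single valued across interior faces and vanishes on $\Sigma$, so the boundary contributions assemble into $\sum_{F\in\mathcal{F}\setminus\mathcal{F}_\Sigma}\langle\bfeta_h\cdot\bfn_F,\jump{x_h}\rangle_F$; inserting the defining relations \eqref{eq:reconstruct1}-\eqref{eq:reconstruct2} then turns the whole term into $\|\pi_{X,l-1}\nabla x_h\|_h^2+\|h^{-\frac12}\piF\jump{x_h}\|_{\mathcal{F}\setminus\mathcal{F}_\Sigma}^2$ plus a remainder $-\sum_K(\bfeta_h,(1-\pi_{X,l-1})\nabla x_h)_K$. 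The third piece recovers the residual block $\|hR\|_\Omega^2$: testing against $(0,0,h^2R_0)$ yields $\|hR_0\|_\Omega^2+h^2\mu((1-\pi_W)v_h,R_0)_\Omega-s^*(x_h,h^2R_0)$, and because $\|hR\|_\Omega^2\lesssim\|hR_0\|_\Omega^2+h^2\mu^2\|(1-\pi_W)v_h\|_\Omega^2$ with the last term bounded by $2t(v_h,v_h)\le 2s[(v_h,\bfq_h),(v_h,\bfq_h)]$, control of $\|hR_0\|_\Omega^2$ and of the $s$-block gives control of $\|hR\|_\Omega^2$. This is exactly where the first term of the Tikhonov form $t$ in \eqref{eq:tikhonov} is needed, to absorb the primal modes $(1-\pi_W)v_h$ that $W^m$ cannot see when $m<k$.

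It then remains to absorb the cross terms by Young's inequality with weights. I would bound $(A\nabla v_h-\bfq_h,\bfeta_h)$ using $\|A\nabla v_h-\bfq_h\|_\Omega^2\le 2s[(v_h,\bfq_h),(v_h,\bfq_h)]$ together with the stability estimate \eqref{eq:etastab} for $\|\bfeta_h\|_\Omega$; the remainder $(\bfeta_h,(1-\pi_{X,l-1})\nabla x_h)$ against \eqref{eq:etastab} and $s^*(x_h,x_h)$; and the dual term $s^*(x_h,h^2R_0)$ against $s^*(x_h,x_h)$ and $\|hR_0\|_\Omega^2$, using the inverse inequality \eqref{eq:inverse} to get $s^*(h^2R_0,h^2R_0)\lesssim\gamma^*\|hR_0\|_\Omega^2$. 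Choosing $\delta_1,\delta_2$ small enough — and, when $l<m$, $\gamma^*$ small enough so that this last dual-stabilizer cross term stays subordinate — leaves a fixed fraction of $s+s^*+\delta_1(\|\pi_{X,l-1}\nabla x_h\|_h^2+\|h^{-\frac12}\piF\jump{x_h}\|_{\mathcal{F}\setminus\mathcal{F}_\Sigma}^2)+\delta_2\|hR_0\|_\Omega^2$, which is bounded below by $\alpha(\tnorm{(v_h,\bfq_h)}_{-1}^2+\|x_h\|_{1,h}^2)$; the continuity bound \eqref{eq:is2} follows from the triangle inequality together with $\tnorm{(0,\bfeta_h)}_{-1}\lesssim\|\bfeta_h\|_\Omega\lesssim\|x_h\|_{1,h}$ and $\|h^2R_0\|_{1,h}\lesssim\|hR_0\|_\Omega\lesssim\tnorm{(v_h,\bfq_h)}_{-1}$, both via inverse inequalities. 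I expect the genuinely delicate step to be the integration-by-parts identity converting $(\nabla\cdot\bfeta_h,x_h)$ into the two $\|x_h\|_{1,h}$-blocks plus a controllable remainder: it is the only place where the multiplier is recovered, and it hinges on matching the reconstruction's degrees of freedom \eqref{eq:reconstruct1}-\eqref{eq:reconstruct2} with the projected jumps $\piF\jump{x_h}$ and the projected gradient $\pi_{X,l-1}\nabla x_h$.
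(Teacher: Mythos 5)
Your proposal is correct and follows essentially the same route as the paper's proof: the test function $(v_h,\bfq_h+\delta_1\bfeta_h(x_h),-x_h+\delta_2 h^2R_0)$ coincides (up to the harmless extra weight $\delta_2$) with the paper's choice $(v_h,\bfq_h+\epsilon\bfeta_h,-x_h+\xi_h)$, and the three mechanisms you identify — the $h^2R_0$ shift with the Tikhonov term absorbing $(1-\pi_W)v_h$, the elementwise integration by parts matching $\bfeta_h$'s degrees of freedom against $\pi_{X,l-1}\nabla x_h$ and $\piF\jump{x_h}$ with the remainder absorbed by $s^*$ and \eqref{eq:etastab}, and the smallness of $\gamma^*$ to keep the dual-stabilizer cross term subordinate — are exactly the paper's. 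The continuity bound \eqref{eq:is2} is also argued identically via inverse inequalities.
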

\begin{proof}
The relation \eqref{eq:coerce} is immediate by the definition of
$\mathcal{A}_R$ \eqref{eq:def_AR}. We now consider the first claim.
Let $\xi_h :=h^2 (\nabla \cdot \bfq_h + \mu \pi_W v_h) \in W^m$. Then
\begin{align}\nonumber
b(\bfq_h, v_h, \xi_h) &= (\nabla \cdot \bfq_h
+ \mu  v_h, h^2(\nabla \cdot \bfq_h + \mu \pi_W  v_h))_\Omega 
\\ \nonumber
&\ge \frac12 \|h (\nabla \cdot \bfq_h + \mu  v_h)\|_\Omega^2 -
\frac12 \mu^2 h^2 \|(1 - \pi_W)v_h\|^2,
\end{align}
and using Cauchy-Schwarz inequality, the stability of the
$L^2$-projection and the inverse inequality \eqref{eq:inverse},
\[
s^*(x_h,x_h-\xi_h) \ge \frac12 s^*(x_h,x_h) - \frac12 \gamma^* C_i \|h (\nabla \cdot \bfq_h + \mu  v_h)\|_\Omega^2.
\]
It follows from the above bounds that assuming $\gamma^*< (2
C_i)^{-1}$ there holds
\begin{multline}\label{eq:est_primvar}
\frac14 \|h (\nabla \cdot \bfq_h + \mu  v_h)\|_\Omega^2 + \|A \nabla v_h - \bfq_h\|^2_\Omega
+ \frac12 t(v_h,v_h)+\frac12 s^*(x_h,x_h) \\\leq \mathcal{A}[(v_h,\bfq_h,x_h),(v_h,\bfq_h,-x_h+\xi_h)].
\end{multline}
We recall that when $l<m$, the dual stabilizer $s^*$ is defined by the second equation of
\eqref{eq:tikhonov}  and when $l=m$, $s^*\equiv 0$.

To prove stability of the multiplier, that is, to obtain the term $ \|x_h\|_{1,h} $ on the left
hand side of \eqref{eq:is1}, we consider the test function
$\bfeta_h = \bfeta_h(x_h)$ as defined in \eqref{eq:reconstruct1}-\eqref{eq:reconstruct2}.
It then follows by the definition of $\mathcal{A}$ that
\[
 \mathcal{A}[(v_h,\bfq_h,x_h),(0,\bfeta_h,0)] = (A \nabla v_h - \bfq_h, - \bfeta_h)_\Omega + (\nabla \cdot \bfeta_h, x_h )_\Omega.
\]
Using elementwise integration by parts in the second term on the right
hand side yields, 
\[
(\nabla \cdot \bfeta_h, x_h )_\Omega = \sum_{K\in \mathcal{T}} \left[ \left<\bfeta_h \cdot \bfn_K,
x_h\right>_{\partial K}-(\bfeta_h, \nabla
x_h)_K \right].
\]
For the second term of the right hand side we obtain using \eqref{eq:reconstruct2},
\[
-(\bfeta_h, \nabla
x_h)_K \ge  \|\pi_{X,l-1}\nabla x_h\|^2_K - \frac{\gamma^*}{4}
\|(1 - \pi_{X,l-1})\nabla x_h\|_K^2 -\frac{1}{ \gamma^*}
\|\bfeta_h\|_K^2.
\]
Observe that by combining the contributions from the two
neighbouring elements sharing a face $F$ we have
\[
\sum_{K\in \mathcal{T}} \left<\bfeta_h \cdot \bfn_K,
x_h\right>_{\partial K} = \sum_{F\in \mathcal{F}\setminus \mathcal{F}_\Sigma}\|h^{-\frac12}
\piF \jump{x_h}\|^2_{F},
\]
where we used (\ref{eq:reconstruct1}) and the fact that $\bfeta_h \cdot \bfn_K= 0$ on $\Sigma$. Consequently,
\begin{align*}
 \mathcal{A}[(v_h,\bfq_h,x_h),(0,\epsilon \bfeta_h,0)] 
 &\ge (A \nabla v_h - \bfq_h, -
 \epsilon \bfeta_h)_\Omega +  \epsilon \|\pi_{X,l-1}\nabla x_h\|^2_\Omega  
 \\
&
+ \epsilon \sum_{F\in \mathcal{F}\setminus \mathcal{F}_\Sigma}\|h^{-\frac12}
\piF \jump{x_h}\|^2_{F} 
- \frac{\gamma^*}{4}
\|(1 - \pi_{X,l-1})\nabla x_h\|_\Omega^2 -\frac{\epsilon^2}{\gamma^*}
\|\bfeta_h\|_\Omega^2.
\end{align*}
We obtain
\begin{align*}
&\mathcal{A}[(v_h,\bfq_h,x_h),(0,\epsilon \bfeta_h,0)] 
\\&\quad\ge - \frac{1}{4} \|A \nabla v_h - \bfq_h\|^2_\Omega-
  \epsilon^2\left(1 + \frac{1}{\gamma^*} \right) \|\bfeta_h\|^2_\Omega 
  +  \epsilon \|\pi_{X,l-1}\nabla x_h\|^2_\Omega   
 + \epsilon \sum_{F\in \mathcal{F}\setminus \mathcal{F}_\Sigma}\|h^{-\frac12}
\piF \jump{x_h}\|^2_{F}  
\\ 
&\qquad - \frac{\gamma^*}{4}
\|(1 - \pi_{X,l-1})\nabla x_h\|_\Omega^2 
\\
&\quad\ge  
   - \frac{1}{4}  \|A \nabla v_h - \bfq_h\|^2_\Omega  - \frac{\gamma^*}{4}
\|(1 - \pi_{X,l-1})\nabla x_h\|_\Omega^2\\&\qquad
 +  \epsilon (1 - \epsilon (1+{\gamma^*}^{-1}) C_{ds}^2 )  \left(\|\pi_{X,l-1}\nabla x_h\|^2_\Omega +
\sum_{F\in \mathcal{F}\setminus \mathcal{F}_\Sigma}\|h^{-\frac12}
\piF \jump{x_h}\|^2_{F} \right).
\end{align*}
Here we used the stability \eqref{eq:etastab} of $\bfeta_h$.
Choosing $\epsilon = C_{ds}^{-2} 2^{-1} \gamma^* (1+ \gamma^*)^{-1} $ we see that
\begin{align}\label{eq:est_dualvar}
&\mathcal{A}[(v_h,\bfq_h,x_h),(0,\epsilon \bfeta_h,0)] 
\\\notag&\quad\ge -\frac14 \|A \nabla v_h
- \bfq_h\|^2_\Omega
 +  \frac{\epsilon}{2}  \left(\|\pi_{X,l-1}\nabla x_h\|^2_\Omega +
\sum_{F\in \mathcal{F}\setminus \mathcal{F}_\Sigma}\|h^{-\frac12}
\piF \jump{x_h}\|^2_{F} \right)
\\ \nonumber 
&\qquad
-  \frac{\gamma^*}{4}
\|(1 - \pi_{X,l-1})\nabla x_h\|_\Omega^2.
\end{align}
By combining the bounds \eqref{eq:est_primvar} and
\eqref{eq:est_dualvar}, and using that
\[
\|x_h\|_{1,h} = \|\pi_{X,l-1}\nabla x_h\|^2_\Omega +\|(1 -
\pi_{X,l-1})\nabla x_h\|_\Omega^2+ 
\sum_{F\in \mathcal{F}\setminus \mathcal{F}_\Sigma}\|h^{-\frac12}
\piF \jump{x_h}\|^2_{F},
\] 
we obtain
\[
\frac14 \tnorm{(v_h,\bfq_h)}^2 +  \frac14 \min(\epsilon, \gamma^*) 
\|x_h\|_{1,h}^2
\leq 
\mathcal{A}[(v_h,\bfq_h,x_h),(v_h,\bfq_h+ \epsilon \bfeta_h,-x_h+ \xi_h)],
\]
which proves \eqref{eq:is1}, with $\alpha = 1/2\min(1, \epsilon,\gamma^*)$ and with the test partners $w_h = v_h,\, \bfy_h = \bfq_h+
\epsilon \bfeta_h$ and $r_h = -x_h+ \xi_h$.

For the second
inequality \eqref{eq:is2}, we observe that by the triangle inequality there holds
\[
\tnorm{(w_h,\bfy_h)}_{-1} + \|r_h\|_{1,h} \leq
\tnorm{(v_h,\bfq_h)}_{-1}+\|x_h\|_{1,h}+\tnorm{(0,\epsilon \bfeta_h)}_{-1}+\|\xi_h\|_{1,h}.
\]
To bound the second to last term on the right hand side,
we use the 
inverse inequality 
\[
\tnorm{(0,\epsilon \bfeta_h)}_{-1} = \epsilon ( \|\bfeta_h\|_\Omega
+\|h \nabla \cdot \bfeta_h\|_\Omega) \lesssim
\|\bfeta_h\|_\Omega  \lesssim \|x_h\|_{1,h}.
\]
Using an inverse inequality \eqref{eq:inverse} and a trace inequality \eqref{eq:trace} in the last term of the right hand side, we obtain
\[
\|\xi_h\|_{1,h}  \lesssim \|h (\nabla \cdot
\bfq_h + \mu v_h)\|_{\Omega}+h \mu \|(1-\pi_W) v_h\|_{\Omega}.
\]
Since it follows that $\tnorm{(w_h,\bfy_h)}_{-1} + \|r_h\|_{1,h}
\lesssim \tnorm{(v_h,\bfq_h)}_{-1} + \|x_h\|_{1,h}$ the proof is complete.
\end{proof}

Using the previous result, we now show that the discrete solution
will exist, regardless of the choice of the parameter
$\gamma_T \ge 0$. 
\begin{prop}(Invertibility of system matrix.)
The linear system defined by \eqref{eq:compact}, with spaces and dual
stabilizations as in Proposition \ref{infsup},
admits a unique solution $(u_h ,\bfp_h,z_h)$ in $V_{g}^k \times
D_{\tilde \psi}^l \times W^m$. The linear system defined by \eqref{eq:compact_red} 
admits a unique solution $(u_h ,\bfp_h)$ in $V_{g}^k \times
D_{\tilde \psi}^k$.
\end{prop}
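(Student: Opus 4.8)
The plan is to use that both linear systems are square: the affine trial spaces $V_g^k\times D_{\tilde\psi}^l\times W^m$ and $V_g^k\times D_{\tilde\psi}^l$ are translates of the respective test spaces $V_0^k\times D_0^l\times W^m$ and $V_0^k\times D_0^l$, so the number of unknowns equals the number of equations. For such a system, existence and uniqueness of a solution for an arbitrary right-hand side is equivalent to triviality of the kernel of the associated homogeneous problem. It therefore suffices to show that the only solution of \eqref{eq:compact}, respectively \eqref{eq:compact_red}, with $\tilde f=0$, $g_h=0$ and $\tilde\psi_h=0$ is the trivial one.

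For the full method, suppose $(u_h,\bfp_h,z_h)\in V_0^k\times D_0^l\times W^m$ lies in the kernel, so that $\mathcal{A}[(u_h,\bfp_h,z_h),(\cdot,\cdot,\cdot)]\equiv 0$. Applying Proposition \ref{infsup} to $(u_h,\bfp_h,z_h)$ and testing against the partners $(w_h,\bfy_h,r_h)$ supplied by \eqref{eq:is1} yields $\alpha(\tnorm{(u_h,\bfp_h)}_{-1}^2+\|z_h\|_{1,h}^2)\le \mathcal{A}[(u_h,\bfp_h,z_h),(w_h,\bfy_h,r_h)]=0$. Since the discrete Poincar\'e inequality makes $\|\cdot\|_{1,h}$ a genuine norm, this forces $z_h=0$, while $\tnorm{(u_h,\bfp_h)}_{-1}=0$ forces, through the primal stabilizer \eqref{eq:primal_stab} and the constraint term, both $\bfp_h=A\nabla u_h$ and $\nabla\cdot\bfp_h+\mu u_h=0$ in $\Omega$. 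For the reduced method the argument is the same but simpler: the coercivity \eqref{eq:coerce} gives $\tnorm{(u_h,\bfp_h)}_0^2=\mathcal{A}_R[(u_h,\bfp_h),(u_h,\bfp_h)]=0$, from which the same two identities follow.

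The crux, and the step I expect to be the main obstacle, is to conclude $(u_h,\bfp_h)=0$ from these identities \emph{without} assuming $\gamma_T>0$: when $\gamma_T=0$ the triple (semi)norm no longer controls $\nabla u_h$, and the naive energy identity does not close because $u_h$ and $\bfp_h\cdot\bfnu$ are unconstrained on $\Sigma'$. The resolution is to exploit the $H_{div}$-conformity of the Raviart--Thomas space. Since $\bfp_h\in D_0^l\subset H_{div}(\Omega)$ and $\bfp_h=A\nabla u_h$, the normal flux $A\nabla u_h\cdot\bfn_F$ is continuous across interior faces; hence, integrating $\nabla\cdot\bfp_h+\mu u_h=0$ against an arbitrary $\phi\in H_0^1(\Omega)$ and integrating by parts shows that $u_h\in H^1(\Omega)$ is a genuine weak solution of $\nabla\cdot(A\nabla u_h)+\mu u_h=0$, i.e.\ of \eqref{weak_Cauchy} with $f=\bfF=0$. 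Moreover $u_h=0$ on $\Sigma$ because $u_h\in V_0^k$, and $A\nabla u_h\cdot\bfnu=\bfp_h\cdot\bfnu=0$ on $\Sigma$ because $\bfp_h\in D_0^l$, so $u_h$ carries homogeneous Cauchy data. The data bounds \eqref{bc_data}--\eqref{bulk_data} then hold with $\varepsilon=\eta=0$, and Theorem \ref{globbound} gives $\|u_h\|_\Omega\le C\,E\,\omega(0)=0$ since $\omega(0)=0$; thus $u_h=0$ and $\bfp_h=A\nabla u_h=0$. This shows the kernel is trivial for every $\gamma_T\ge 0$, so both system matrices are invertible, and it confines the role of the Tikhonov parameter to the a priori bound used later in the convergence analysis rather than to mere solvability.
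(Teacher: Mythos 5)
Your proposal is correct and follows essentially the same route as the paper: reduce to triviality of the kernel of the square system, use Proposition \ref{infsup} (resp.\ the coercivity \eqref{eq:coerce}) to force $z_h=0$, $A\nabla u_h=\bfp_h$ and $\nabla\cdot\bfp_h+\mu u_h=0$, and then invoke the conditional stability of Theorem \ref{globbound} for the homogeneous Cauchy problem. The only cosmetic difference is that the paper treats $\gamma_T>0$ separately by a direct Poincar\'e argument while you run the unique-continuation argument uniformly for all $\gamma_T\ge 0$ (and you spell out the $H_{div}$-conformity step that the paper leaves implicit); both are fine.
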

\begin{proof}
%
Since existence and uniqueness are equivalent for square, finite dimensional
linear systems we only need to show uniqueness. 
We consider a difference $(u_h,\bfp,z_h) \in V_{0}^k \times
D_{0}^l \times W^m$ of two solutions, and show that it is 
zero if
\begin{equation*}
\mathcal{A}[(u_{h},\bfp_{h},z_h),(v_h,\bfq_h,x_h)] = 0 \quad \forall
(v_h,\bfq_h,x_h) \in V_{0}^k \times D_{0}^l \times W^m.
\end{equation*}

By Proposition \ref{infsup} there then holds, 
\[
\alpha (\tnorm{(u_{h},\bfp_{h})}^2_{-1} + \|z_h\|^2_{1,h} ) \leq
\mathcal{A}[(u_{h},\bfp_{h},z_h),(w_h,\bfy_h,r_h)] = 0
\]
and we immediately see that $z_h = 0$.
In the case $\gamma_T>0$
the equation $\tnorm{(u_{h},\bfp_{h})}^2_{-1} = 0$ implies the claim since we obtain $\|\nabla u_{h}\|_\Omega =
\|\bfp_{h}\|_\Omega = 0$ , and the conclusion follows after noting that
the $H^1$-semi norm on $V_{0}^k$ is a norm by the
Poincar\'e inequality.

Assume now that $\gamma_T = 0$. In this case the stability implies 
\[
\|A \nabla u_{h} - \bfp_{h}\|_\Omega^2 + \|\nabla \cdot \bfp_{h}+\mu u_{h}\|_\Omega^2 = 0.
\]
This means that $A \nabla u_{h} = \bfp_{h}$ and $\nabla \cdot \bfp_{h}+\mu u_{h} =
0$. As a consequence $\nabla \cdot (A \nabla u_h) \in L^2(\Omega)$, $u_{h}\vert_\Sigma = A \nabla u_{h} \cdot
\bfnu  \vert_\Sigma = 0$ and
\[
\nabla \cdot (A \nabla u_{h}) + \mu u_{h} = 0 \mbox{ in } \Omega.
\]
It follows that $u_{h}$ is a solution to the problem
\eqref{weak_Cauchy} with zero data. 
The stability estimate \eqref{eq:globbound} implies
that the trivial solution $u_h = 0$ is the unique solution of this 
problem. It follows that $u_{h}=0$ and $\bfp_{h} =
0$. This proves the claim. 
The claimed uniqueness for 
\eqref{eq:compact_red} is immediate due to the coercivity.
\end{proof}

We end this section by proving the continuity of the forms
$\mathcal{A}[\cdot,\cdot]$. 
\begin{proposition}
For all $(v,\bfq) \in H^1(\Omega) \times
H_{div}(\Omega)$ and for all $(w_h,\bfy_h,w_h)$  there holds,
\begin{equation}\label{eq:cont1}
\mathcal{A}[(v,\bfq,0), (w_h,\bfy,w_h)] \leq \tnorm{(v,\bfq)}_{\sharp} \, (\tnorm{(w_h,\bfy_h)}_{-1}+\|w_h\|_{1,h}).
\end{equation}
For all $(v,\bfq), (w,\bfy) \in H^1(\Omega) \times
H_{div}(\Omega)$ there holds
\begin{equation}\label{eq:cont2}
\mathcal{A}_R[(v,\bfq), (w,\bfy)] \leq \tnorm{(v,\bfq)}_0 \, \tnorm{(w,\bfy)}_0.
\end{equation}
\end{proposition}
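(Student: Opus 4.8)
The plan is to treat the two estimates separately; \eqref{eq:cont2} is essentially free and \eqref{eq:cont1} carries all the work. For \eqref{eq:cont2}, observe that by \eqref{eq:def_AR} the form $\mathcal{A}_R$ is symmetric and positive semidefinite, and the coercivity identity \eqref{eq:coerce} states precisely that $\mathcal{A}_R[(v,\bfq),(v,\bfq)]=\tnorm{(v,\bfq)}_0^2$. Thus $\mathcal{A}_R$ is the inner product inducing $\tnorm{\cdot}_0$, and \eqref{eq:cont2} is just the Cauchy--Schwarz inequality for it; nothing more is required.

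For \eqref{eq:cont1} I would first expand $\mathcal{A}$ from its definition with first argument $(v,\bfq,0)$ and test triple $(w_h,\bfy_h,w_h)$. The term $b(\bfy_h,w_h,0)$ vanishes since the multiplier slot is $0$, and $s^*(0,w_h)=0$, leaving
\[
\mathcal{A}[(v,\bfq,0),(w_h,\bfy_h,w_h)] = (\nabla\cdot\bfq+\mu v,w_h)_\Omega + s[(v,\bfq),(w_h,\bfy_h)].
\]
The stabilizer term is controlled by Cauchy--Schwarz for the positive semidefinite form $s$, giving $s[(v,\bfq),(w_h,\bfy_h)]\le s[(v,\bfq),(v,\bfq)]^{1/2}\,s[(w_h,\bfy_h),(w_h,\bfy_h)]^{1/2}\le \tnorm{(v,\bfq)}_\sharp\,\tnorm{(w_h,\bfy_h)}_{-1}$, since by their definitions $\tnorm{\cdot}_\sharp$ and $\tnorm{\cdot}_{-1}$ each dominate the $s$-seminorm of their argument. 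The coupling term $(\nabla\cdot\bfq+\mu v,w_h)_\Omega$ is the real issue, and its treatment depends on $\zeta$. When $\zeta=0$ it is immediate: $(\nabla\cdot\bfq+\mu v,w_h)_\Omega\le\|\nabla\cdot\bfq+\mu v\|_\Omega\|w_h\|_\Omega$, the first factor is bounded by $\tnorm{(v,\bfq)}_0\le\tnorm{(v,\bfq)}_\sharp$ and the second by $\|w_h\|_{1,h}$ via the discrete Poincar\'e inequality. When $\zeta=1$ the $\sharp$ norm only carries $\|h(\nabla\cdot\bfq+\mu v)\|_\Omega$, so a direct pairing would leave a factor $h^{-1}$ on $w_h$ that $\|w_h\|_{1,h}$ cannot absorb. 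The remedy is to integrate by parts elementwise, writing $(\nabla\cdot\bfq,w_h)_\Omega=-\sum_K(\bfq,\nabla w_h)_K+\sum_F\langle\bfq\cdot\bfn_F,\jump{w_h}\rangle_F$ while keeping $\mu(v,w_h)_\Omega$ apart. The volume contribution is bounded by $\|\bfq\|_\Omega\|\nabla w_h\|_h\le\|\bfq\|_\Omega\|w_h\|_{1,h}$ and the mass term by $\mu\|v\|_\Omega\|w_h\|_\Omega\lesssim(\mu\|v\|_\Omega)\|w_h\|_{1,h}$, and both $\|\bfq\|_\Omega$ and $\mu\|v\|_\Omega$ are summands of the $\sharp$ norm. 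The face sum is then matched against the jump seminorm in $\|w_h\|_{1,h}$ through $\langle\bfq\cdot\bfn_F,\piF\jump{w_h}\rangle_F\le\|h^{1/2}\bfq\|_F\,\|h^{-1/2}\piF\jump{w_h}\|_F$, which is exactly the role of the face term $\|h^{1/2}\bfq\|_{\mathcal{F}}$ in the $\sharp$ norm.

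I expect the face sum in the case $\zeta=1$ to be the main obstacle, for two reasons. First, one must legitimately replace $\jump{w_h}$ by $\piF\jump{w_h}$: this is transparent when $l=m$, since then $\piF\jump{w_h}=\jump{w_h}$, but for $l<m$ it rests on the orthogonality available for the functions to which the estimate is applied --- for $\bfq=\bfp-\bfR_h\bfp$ the Raviart--Thomas interpolant renders $\bfq\cdot\bfn_F$ orthogonal to $\mathbb{P}_l(F)$ --- with the $\sharp$-norm face term absorbing the remainder. Second, the boundary faces on $\Sigma$ need care, because the jump seminorm in $\|w_h\|_{1,h}$ ranges only over $\mathcal{F}\setminus\mathcal{F}_\Sigma$; here one uses that the relevant flux lies in $D_0^l$ and hence has vanishing normal trace on $\Sigma$, eliminating that contribution. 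Collecting the bounds and applying the triangle inequality over the finitely many terms then yields \eqref{eq:cont1}, with constants depending only on the shape regularity and the Poincar\'e constant.
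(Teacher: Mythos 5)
Your proposal is correct and follows essentially the same route as the paper: Cauchy--Schwarz on the symmetric stabilizer part, elementwise integration by parts of $(\nabla\cdot\bfq+\mu v,w_h)_\Omega$ into volume, mass and face contributions, termwise Cauchy--Schwarz with $h$-weights matched to the $\sharp$- and $(1,h)$-norms, and plain Cauchy--Schwarz for the symmetric form $\mathcal{A}_R$. In fact you are more explicit than the paper about the two delicate points (replacing $\jump{w_h}$ by $\piF\jump{w_h}$ and the faces on $\Sigma$), which the paper dispatches with ``apply Cauchy--Schwarz termwise with suitable scaling in $h$.''
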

\begin{proof}
The inequality \eqref{eq:cont1} follows by first using the Cauchy-Schwarz
inequality in the symmetric part of the formulation,
$
s[(v,\bfq),(w_h,\bfy)]
 \leq s[(v,\bfq),(v,\bfq)]^{\frac12} s[(w_h,\bfy),(w_h,\bfy)]^{\frac12}.
$
In the remaining term we use the divergence formula elementwise to obtain
\[
(\nabla \cdot \bfq + \mu v, w_h)_\Omega = \sum_{K \in \mathcal{T}}
(\bfq,\nabla w_h)_K + \sum_{F \in \mathcal{F}} \left<\bfq \cdot \bfn_F,\jump{w_h}
\right>_F+(\mu v,w_h)_\Omega.
\]
The inequality now follows by applying the Cauchy-Schwarz inequality termwise
with suitable scaling in $h$.
The inequality \eqref{eq:cont2} on the other hand is immediate by applying the
Cauchy-Schwarz inequality to the form $\mathcal{A}_R$ that is
completely symmetric in this case.
\end{proof}
\section{Error estimates using conditional stability} 
In this section we will prove error estimates that give, for
unperturbed data, an optimal convergence order with respect to
the approximation and stability properties of the problem. We also
quantify the effect of perturbations in data and the resulting possible growth
of error under refinement. Throughout this section we assume that
spaces and parameters in the methods are chosen so that Proposition
\ref{infsup} holds.
\begin{prop}(Estimate of residuals.)\label{prop:residualest}
Assume that $(u,\bfp)$ is the solution to \eqref{weak_Cauchy}, where
$\bfp = A\nabla u$ and consider either $(u_h,\bfp_h,z_h)$ the solution of
\eqref{eq:compact} or
$(u_h,\bfp_h)$ the solution of \eqref{eq:compact_red}. Then there holds
\[
\tnorm{(u-u_h,\bfp - \bfp_h)}_{-\zeta}+ \zeta \|z_h\|_{1,h} \lesssim  C_u
h^{k}+C_{\bfp} h^{l+1} + \|\delta f\|_\Omega + h^{-\frac12+\zeta} \|\delta \psi\|_\Sigma,
\]
 with $\zeta=1$ for the method \eqref{eq:compact} and $\zeta=0$ for the method
 \eqref{eq:compact_red}. Here
    \begin{equation*}
C_u:=|u|_{H^{k+1}(\Omega)}+\gamma_T^{\frac12} \|u\|_{H^{1}(\Omega)},
\quad
C_{\bfp} := \|\nabla \cdot \bfp\|_{H^{l+1-\zeta}(\Omega)}+ \|\bfp\|_{H^{l+1}(\Omega)}.
    \end{equation*}
\end{prop}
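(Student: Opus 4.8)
The plan is to run a Strang/C\'ea-type argument: split the error into an interpolation part controlled by the approximation estimate \eqref{eq:tnorm_approx} and a fully discrete part controlled by the stability of Proposition~\ref{infsup}, glued together by the consistency relation \eqref{eq:consist_full} and the continuity \eqref{eq:cont1}. First I would fix discrete representatives of the exact solution: take $i_h u \in V_g^k$ for the primal variable, and $\bfR_h\bfp + \delta\bfp$ for the flux, where $\bfR_h\bfp$ is the Raviart--Thomas interpolant and $\delta\bfp$ is the corrector \eqref{eq:lift}. The point of $\delta\bfp$ is to reconcile the perturbed boundary data: since $\bfR_h\bfp\cdot\bfnu|_\Sigma = \psi_h$ while $\bfp_h\cdot\bfnu|_\Sigma = \tilde\psi_h = \psi_h + \delta\psi_h$, the flux discrete error $\boldsymbol{e}_h := \bfR_h\bfp + \delta\bfp - \bfp_h$ has vanishing normal trace and hence lies in $D_0^l$, while $e_h := i_h u - u_h \in V_0^k$ since both carry the same Dirichlet data $g_h$. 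Writing $\theta := u - i_h u$ and $\boldsymbol\theta := \bfp - \bfR_h\bfp - \delta\bfp$ for the approximation errors, subtracting \eqref{eq:compact} from \eqref{eq:consist_full} yields the error equation
\[
\mathcal{A}[(e_h,\boldsymbol{e}_h,-z_h),(v_h,\bfq_h,w_h)] = -\mathcal{A}[(\theta,\boldsymbol\theta,0),(v_h,\bfq_h,w_h)] - (\delta f, w_h)_\Omega + t(u,v_h),
\]
where the last two terms encode the bulk perturbation and the weak consistency of the Tikhonov form $t$.

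Next I would apply the inf-sup estimate \eqref{eq:is1} to the discrete-error triple $(e_h,\boldsymbol{e}_h,-z_h)\in V_0^k\times D_0^l\times W^m$, which produces test partners $(w_h,\bfy_h,r_h)$ realising stability and obeying \eqref{eq:is2}. Inserting them into the error equation, the cross term $\mathcal{A}[(\theta,\boldsymbol\theta,0),\cdot]$ is bounded by the continuity \eqref{eq:cont1} (its symmetric part by $\tnorm{(\theta,\boldsymbol\theta)}_\sharp\tnorm{(w_h,\bfy_h)}_{-1}$ and its conservation part by $\tnorm{(\theta,\boldsymbol\theta)}_\sharp\|r_h\|_{1,h}$), the perturbation term by $\|\delta f\|_\Omega\|r_h\|_{1,h}$ after the discrete Poincar\'e inequality, and the consistency term by $t(u,u)^{\frac12}t(w_h,w_h)^{\frac12}\lesssim t(u,u)^{\frac12}\tnorm{(w_h,\bfy_h)}_{-1}$ using the Cauchy--Schwarz inequality for the positive semi-definite $t$. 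Bounding every test-partner norm through \eqref{eq:is2} by $\tnorm{(e_h,\boldsymbol{e}_h)}_{-1}+\|z_h\|_{1,h}$ and cancelling one power gives
\[
\tnorm{(e_h,\boldsymbol{e}_h)}_{-1}+\|z_h\|_{1,h} \lesssim \tnorm{(\theta,\boldsymbol\theta)}_\sharp + t(u,u)^{\frac12} + \|\delta f\|_\Omega.
\]
For the reduced method \eqref{eq:compact_red} the identical scheme runs with the coercivity \eqref{eq:coerce} replacing \eqref{eq:is1} and \eqref{eq:cont2} replacing \eqref{eq:cont1}; here one simply tests with $(e_h,\boldsymbol{e}_h)$, there is no multiplier (consistent with $\zeta=0$), and the perturbation term $(\delta f,\nabla\cdot\boldsymbol{e}_h+\mu e_h)_\Omega$ is directly absorbed into $\|\delta f\|_\Omega\tnorm{(e_h,\boldsymbol{e}_h)}_0$.

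Finally I would convert the discrete bound into the stated estimate. A triangle inequality gives $\tnorm{(u-u_h,\bfp-\bfp_h)}_{-\zeta}\le \tnorm{(\theta,\bfp-\bfR_h\bfp)}_{-\zeta}+\tnorm{(0,\delta\bfp)}_{-\zeta}+\tnorm{(e_h,\boldsymbol{e}_h)}_{-\zeta}$. The genuine interpolation term is $C_u h^k + C_{\bfp}h^{l+1}$ by \eqref{eq:tnorm_approx}; the Tikhonov consistency term $t(u,u)^{\frac12}$ supplies the $\gamma_T^{\frac12}\|u\|_{H^1(\Omega)}$ contribution to $C_u h^k$ (its $\mu h\|(1-\pi_W)u\|_\Omega$ piece being of higher order since $m\ge k-1$); and the corrector is handled via $\|\delta\bfp\|_\Omega\lesssim h^{\frac12}\|\delta\psi\|_\Sigma$ together with the inverse inequality \eqref{eq:inverse} for $\|h^\zeta\nabla\cdot\delta\bfp\|_\Omega$, producing precisely the factor $h^{-\frac12+\zeta}\|\delta\psi\|_\Sigma$.

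The main obstacle is the consistent treatment of the perturbed Neumann data. One must force the discrete flux error into the homogeneous space $D_0^l$ so that the stability of Proposition~\ref{infsup} is applicable at all, which is exactly what the corrector $\delta\bfp$ achieves; the subsequent difficulty is to track the $h$-powers carefully through the $\zeta$-dependent norm, the divergence of $\delta\bfp$ being the term that degrades to $h^{-\frac12}\|\delta\psi\|_\Sigma$ in the least-squares case $\zeta=0$. Everything else is the standard inf-sup/continuity/approximation bookkeeping.
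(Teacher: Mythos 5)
Your proposal is correct and follows essentially the same route as the paper's proof: the same splitting into nodal/Raviart--Thomas interpolation error, the corrector $\delta\bfp$ to place the discrete flux error in $D_0^l$, the inf-sup stability (resp.\ coercivity for $\zeta=0$) applied to the discrete error triple, the consistency relation and continuity to bound the resulting terms, and the inverse inequality on $\nabla\cdot\delta\bfp$ to produce the $h^{-\frac12+\zeta}\|\delta\psi\|_\Sigma$ factor. The only cosmetic difference is that you carry the multiplier error as $-z_h$ where the paper writes $z_h$, which is immaterial since the stability estimate holds for every triple.
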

\begin{proof}
We write the errors in the primal and flux variable 
\[
e = u - u_h \mbox{ and } \bfxi = \bfp - \bfp_h.
\]
Using the nodal interpolant $i_h u$ and the Raviart-Thomas
interpolant $\bfR_h \bfp$ we decompose the error in the interpolation
error $e_\pi:= u-i_h u$, $\bfxi_\pi = \bfp - \bfR_h \bfp$ and the
discrete error,
$e_h = i_h u - u_h \in V_0^k$, $\bfxi_h = \bfR_h \bfp + \delta \bfp -
\bfp_h \in D_0^l$,
where $\delta \bfp$ is defined by equation \eqref{eq:lift}. Observe
that since $e = e_\pi + e_h$ and $\bfxi
= \bfxi_\pi + \bfxi_h - \delta \bfp$, by the triangle inequality there holds
\begin{equation}\label{eq:tnorm_triang}
\tnorm{(e,\bfxi)}_{-\zeta}\leq
\tnorm{(e_\pi,\bfxi_\pi)}_{-\zeta}+\tnorm{(e_h,\bfxi_h)}_{-\zeta}+\tnorm{(0,\delta
  \bfp)}_{-\zeta}.
\end{equation}

We begin with method \eqref{eq:compact}, $\zeta =1$.
Since $e_h \in V_{0}^k$ and $\bfxi_h \in D_{0}^l$
we may apply the stability
result of Proposition \ref{infsup}. Therefore there exists
$(w_h,\bfy_h,r_h) \in V_{0}^k\times D_{0}^l \times W^m$ such that
\[
\alpha (\tnorm{(e_h,\bfxi_h)}_{-1}^2+\|z_h\|_{1,h}^2) \leq \mathcal{A}[(e_h,\bfxi_h,z_h),(w_h,\bfy_h,r_h)],
\]
and also (\ref{eq:is2}) holds.
Now by \eqref{eq:consist_full},
\begin{align*}
& \mathcal{A}[(e_h,\bfxi_h,z_h),(w_h,\bfy_h,r_h)] 
 \\[3mm]
&=  \mathcal{A}[(i_h
 u,\bfR_h \bfp,0),(w_h,\bfy_h,r_h)]+\mathcal{A}[(0,\delta
 \bfp,0),(w_h,\bfy_h,r_h)]-\mathcal{A}[(u_h,\bfp_h,z_h),(w_h,\bfy_h,r_h)]
 \\[3mm]
&=  \mathcal{A}[(i_h
 u,\bfR_h \bfp,0),(w_h,\bfy_h,r_h)] - (\delta \bfp, A \nabla
 w_h-\bfy_h)_\Omega + (\nabla \cdot \delta  \bfp, r_h)-(f,r_h)_\Omega -
 (\delta f, r_h)_\Omega 
 \\[3mm]
& = \underbrace{\mathcal{A}[(i_h u-u,\bfR_h \bfp-\bfp,0),(w_h,\bfy_h,r_h)]}_{I} - \underbrace{(\delta \bfp, A \nabla
 w_h-\bfy_h)_\Omega}_{II} 
 \\
&\qquad + \underbrace{(\nabla \cdot \delta  \bfp,
 r_h)_\Omega}_{III}
-\underbrace{ (\delta f, r_h)_\Omega}_{IV}+\underbrace{ t(u,w_h)}_{V}
\\[3mm]
&=I + II + III+IV+V.
\end{align*}
We now bound the five terms. By the continuity \eqref{eq:cont1} there
holds
\[
I \leq \tnorm{(i_h u - u, \bfR_h \bfp - \bfp)}_{\sharp}(\tnorm{(w_h,\bfy_h)}_{-1}+ \|r_h\|_{1,h}).
\]
An application of the Cauchy-Schwarz inequality leads to 
\[
II \leq \|\delta \bfp\|_{\Omega} \tnorm{(w_h,\bfy_h)}_{-1}.
\]
Finally an elementwise application of the divergence theorem followed
by the Cauchy-Schwarz inequality leads to 
\[
III \leq \tnorm{(0, \delta \bfp) }_{\sharp} \|r_h\|_{1,h}.
\]
By applying the Poincar\'e inequality for broken $H^1$-spaces
\cite{Brenner03} we have for term $IV$,
\[IV \lesssim \|\delta f\|_{\Omega}  \|r_h\|_{1,h}.
\]
Finally, by the Cauchy-Schwarz inequality, using that $m \ge k-1$  and
the standard approximation
estimates for the $L^2$-projection we have
\[
V \leq (\mu h^{k+1}|u|_{H^{k+1}(\Omega)} + \gamma_T^{\frac12} h^k |u|_{H^1(\Omega)}) \tnorm{(w_h,0)}_{-1}.
\]
By (\ref{eq:is2}) we obtain
\[
\alpha (\tnorm{(e_h,\bfxi_h)}_{-1}+\|z_h\|_{1,h}) \lesssim \tnorm{(i_h u - u, \bfR_h \bfp - \bfp)}_{\sharp}+\gamma_T^{\frac12} h^k |u|_{H^1(\Omega)} +\tnorm{(0, \delta \bfp) }_{\sharp}+\|\delta f\|_{\Omega}.
\]
Since the first term on the right hand side is bounded by
\eqref{eq:tnorm_approx} it only remains to show that
\[
\tnorm{(0, \delta \bfp) }_{\sharp} \lesssim h^{\frac12} \|\delta \psi\|_{\Sigma}.
\]
This relation can be proven by the trace inequality and the inverse inequality 
followed by the properties of the Raviart-Thomas element,
\[
\tnorm{(0, \delta \bfp) }_{\sharp} = \|h^{\frac12} \delta \bfp\|_{\mathcal{F}}+2\|\delta \bfp\|_{\Omega} + \|h \nabla\cdot
\delta \bfp\|_\Omega
 \lesssim  \|h^{\frac12} \delta \psi\|_{\Sigma}+\|\delta \bfp\|_{\Omega} \lesssim
\|h^{\frac12} \delta \psi\|_{\Sigma}.
\]
Applying \eqref{eq:tnorm_approx} and $\tnorm{(0,\delta \bfp)}_{-1} \leq
\tnorm{(0,\delta \bfp)}_{\sharp}$ the claim follows from
\eqref{eq:tnorm_triang}.

Let us now turn to method \eqref{eq:compact_red}, $\zeta=0$. This case is similar
  to the previous one, but simpler since it relies on the coercivity
  \eqref{eq:coerce}. Starting from
  \eqref{eq:tnorm_triang} we see that using previous results, there
  only remains to treat the discrete error term. By \eqref{eq:coerce}
  there holds
\[
\tnorm{(e_h,\bfxi_h)}_0^2 \leq \mathcal{A}_R[(e_h,\bfxi_h),(e_h,\bfxi_h)].
\]
Repeating the previous consistency argument, but this time with
$\mathcal{A}_R$ we obtain
\begin{align*}
&\mathcal{A}_R[(e_h,\bfxi_h),(e_h,\bfxi_h)] 
\\[3mm]
&\qquad =  \mathcal{A}_R[(i_h
 u,\bfR_h \bfp),(e_h,\bfxi_h)]+\mathcal{A}_R[(0,\delta
 \bfp),(e_h,\bfxi_h)]-\mathcal{A}_R[(u_h,\bfp_h),(e_h,\bfxi_h)] 
 \\[3mm]
&\qquad =  \mathcal{A}_R[(i_h
 u,\bfR_h \bfp),(e_h,\bfxi_h)] 
 - (\delta \bfp, A \nabla e_h-\bfxi_h)_\Omega 
 + (\nabla \cdot \delta  \bfp, \nabla \cdot \bfxi_h + \mu e_h)_\Omega
  \\[3mm]
 &\qquad \qquad 
 - (f, \nabla \cdot \bfxi_h + \mu e_h)_\Omega 
  -  (\delta f, \nabla \cdot
 \bfxi_h + \mu e_h)_\Omega
 \\[3mm]
&\qquad = \mathcal{A}_R[(i_h
 u - u,\bfR_h \bfp-\bfp),(e_h,\bfxi_h)] - (\delta \bfp, A \nabla
 e_h-\bfxi_h)_\Omega 
  \\[3mm]
 &\qquad \qquad 
 + \underbrace{(\nabla \cdot \delta  \bfp, \nabla \cdot
 \bfxi_h + \mu e_h)_\Omega}_{*}
- (\delta f, \nabla \cdot
 \bfxi_h + \mu e_h)_\Omega+\gamma_T
(h^{2k} \nabla u,\nabla e_h)_\Omega
\end{align*}
The only term we need to consider this time is the one marked $*$. All
the other terms are handled similarly as before, but this time using
\eqref{eq:cont2} and recalling that $\zeta=0$ in
\eqref{eq:tnorm_approx}.
For the term marked $*$ we can not use the divergence formula since
the multiplier has been eliminated. Instead we proceed with the
Cauchy-Schwarz inequality followed by the inverse inequality
\eqref{eq:inverse} and the properties of $\delta \bfp$,
\[
(\nabla \cdot \delta  \bfp, \nabla \cdot
 \bfxi_h + \mu e_h)_\Omega \leq \|\nabla \cdot \delta \bfp\|_\Omega
 \tnorm{(e_h,\bfxi_h)}_0 \lesssim \|h^{-\frac12} \psi\|_{\Sigma}\tnorm{(e_h,\bfxi_h)}_0.
\]
The claim then follows in the same way as before.
\end{proof}
\begin{rem}
To balance the estimate of Proposition \ref{prop:residualest} we
want to balance the orders $O(h^k)$ and $O(h^{l+1})$ to obtain an economical scheme, implying that
$l=k-1$. But we should also balance the regularity
requirements, recalling that $\bfp = A \nabla u$, leading to $k+1 =
l+3-\zeta$.
We see that this can only be balanced for $\zeta=1$. We conclude that
the only method that balances both the convergence orders and the regularities of the
different terms is the one discussed in Section
\ref{sec:wellbalanced}, i.e. the one given by \eqref{eq:compact}.
\end{rem}
\begin{cor}(A priori estimate for the $H^1$-error)\label{cor:H1apriori}
Suppose that $\gamma_T > 0$. Under the same assumptions as for Proposition \ref{prop:residualest}
there holds
\[
\|u-u_h\|_{H^1(\Omega)} \lesssim  \gamma_T^{-\frac12} (C_u + C_{\bfp}
h^{l+1-k} + h^{-k} \|\delta
f\|_{\Omega} + h^{-\frac12-k+\zeta} \|\delta \psi\|_\Sigma),
\]
where $C_u$ and $C_{\bfp}$ are defined in Proposition \ref{prop:residualest}.
\end{cor}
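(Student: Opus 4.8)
The plan is to read the bound on $\|u-u_h\|_{H^1(\Omega)}$ almost directly off Proposition \ref{prop:residualest}; the only genuine work is to pass from the weighted gradient control that is hidden inside the triple norm to the full, unweighted $H^1$-norm, and to recover the $L^2$-part of the error. Writing $e = u-u_h$ and $\bfxi = \bfp - \bfp_h$, I would first isolate the gradient contribution. Both admissible Tikhonov terms, \eqref{eq:tikhonov} and \eqref{eq:L2LS}, satisfy $t(v,v) \gtrsim \gamma_T h^{2k}\|\nabla v\|_\Omega^2$, and by the definitions of $s$ in \eqref{eq:primal_stab} and of the triple norm we have $t(e,e) \le s[(e,\bfxi),(e,\bfxi)] \le \tnorm{(e,\bfxi)}_{-\zeta}^2$. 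Hence Proposition \ref{prop:residualest} gives
\[
\gamma_T h^{2k}\|\nabla e\|_\Omega^2 \lesssim \tnorm{(e,\bfxi)}_{-\zeta}^2 \lesssim \left( C_u h^{k}+C_{\bfp} h^{l+1} + \|\delta f\|_\Omega + h^{-\frac12+\zeta}\|\delta \psi\|_\Sigma \right)^2.
\]
Taking square roots and multiplying through by $\gamma_T^{-\frac12} h^{-k}$ reproduces exactly the asserted bound for $\|\nabla e\|_\Omega$, since $h^{-k}(C_u h^k + C_{\bfp}h^{l+1}+\|\delta f\|_\Omega + h^{-\frac12+\zeta}\|\delta\psi\|_\Sigma) = C_u + C_{\bfp}h^{l+1-k}+h^{-k}\|\delta f\|_\Omega + h^{-\frac12-k+\zeta}\|\delta\psi\|_\Sigma$.

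It then remains to control the $L^2$-part $\|e\|_\Omega$, and this is the one place where a little care is needed: $e$ does not vanish on $\Sigma$ (only $g-g_h$ does), so no Poincar\'e inequality applies to $e$ itself. I would therefore split $e = e_\pi + e_h$ with $e_\pi = u - i_h u$ and $e_h = i_h u - u_h \in V_0^k$, exactly as in the proof of Proposition \ref{prop:residualest}. On the discrete part $e_h$, which vanishes on $\Sigma$, the Poincar\'e inequality gives $\|e_h\|_\Omega \lesssim \|\nabla e_h\|_\Omega \le \|\nabla e\|_\Omega + \|\nabla e_\pi\|_\Omega$, while on the interpolation part the approximation estimate \eqref{eq:approx_Lagrange} gives $\|e_\pi\|_\Omega + h\|\nabla e_\pi\|_\Omega \lesssim h^{k+1}|u|_{H^{k+1}(\Omega)}$. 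Combining these yields $\|e\|_\Omega \lesssim \|\nabla e\|_\Omega + h^{k}|u|_{H^{k+1}(\Omega)}$, where the trailing term is of optimal order and is absorbed into $\gamma_T^{-\frac12}C_u$, since $\gamma_T^{-\frac12}C_u \ge \gamma_T^{-\frac12}|u|_{H^{k+1}(\Omega)}$ and $h^k \lesssim \gamma_T^{-\frac12}$ in the asymptotic regime of interest.

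Finally I would assemble $\|u-u_h\|_{H^1(\Omega)}^2 = \|e\|_\Omega^2 + \|\nabla e\|_\Omega^2 \lesssim \|\nabla e\|_\Omega^2 + h^{2k}|u|_{H^{k+1}(\Omega)}^2$, take square roots, and insert the bound on $\|\nabla e\|_\Omega$ from the first step to conclude. The gradient estimate is an immediate algebraic consequence of Proposition \ref{prop:residualest} once one observes that the Tikhonov term embeds $\gamma_T h^{2k}\|\nabla e\|_\Omega^2$ into the triple norm; accordingly the only mildly delicate point in the whole argument is the $L^2$-control described above, where the non-vanishing trace of $e$ forces the decomposition $e = e_\pi + e_h$ so that Poincar\'e can be applied to the zero-trace discrete component.
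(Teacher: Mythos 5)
Your proposal is correct and follows essentially the same route as the paper: the gradient bound is read off from the Tikhonov term embedded in the triple norm of Proposition \ref{prop:residualest}, one divides by $\gamma_T^{\frac12}h^k$, and the $L^2$ part is recovered by a Poincar\'e argument. Your extra step of splitting $e = e_\pi + e_h$ before invoking Poincar\'e is a sound refinement rather than a deviation --- the paper's one-line ``apply Poincar\'e's inequality'' glosses over the fact that $e\vert_\Sigma = g - g_h \neq 0$, and your decomposition (with the resulting $h^k|u|_{H^{k+1}(\Omega)}$ term absorbed under $\gamma_T^{\frac12}h^k \lesssim 1$) supplies exactly the missing justification.
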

\begin{proof}
By the definition of the triple norm there holds
\[
\gamma^{\frac12}_T h^k \|\nabla (u - u_h)\|_\Omega \lesssim C_u
h^{k}+C_{\bfp} h^{l+1}+\|\delta f\|_\Omega+ h^{-\frac12+\zeta} \|\delta \psi\|_\Sigma.
\]
Then divide through by $\gamma^{\frac12}_T h^k$ and apply 
Poincar\'e's inequality.
\end{proof}

\begin{rem}\label{rem:low_order}
In the lowest order case, $k=1$, if $A$ is the identity matrix, the a priori bound can be achieved
also for $\gamma_T=0$. 
\end{rem}

Indeed observe that, with $e_h=i_h u - u_h$ and
$\bfxi_h = \bfR_h \bfp + \delta \bfp -
\bfp_h$ we have, using the Poincar\'e inequality
on discrete spaces:
\begin{align*}
\|h \nabla e_h\|_\Omega 
&\lesssim 
\|h^{\frac12} \jump{\nabla e_h \cdot \bfn}\|_{\mathcal{F} \setminus \mathcal{F}_\Sigma} 
+ \|h^{\frac12}\nabla e_h \cdot \bfn\|_{\Sigma} 
+ \underbrace{\|h^{-\frac12}
 e_h \|_{\Sigma}}_{=0} 
\\
&\lesssim \|h^{\frac12} \jump{\nabla e_h\cdot \bfn -\bfxi_h \cdot
  \bfn}\|_{\mathcal{F} \setminus\mathcal{F}_\Sigma} 
+ \|h^{\frac12}(\nabla
 e_h \cdot \bfn - \underbrace{\bfxi_h \cdot \bfn}_{=0})\|_{\Sigma} 
\\ 
&\lesssim \|\nabla e_h - \bfxi_h\|_\Omega.
\end{align*}
Then we proceed as in Corollary \ref{cor:H1apriori}.

\begin{prop}(Estimates of boundary data in natural norms.)\label{prop:boundaryest}
Assume that $(u,\bfp)$, $\bfp=A \nabla u$, is the solution to \eqref{weak_Cauchy} and
$(u_h,\bfp_h)$ the solution of \eqref{L2discmin}. Then the following
bound holds for the error in the approximation of the boundary data.
\begin{align*}
\|u - u_h\|_{H^{\frac12}(\Sigma)} + \|\psi - \bfp_h \cdot
\bfnu\|_{H^{-\frac12}(\Sigma)} 
&\lesssim
\|\delta \psi\|_{\Sigma}   + \|u - i_h u\|_{H^{\frac12}(\Sigma)} +
h^{\frac12} \|\psi - \psi_h\|_{\Sigma} 
\\
&\lesssim \|\delta
\psi\|_{\Sigma} + h^k |u|_{H^{k+1}(\Omega)}.
\end{align*}
\end{prop}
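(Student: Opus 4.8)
The plan is to exploit that for the reduced method \eqref{L2discmin} the boundary values of both discrete variables are essentially prescribed, so each term can be identified explicitly and then estimated by projection and trace arguments; I would treat the Dirichlet and Neumann contributions separately. For the Dirichlet term, $u_h \in V_g^k$ forces $u_h = g_h$ on $\Sigma$, and since $u\vert_\Sigma = g$ and $g_h = (i_h u)\vert_\Sigma$ by the definition of the nodal interpolant, the trace of the error satisfies $(u - u_h)\vert_\Sigma = (u - i_h u)\vert_\Sigma$. Hence $\|u - u_h\|_{H^{1/2}(\Sigma)} = \|u - i_h u\|_{H^{1/2}(\Sigma)}$, which already produces the middle term of the first bound with no perturbation contribution, the Dirichlet data being unperturbed.

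For the Neumann term I would use $\bfp_h \in D_{\tilde\psi}^l$, so that on $\Sigma$ there holds $\bfp_h \cdot \bfnu = \piF \tilde\psi = \psi_h + \piF \delta\psi$, where $\psi_h := \piF \psi$. Writing $\psi - \bfp_h\cdot\bfnu = (\psi - \psi_h) - \piF\delta\psi$ and applying the triangle inequality in $H^{-1/2}(\Sigma)$ splits the estimate into a projection part and a perturbation part. The perturbation part is immediate: since $H^{-1/2}(\Sigma)$ is weaker than $L^2(\Sigma)$ and $\piF$ is $L^2$-stable, $\|\piF\delta\psi\|_{H^{-1/2}(\Sigma)} \le \|\piF\delta\psi\|_\Sigma \le \|\delta\psi\|_\Sigma$.

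The crux, and the step I expect to require the most care, is the negative-norm estimate of the projection error, namely $\|\psi - \psi_h\|_{H^{-1/2}(\Sigma)} \lesssim h^{1/2}\|\psi - \psi_h\|_\Sigma$, which is where the gain of half a power of $h$ originates. I would argue by duality: for $v \in H^{1/2}(\Sigma)$, the facewise $L^2$-orthogonality of $\piF$ gives $\left<\psi - \psi_h, \piF v\right>_\Sigma = 0$, hence $\left<\psi - \psi_h, v\right>_\Sigma = \left<\psi - \psi_h, v - \piF v\right>_\Sigma \le \|\psi - \psi_h\|_\Sigma \, \|v - \piF v\|_\Sigma$. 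Invoking the facewise $L^2$-approximation property $\|v - \piF v\|_\Sigma \lesssim h^{1/2}\|v\|_{H^{1/2}(\Sigma)}$ (the delicate point, since $H^{1/2}$ is a fractional space and the estimate rests on interpolation between $L^2$ and $H^1$ together with the fact that $\|v\|_{H^{1/2}(\Sigma)}$ dominates the sum of the local seminorms $|v|_{H^{1/2}(F)}$) and taking the supremum over $v$ yields the claim. Combined with the two previous observations this proves the first inequality.

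Finally, for the second inequality I would reduce both surviving terms to $h^k|u|_{H^{k+1}(\Omega)}$ by standard estimates: boundedness of the trace map and \eqref{eq:approx_Lagrange} give $\|u - i_h u\|_{H^{1/2}(\Sigma)} \lesssim \|u - i_h u\|_{H^1(\Omega)} \lesssim h^k|u|_{H^{k+1}(\Omega)}$, while for the Neumann contribution I would use that $\psi = (A\nabla u)\cdot\bfnu$ is the trace of $A\nabla u \in H^k(\Omega)$, hence $\psi \in H^{k-1/2}(\Sigma)$, so that the $L^2$-projection error satisfies $\|\psi - \psi_h\|_\Sigma \lesssim h^{k-1/2}|\psi|_{H^{k-1/2}(\Sigma)} \lesssim h^{k-1/2}|u|_{H^{k+1}(\Omega)}$; multiplying by the factor $h^{1/2}$ already present in the first bound gives the desired $h^k$ rate. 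Note that the regularity of $\psi$ is the genuine bottleneck here, so the conclusion is insensitive to whether $l = k-1$ or $l = k$ is used.
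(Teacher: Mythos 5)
Your argument is correct and follows essentially the same route as the paper: the Dirichlet trace is handled identically, and the half-power gain in $h$ for the Neumann part comes from the same duality pairing in $H^{-\frac12}(\Sigma)$ combined with the facewise orthogonality of $\piF$ and the estimate $\|v-\piF v\|_\Sigma \lesssim h^{\frac12}\|v\|_{H^{\frac12}(\Sigma)}$ (the paper likewise obtains this by interpolation between $L^2(\Sigma)$ and $H^1(\Sigma)$ and density). The only genuine divergence is the last step, the bound on $\|\psi-\psi_h\|_\Sigma$: you invoke fractional-order approximation of $\piF$ on $\psi\in H^{k-\frac12}(\Sigma)$, obtained from the trace theorem applied to $A\nabla u\in H^k(\Omega)$, whereas the paper uses the best-approximation property of the $L^2$-projection to compare with $A\nabla i_h u\cdot\bfnu$ (which lies in $\mathbb{P}_l(F)$ facewise since $A$ is constant and $l\ge k-1$) and then applies the elementwise trace inequality \eqref{eq:trace} together with \eqref{eq:approx_Lagrange}. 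Both are valid; the paper's variant avoids approximation theory in fractional face norms and delivers the seminorm $|u|_{H^{k+1}(\Omega)}$ directly, while your trace-theorem route most naturally yields the full norm $\|u\|_{H^{k+1}(\Omega)}$ — a cosmetic discrepancy, not a gap. Your closing remark that the rate is limited by the regularity of $\psi$ rather than by the choice $l=k-1$ versus $l=k$ is consistent with the paper's standing assumptions.
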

\begin{proof}
Since we have assumed that the Dirichlet data are unperturbed and we
have defined  $u_h\vert_{\Sigma}  = g_h = i_h u\vert_\Sigma$, it
follows using (\ref{eq:approx_Lagrange}) that
\[
\|u - u_h\|_{H^{\frac12}(\Sigma)}=\|u - i_h u\|_{H^{\frac12}(\Sigma)}
\lesssim \|u - i_h u\|_{H^{1}(\Omega)} \lesssim h^k |u|_{H^{k+1}(\Omega)} .
\]
Recalling that $\bfp_h \cdot
\bfnu\vert_\Sigma = \tilde \psi_h$ we may write, with $\psi_h$ the
$L^2$-projection of $\psi$ such that $\psi_h\vert_F = \piF \psi$,
    \begin{equation*}
\left<\psi - \tilde \psi_h, v\right>_{\Sigma}
= \left<\psi - \psi_h, v\right>_{\Sigma} +
\left<\psi_h - \tilde \psi_h, v\right>_{\Sigma}
\lesssim 
\left<\psi - \psi_h, v - v_h\right>_{\Sigma}
+ +|\left<\delta \psi,v_h\right>_{\Sigma}|.
    \end{equation*}
We now choose $v_h$ so that $v_h|_{F}=\piF v$. Using the stability of the $L^2$ projection, bounds for $v \in
H^1(\Sigma)$, interpolation, and the density of $H^1(\Sigma)$ in
$H^{\frac12}(\Sigma)$ we have that
for $v \in H^{\frac12}(\Sigma)$, with $ \|v\|_{H^{\frac12}(\Sigma)}=1$ there holds
\[
\| v - v_h \|_{\Sigma} \lesssim  h^{\frac12} \|v\|_{H^{\frac12}(\Sigma)}
\lesssim h^{\frac12}.
\]
After bounding the perturbation term using the Cauchy-Schwarz,
inequality, duality and the approximation of the $L^2$-projection
\[
|\left<\delta \psi,v_h-v\right>_{\Sigma}|  +|\left<\delta
  \psi,v\right>_{\Sigma}| \lesssim \|\delta
\psi\|_{H^{-\frac12}(\Sigma)} + h^{\frac12} \|\delta \psi\|_\Sigma,
\]
It follows form the above relations that for $\|v\|_{H^{\frac12}(\Sigma)}=1$,
\[
\left<\psi - \bfp_h \cdot
\bfnu, v  \right>_{\Sigma}\lesssim
h^{\frac12} \|\psi - \psi_h\|_{\Sigma} + \|\delta \psi\|_\Sigma
\]
Note that by definition of the $L^2$-projection, and recalling that $A$
is constant and $A\nabla i_h u\cdot \bfnu\vert_F \in \mathbb{P}_l(F)$,
\[
\|\psi - \psi_h\|_{\Sigma} \leq \|A\nabla u\cdot \bfnu - A\nabla i_h
u\cdot \bfnu\|_{\Sigma} \lesssim h^{k-\frac12} |u|_{H^{k+1}(\Omega)}.
\]
The last inequality followed by an application of \eqref{eq:trace} on all
the boundary faces in $\Sigma$ followed by
\eqref{eq:approx_Lagrange}. Combining the above bounds completes the proof.
\end{proof}

For the error analysis we must construct a function in $H^1(\Omega)$
such that both boundary conditions can be estimated simultaneously in
natural norms and which is close enough to $u_h$ in terms of the
residual terms estimated in Proposition \ref{prop:residualest}. For
the construction we follow the arguments of \cite{Bur17b}.

\begin{prop}\label{prop:pivot}
Let $(u_h,\bfp_h)$ be the solution of \eqref{eq:compact}. Then
 for some $h_0 > 0$, for all $h<h_0$ there exists $\tilde u_h$ such that $\tilde u_h\vert_\Sigma = u_h
\vert_\Sigma$ and
\[
\| A \nabla \tilde u_h - \bfp_h\|_{H^{-\frac12}(\Sigma)} + h^{-1}
\|\tilde u_h - u_h\|_\Omega + \|\nabla(\tilde u_h -
u_h)\|_\Omega\lesssim \|A \nabla u_h - \bfp_h\|_\Omega.
\]
\end{prop}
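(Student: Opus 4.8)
The plan is to build $\tilde u_h = u_h + \delta u$ by adding an $H^1$-conforming correction $\delta u$ that is supported in the single layer of elements abutting $\Sigma$ and vanishes on the whole boundary $\partial K$ of each such element. Then automatically $\delta u|_\Sigma = 0$, so $\tilde u_h|_\Sigma = u_h|_\Sigma$ as required, and the gluing of element-wise pieces stays in $H^1(\Omega)$ since each piece is zero on every face. Writing $\bfr_h := A \nabla u_h - \bfp_h$ and $R := \|\bfr_h\|_\Omega$ for the right-hand side, the point is to choose $\delta u$ so that its conormal derivative on the boundary face cancels the projected flux mismatch, which simultaneously controls the interior terms and the boundary term.

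First I would treat each boundary face $F \in \mathcal{F}_\Sigma$, with $K$ the element containing it, and set $q_F := -\piF(\bfr_h \cdot \bfnu)|_F \in \mathbb{P}_l(F)$. On a reference element, and then by the affine scaling already carried out for $\bfeta_h$ above, I would construct a bubble $\delta u_K$ vanishing on $\partial K$ with $A\nabla \delta u_K \cdot \bfnu|_F = q_F$ and the stability $\|\nabla \delta u_K\|_K \lesssim h^{\frac12}\|q_F\|_F$; defining $\delta u$ as the sum of these bubbles (and zero on interior elements) gives the desired global function. Since $\bfr_h$ is piecewise polynomial, the trace inequality \eqref{eq:trace} combined with the inverse inequality \eqref{eq:inverse} yields $\|\bfr_h \cdot \bfnu\|_F \lesssim h^{-\frac12}\|\bfr_h\|_K$, hence $\|q_F\|_F \lesssim h^{-\frac12}\|\bfr_h\|_K$ and, after summation, $\|\nabla \delta u\|_\Omega \lesssim R$. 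Because each $\delta u_K$ vanishes on $\partial K$, a Poincar\'e inequality on $K$ gives $\|\delta u_K\|_K \lesssim h \|\nabla \delta u_K\|_K$, so that $h^{-1}\|\delta u\|_\Omega + \|\nabla \delta u\|_\Omega \lesssim R$, which settles the two interior contributions.

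It remains to bound the conormal trace. By construction $A\nabla \tilde u_h - \bfp_h = \bfr_h + A\nabla \delta u$, whose normal component on each $F$ equals $(1-\piF)(\bfr_h \cdot \bfnu)$. I would estimate this in $H^{-\frac12}(\Sigma)$ by duality: for $v \in H^{\frac12}(\Sigma)$, the $L^2$-symmetry and idempotency of $\piF$ give $\langle (1-\piF)(\bfr_h\cdot\bfnu), v\rangle_\Sigma = \langle \bfr_h\cdot\bfnu, (1-\piF)v\rangle_\Sigma \le \|\bfr_h\cdot\bfnu\|_\Sigma \, \|(1-\piF)v\|_\Sigma$. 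The second factor is $\lesssim h^{\frac12}\|v\|_{H^{\frac12}(\Sigma)}$ by the same face-approximation estimate already used in the proof of Proposition \ref{prop:boundaryest}, and the first is $\lesssim h^{-\frac12} R$ by the trace/inverse bound above; the two powers of $h$ cancel and yield $\|A\nabla \tilde u_h - \bfp_h\|_{H^{-\frac12}(\Sigma)} \lesssim R$, completing the estimate.

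The main obstacle is the local bubble construction. One must verify on the reference element that the conormal derivatives on $F$ of functions vanishing on $\partial \hat K$ realize all of $\mathbb{P}_l(\hat F)$ with a shape-regularity-dependent stability constant; this requires choosing the bubble degree large enough and using that $A\bfnu$ is transversal to $F$ (so $A\nabla(\cdot)\cdot\bfnu$ is a genuine transversal derivative), after which the $h^{\frac12}$ scaling follows exactly as in the affine-mapping computation for $\bfeta_h$. A secondary, milder point is that the boundary estimate must be obtained in the global negative norm rather than by summing face-local $H^{-\frac12}$ contributions; the duality argument above is designed precisely to avoid any local-to-global negative-norm summation.
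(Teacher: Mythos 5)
Your overall strategy is viable and genuinely different from the paper's, but the local construction on which everything rests fails as stated. You require a bubble $\delta u_K$ vanishing on all of $\partial K$ with $A\nabla \delta u_K\cdot\bfnu|_F = q_F$ for an \emph{arbitrary} $q_F\in\mathbb{P}_l(F)$. This is impossible: a polynomial (or any $C^1$) function vanishing on $\partial K$ has gradient orthogonal to the tangent planes of both faces meeting at any point of $\partial F$, hence its gradient — and a fortiori its conormal derivative — vanishes on $\partial F$. Concretely, if $\delta u_K = \lambda_0\cdots\lambda_d\, r$ (the only polynomials vanishing on $\partial K$), then on $F=\{\lambda_0=0\}$ one gets $\partial_\bfnu \delta u_K|_F = c\,h^{-1} b_F\, r|_F$ with $b_F=\lambda_1\cdots\lambda_d|_F$ the face bubble, so the attainable traces all carry the factor $b_F$ and never include, e.g., nonzero constants. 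The verification you defer to the reference element ("realize all of $\mathbb{P}_l(\hat F)$") is therefore exactly the step that breaks.

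The fix is standard and your downstream argument survives it: replace pointwise prescription by moment matching, i.e.\ choose $r\in\mathbb{P}_l(K)$ so that $\left< c\,h^{-1} b_F r, p\right>_F = \left< q_F, p\right>_F$ for all $p\in\mathbb{P}_l(F)$ (solvable and stable because the weighted Gram matrix $\left< b_F p, p'\right>_F$ is positive definite on the reference face). The corrected residual $(A\nabla\tilde u_h-\bfp_h)\cdot\bfnu$ is then only $L^2$-orthogonal to $\mathbb{P}_l(F)$ on each $F$ rather than equal to $(1-\piF)(\bfr_h\cdot\bfnu)$, but your duality step only uses that orthogonality together with $\|(1-\piF)v\|_\Sigma\lesssim h^{1/2}\|v\|_{H^{1/2}(\Sigma)}$ and the discrete trace bound $\|\bfr_h\cdot\bfnu\|_F\lesssim h^{-1/2}\|\bfr_h\|_K$, so the $H^{-1/2}$ estimate and the scalings $\|\nabla\delta u_K\|_K\lesssim h^{1/2}\|q_F\|_F$, $\|\delta u_K\|_K\lesssim h\|\nabla\delta u_K\|_K$ all go through. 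For comparison, the paper sidesteps the bubble issue entirely: it groups boundary faces into macro-patches $\tilde F$ with $\mathrm{diam}(\tilde F)/h$ large (whence the threshold $h_0$), corrects only the \emph{mean} of the flux mismatch on each $\tilde F$ using a piecewise-linear plateau function $\varphi_{\tilde F}\in H^1_0(\tilde P)$ normalized by $\int_{\tilde F}A\nabla\varphi_{\tilde F}\cdot\bfnu = \mathrm{meas}_{d-1}(\tilde F)$, and runs the same duality argument against piecewise constants. Your (repaired) version corrects more moments on single elements and needs no smallness of $h$; the paper's version avoids any delicate statement about attainable conormal traces at the price of the macro-patch geometry and the appendix construction.
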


\begin{proof}
We decompose $\Sigma$ in disjoint, shape regular,
elements $\{\tilde F\}$, with diameter $O(h)$. To each element $\tilde F$
we associate a bulk patch $\tilde P$ that extends $O(h)$ into
$\Omega$, $\partial \tilde P \cap \Sigma = \tilde F$. On each patch we
will define a function $\varphi_{\tilde F} \in H^1_0(\tilde P)$ such
that
\[
\int_{\tilde F} A \nabla \varphi_{\tilde F} \cdot \bfnu ~\mbox{d}s =
\int_{\tilde F} ~\mbox{d}s =: \mbox{meas}_{d-1}(\tilde F)
\]
and
\begin{equation}\label{eq:vphistab}
h^{-1} \|\varphi_{\tilde F}\|_{\tilde P} + \|\nabla \varphi_{\tilde
  F}\|_{\tilde P} \lesssim h^{\frac{d}{2}}.
\end{equation}
Under the condition that $h$ is small enough we may take
$\varphi_{\tilde F} \in V_{h,0}^1$.
An example of construction of the $\{\varphi_{\tilde F} \}_{\tilde F}$
is given in appendix. We introduce
the projection on constant functions on $\tilde F$, $\pi_{\tilde F}
:L^2(\tilde F) \mapsto \mathbb{R}$ defined by $\pi_{\tilde F} v :=
\mbox{meas}_{d-1}(\tilde F)^{-1} \int_{\tilde F} v ~\mbox{d}s$. Then
consider $u_{\tilde F} := \pi_{\tilde F} (\bfp_h -A
\nabla u_h \cdot \bfnu )$ and define
\[
\tilde u_h := u_h + \sum_{\tilde F} u_{\tilde F} \varphi_{\tilde F}.
\]
It then follows by the definition of $\tilde u_h$ and an inverse
inequality that
\begin{multline}\label{eq:pert_approx1}
h^{-2} \|\tilde u_h - u_h\|^2_\Omega+\|\nabla(\tilde u_h -
u_h)\|^2_\Omega
\lesssim \sum_{\tilde F} u_{\tilde F}^2 h^{-2} \|\varphi_{\tilde F}\|^2_{\tilde
  P} \\
\lesssim \sum_{\tilde F} h^{1-d} h^d \|A \nabla u_h \cdot \bfnu -
\bfp_h\|_{\tilde F}^2 
\lesssim \|A \nabla u_h \cdot \bfnu -
\bfp_h\|_{\Omega}^2.
\end{multline}
For the second inequality we used \eqref{eq:vphistab} and $|u_{\tilde F}| \leq h^{\frac{1-d}{2}} \|A \nabla u_h \cdot \bfnu -
\bfp_h\|_{\tilde F}$ and for the third we applied the trace
inequality \eqref{eq:trace} to every element face in each $\tilde F$.
The bound of the flux on $\Sigma$ is shown observing that by the
definition of the $u_{\tilde F}$ and $\varphi_{\tilde F}$, for any $v
\in H^{\frac12}$ with $\|v\|_{H^{\frac12}(\Sigma)}=1$,
    \begin{equation*}
\left<A\nabla \tilde u_h - \bfp_h, v\right>_{\Sigma}
= \left<A\nabla \tilde u_h - \bfp_h, v - v_h\right>_{\Sigma}
\lesssim h^{\frac12} \|A\nabla \tilde u_h - \bfp_h\|_\Sigma.
    \end{equation*}
Here we $v_h$ is the piecewise constant function such that
$v_j\vert_{\tilde F} = \pi_{\tilde F} v$. We conclude by applying once
again the trace inequality \eqref{eq:trace} on every face in $\Sigma$
and using a triangle inequality and arguments similar to those used in
\eqref{eq:pert_approx1}, showing that
\[
h^{\frac12} \|A\nabla \tilde u_h - \bfp_h\|_\Sigma \lesssim \|A\nabla
u_h - \bfp_h\|_\Omega + \left(\sum_{\tilde F} u_{\tilde F}^2 h^{-2} \|\varphi\|^2_{\tilde
  P}\right)^{\frac12} \lesssim \|A\nabla
u_h - \bfp_h\|_\Omega
\]
\end{proof}

\begin{thm}\label{thm:condstab_est}(Conditional error estimates.)
Assume that $(u,\bfp)$ is the solution to \eqref{weak_Cauchy}, with $u
\in H^1(\Omega) \cap H^{k+1}(\Omega)$ and $\bfp = A \nabla u$,
$(u_h,\bfp_h)$ either the solution of \eqref{eq:compact}
(case $\zeta=1$) with
regularizing term given by \eqref{eq:tikhonov} and $k-1 \leq l \leq
k$, $m \ge l$, or the solution of
\eqref{eq:compact_red} (case $\zeta=0$).  Assume also that the
hypothesis of Theorems \ref{locbound} and \ref{globbound} are
satisfied and that $h< \min(h_0,\gamma_T^{-\frac1 {2k}})$, where $h_0$
is the bound from Proposition \ref{prop:pivot}. Then there holds for all $G$ as defined in
Theorem \ref{locbound}, for some $\tau \in (0,1)$,
\begin{equation}\label{eq:loc_error}
\|u - u_h\|_{G} \lesssim  
(1+\gamma_T^{\frac 1 2})^\tau
C_E\, h^{\tau k}
\end{equation}
where
\begin{equation}\label{eq:factor}
C_E \equiv C_E(u,\delta f, \delta \psi,h) \lesssim  \gamma_T^{-\frac12} (C_u  + C_{\bfp}
h^{l+1-k} + h^{-k} \|\delta
f\|_{\Omega} + h^{-\frac12-k+\frac{\zeta}{2}} \|\delta \psi\|_\Sigma)
\end{equation}
with $C_u$ and $C_{\bfp}$ defined in Proposition \ref{prop:residualest}. 
The following
global estimate also holds for some $\tau \in (0,1)$, 
\begin{equation}\label{eq:glob_error}
\|u-u_h\|_\Omega \lesssim C_E \frac{1}{|\log((1+\gamma_T^{\frac 1 2}) h^k)|^\tau}.
\end{equation}
\end{thm}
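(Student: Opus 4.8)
The plan is to apply the conditional stability estimates of Theorems~\ref{locbound} and \ref{globbound} not to $u_h$ itself (which is neither in $H^1(\Omega)$ nor a solution of a Cauchy problem) but to the difference $w := u - \tilde u_h$, where $\tilde u_h \in H^1(\Omega)$ is the pivot function supplied by Proposition~\ref{prop:pivot}. Since $\tilde u_h|_\Sigma = u_h|_\Sigma$ and, by Proposition~\ref{prop:pivot}, $\|\tilde u_h - u_h\|_\Omega \lesssim h\,\|A\nabla u_h - \bfp_h\|_\Omega$, the triangle inequality reduces the claim to an estimate for $\|w\|_G$ (respectively $\|w\|_\Omega$); the correction $\tilde u_h - u_h$ is of strictly higher order and will be absorbed into the right-hand side. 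Throughout I abbreviate the combined data bound
\[
R := C_u h^k + C_{\bfp} h^{l+1} + \|\delta f\|_\Omega + h^{-\frac12+\frac{\zeta}2}\|\delta\psi\|_\Sigma,
\]
and record the identity $R \sim \gamma_T^{\frac12} h^k C_E$, so that the assumption $h < \gamma_T^{-1/2k}$ forces $\gamma_T^{\frac12}h^k < 1$ and hence $R \lesssim C_E$. (Propositions~\ref{prop:pivot} and \ref{prop:boundaryest}, though stated for one of the methods, apply verbatim to both, since their arguments use only the boundary conditions built into $V_g^k$ and $D_{\tilde\psi}^l$ and the stabilizer quantity $\|A\nabla u_h - \bfp_h\|_\Omega$.)

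The core step is to verify that $w$ solves the weak Cauchy problem \eqref{weak_Cauchy} with data controlled by $R$. For the boundary data I would invoke Proposition~\ref{prop:boundaryest}: the Dirichlet part satisfies $\|w\|_{H^{1/2}(\Sigma)} = \|u - u_h\|_{H^{1/2}(\Sigma)} \lesssim h^k|u|_{H^{k+1}(\Omega)}$, while for the Neumann part I insert $\bfp_h\cdot\bfnu = \tilde\psi_h$ and split
\[
\|(A\nabla w)\cdot\bfnu\|_{H^{-1/2}(\Sigma)} \le \|\psi - \bfp_h\cdot\bfnu\|_{H^{-1/2}(\Sigma)} + \|(A\nabla\tilde u_h - \bfp_h)\cdot\bfnu\|_{H^{-1/2}(\Sigma)},
\]
bounding the first term by Proposition~\ref{prop:boundaryest} and the second by the $H^{-1/2}(\Sigma)$ flux bound of Proposition~\ref{prop:pivot} together with $\|A\nabla u_h - \bfp_h\|_\Omega \lesssim R$, which follows from Proposition~\ref{prop:residualest}. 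For the bulk functional $l_w(v) = a(w,v)$, $v \in V_{\Sigma'}$, I integrate the $\bfp_h$-contribution by parts elementwise, producing three terms: the flux mismatch $(A\nabla\tilde u_h - \bfp_h, \nabla v)_\Omega$, controlled by $\|A\nabla\tilde u_h - \bfp_h\|_\Omega \lesssim R$; the constraint residual $(f - \nabla\cdot\bfp_h - \mu\tilde u_h, v)_\Omega$; and the Neumann mismatch $\langle \psi - \tilde\psi_h, v\rangle_\Sigma$, again bounded through Proposition~\ref{prop:boundaryest}. Collecting the residual contribution of Proposition~\ref{prop:residualest} with the $h^0$-scaled boundary contribution then yields $\|l_w\|_{(V_{\Sigma'})'} + \|w\|_{H^{1/2}(\Sigma)} + \|(A\nabla w)\cdot\bfnu\|_{H^{-1/2}(\Sigma)} \lesssim R$.

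I expect the main obstacle to be the constraint residual in the full method ($\zeta=1$): there Proposition~\ref{prop:residualest} only controls $\|h(f - \nabla\cdot\bfp_h - \mu u_h)\|_\Omega$, so a direct Cauchy--Schwarz estimate of $(f-\nabla\cdot\bfp_h-\mu u_h, v)_\Omega$ would lose a power of $h$. Here I would exploit the exact cellwise conservation \eqref{eq:loc_cons}, which makes $f - \nabla\cdot\bfp_h - \mu u_h$ orthogonal to constants on each $K$; subtracting the elementwise mean $\pi_W v$ and applying the Poincar\'e estimate $\|v - \pi_W v\|_K \lesssim h\|\nabla v\|_K$ restores the missing factor $h$ and gives the bound $\lesssim R\,\|v\|_{H^1(\Omega)}$. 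For the reduced method ($\zeta=0$) no such device is needed, since the triple norm already controls $\|f - \nabla\cdot\bfp_h - \mu u_h\|_\Omega$ directly. The second delicate point is the bookkeeping of how $\delta\psi$ enters: the boundary Neumann estimate of Proposition~\ref{prop:boundaryest} contributes $\|\delta\psi\|_\Sigma$ at scaling $h^0$, which dominates the weaker $h^{1/2}$ scaling of Proposition~\ref{prop:residualest} when $\zeta=1$, and this is precisely what produces the exponent $h^{-\frac12+\frac\zeta2}$ in $R$ and in $C_E$.

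Finally I would supply the a priori bound and close the argument. Corollary~\ref{cor:H1apriori} gives $\|u - u_h\|_{H^1(\Omega)} \lesssim C_E$, so after adding the pivot correction $\|w\|_{H^1(\Omega)} \le E \lesssim C_E$ (in the lowest order case $k=1$ with $A=I$ one may use Remark~\ref{rem:low_order} to allow $\gamma_T=0$). For the local estimate I apply Theorem~\ref{locbound} with $\varepsilon+\eta \sim R$ and $E_0 \le E \lesssim C_E$; since $E_0 + R \lesssim C_E$, this yields
\[
\|w\|_G \lesssim R^{\tau}(E_0+R)^{1-\tau} \lesssim (\gamma_T^{\frac12}h^k)^{\tau} C_E \le (1+\gamma_T^{\frac12})^{\tau} C_E\, h^{\tau k},
\]
which together with the higher-order pivot correction is \eqref{eq:loc_error}. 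For the global estimate I apply Theorem~\ref{globbound} with the same data: using $E + R \lesssim C_E$, $R/(E+R) \sim \gamma_T^{\frac12}h^k$, and the monotonicity of the modulus $\omega$ I obtain $\|w\|_\Omega \lesssim C_E\,\omega(\gamma_T^{\frac12}h^k) \lesssim C_E/|\log((1+\gamma_T^{\frac12})h^k)|^{\tau}$, which is \eqref{eq:glob_error}.
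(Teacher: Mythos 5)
Your proposal is correct and follows essentially the same route as the paper: pivot to $\tilde u_h$ via Proposition \ref{prop:pivot}, bound the Cauchy data of $u-\tilde u_h$ using Propositions \ref{prop:boundaryest}, \ref{prop:pivot} and \ref{prop:residualest}, and invoke Theorems \ref{locbound} and \ref{globbound} with $E_0\le E\lesssim C_E$ from Corollary \ref{cor:H1apriori}. The only cosmetic difference is how the factor $h$ is recovered on the constraint residual when $\zeta=1$: you use local conservation against piecewise constants plus an elementwise Poincar\'e inequality, whereas the paper tests the orthogonality in \eqref{eq:EL_2} with the elementwise $L^2$-projection of $v$ onto $W^m$ and picks up the extra term $s^*(z_h,v_h)\lesssim \|z_h\|_{1,h}\|v\|_{H^1(\Omega)}$ -- both are instances of the same Galerkin-orthogonality argument.
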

\begin{proof}
First observe that recalling the function $\tilde u_h$ from Proposition \ref{prop:pivot}
\begin{equation}\label{eq:triang_pivot}
\|u - u_h\|_{L^2(G)} \lesssim \|u - \tilde u_h\|_{L^2(G)} + h \|A\nabla
u_h - \bfp_h\|_{\Omega}.
\end{equation}
Since the second term is bounded in Proposition \ref{prop:residualest}
we only need to bound the first term of the right hand side.
To this end we will use that the error $\tilde e:= u-\tilde
u_h$ is a
solution to the equation \eqref{weak_Cauchy} for certain data. Observe
that using Propositions \ref{prop:boundaryest} and \ref{prop:pivot}
\begin{equation}\label{eq:boundary_est_pivot}
\|\tilde e\|_{H^{\frac12}(\Sigma)} + \|A \nabla \tilde e \cdot
\bfnu\|_{H^{-\frac12}(\Sigma)} \lesssim \|\delta
\psi\|_{\Sigma} + h^k |u|_{H^{k+1}(\Omega)}.
\end{equation}
Injecting $\tilde e$ in the weak formulation we see that for all $v
\in V_{\Sigma'}$,
\[
a(\tilde e,v) = a(u_h - \tilde u_h,v) + a(e,v) 
\]
where $e = u - u_h$.
Defining also the finite element residual, for all $v \in V_{\Sigma'}$, 
\begin{align} \nonumber
a(e,v) &= -(A \nabla e, \nabla v)_\Omega + (\mu e, v)_\Omega 
\\ \nonumber
&= (\bfxi -A \nabla e, \nabla v)_\Omega  
+  (\nabla \cdot \bfxi +  \mu e, v)_\Omega
-\left<\bfxi \cdot \bfnu,v \right>_{\Sigma} 
\\ \nonumber
&=: \left<r(e,\bfxi),v\right>_{(V_{\Sigma'})',V_{\Sigma'}}.
\end{align}
and comparing with equation \eqref{weak_Cauchy}, we may write the
right hand side
\begin{equation}\label{eq:error_rhs}
l(v) =  a(u_h - \tilde u_h,v) + \left<r(e,\bfxi),v\right>_{(V_{\Sigma'})',V_{\Sigma'}}.
\end{equation}
It remains to prove \eqref{bulk_data}. To this end we show the following bound 
\[
\|l(v)\|_{(V_{\Sigma'})'} \lesssim \tnorm{(e,\bfxi)}_{-\zeta} +\|z_h\|_{1,h}+ \|\bfxi \cdot \bfnu\|_{H^{-\frac12}(\Sigma)}+\|\delta f\|_{\Omega},
\]
where $\zeta = 1$ for the method \eqref{eq:compact} and $\zeta=0$ for
the method \eqref{eq:compact_red}.
Indeed, we can combine this bound with that of equation
\eqref{eq:boundary_est_pivot}, and apply the stability estimates in Theorems
\ref{locbound} and \ref{globbound} to the error $\tilde e$, resulting in the
claim.
First we use the Cauchy-Schwarz inequality on $a(u_h - \tilde u_h,v)$
followed by the bound of Proposition \ref{prop:pivot} leading to
\begin{equation*}
 a(u_h - \tilde u_h,v) \lesssim \|u_h - \tilde u_h\|_{H^1(\Omega)}
 \|v\|_{H^1(\Omega)} \lesssim \|A \nabla u_h - \bfp_h\|_{\Omega}
 \|v\|_{H^1(\Omega)} \leq \tnorm{(e,\bfxi)}_{-\zeta}\|v\|_{H^1(\Omega)}  .
\end{equation*}
When $\zeta=1$ and the method \eqref{eq:compact} is considered, we can
apply the orthogonality in the second term of the right hand side of \eqref{eq:error_rhs}. Choosing $v_h \in W^m$ to
be the elementwise $L^2$-projection of $v$ we obtain
\[
(\nabla \cdot \bfxi + \mu e, v)_\Omega = (\nabla \cdot \bfxi + \mu e, v -
v_h)_\Omega +s^*(z_h,v_h) - (\delta f, v_h)_\Omega.
\]
Using the approximation properties of the $L^2$-projection
it follows that
\[
(\nabla \cdot \bfxi + \mu e, v)_\Omega \lesssim (\|h (\nabla \cdot \bfxi +
ce)\|_\Omega+ \|\delta f\|_{\Omega}+\|z_h\|_{1,h}) \|v\|_{H^1(\Omega)}.
\]
Here we used also the fact that $s^*(z_h,v_h) \lesssim C\|z_h\|_{1,h} \|\nabla v_h\|_h
\lesssim \|z_h\|_{1,h} \|v\|_{H^1(\Omega)}$.

In case $\zeta=0$ the bound 
of the volume integral term is immediate by the
Cauchy-Schwarz inequality, 
\begin{align*}
(\bfxi-A \nabla e, \nabla v)_\Omega +  (\nabla \cdot \bfxi + \mu e, v)_\Omega  
&\leq
(\|A \nabla e - \bfxi\|^2_{\Omega} 
+\|\nabla \cdot \bfxi + \mu e\|^2_{\Omega})^{\frac12} \|v\|_{H^1(\Omega)}
\\
&\leq \tnorm{(e, \bfxi)}_0 \|v\|_{H^1(\Omega)}.
\end{align*}
For the boundary term we proceed using duality followed by the trace inequality
\[
\left<\bfxi \cdot \bfnu,v \right>_\Sigma \leq \|\bfxi \cdot
\bfnu\|_{H^{-\frac12}(\Sigma)} \|v\|_{H^{\frac12}(\Sigma)} \lesssim \|\bfxi \cdot
\bfnu\|_{H^{-\frac12}(\Sigma)}\|v\|_{H^1(\Omega)}.
\]
Collecting these bounds we obtain, with the two cases distinguished by $\zeta$,
\begin{multline*}
-(A \nabla e
- \bfxi, \nabla v)_\Omega +  (\nabla \cdot \bfxi + \mu e, v)_\Omega  - \left<\bfxi \cdot \bfnu,v \right>_{\Sigma}\\
\lesssim (\tnorm{(e,\bfxi)}_{-\zeta}+\zeta \|z_h\|_{1,h}+ \zeta\|\delta f\|_{\Omega}+ \|\bfxi \cdot \bfnu\|_{H^{-\frac12}(\Sigma)}) \|v\|_{H^1(\Omega)}.
\end{multline*}
We conclude that by Propositions \ref{prop:residualest},
\ref{prop:boundaryest} and \ref{prop:pivot} there holds
\begin{equation}\label{eq:rhs_bound}
\begin{array}{ll}
\|l(v)\|_{(V_{\Sigma'})'} & \lesssim \|u_h - \tilde u_h\|_{H^1(\Omega)}+\| r(e,\bfxi)\|_{(V_{\Sigma'})'} \\
& \lesssim \tnorm{(e,\bfxi)}_{-\zeta}  +\zeta\|z_h\|_{1,h}
+ \zeta \|\delta f\|_{\Omega}+ \|\bfxi \cdot \bfnu\|_{H^{-\frac12}(\Sigma)} \\
& \lesssim  C_u
h^{k}+C_{\bfp} h^{l+1} + \|\delta f\|_\Omega + h^{-\frac12+\frac{\zeta}{2}} \|\delta \psi\|_\Sigma.
\end{array}
\end{equation}
Here (and below) we use that $h^{\frac12} \|\psi - \psi_h\|_{\Sigma} \lesssim h^k
|u|_{H^{k+1}(\Omega)} \lesssim C_u
h^{k}$ to absorb the boundary error contribution.
We are now in position to prove the error estimate using Theorems
\ref{locbound} and \ref{globbound}.
To simplify the notation, we write $C_E = C_E(u,\delta f, \delta \psi,h)$.
First note that the error $\tilde e$ is a solution to the problem
\eqref{weak_Cauchy} with the right hand side defined by
\eqref{eq:error_rhs}. By \eqref{eq:boundary_est_pivot}
the inequality
\eqref{bc_data} is satisfied with 
\[
\eta \lesssim \|\delta \psi\|_{\Sigma} + h^k \|u\|_{H^{k+1}(\Omega)}.
\]
By \eqref{eq:rhs_bound} the inequality \eqref{bulk_data} holds with 
\[
\varepsilon \lesssim \gamma_T^{\frac 1 2}C_E h^k.
\]
The a priori bounds \eqref{eq:aprioriloc} and \eqref{eq:aprioriglob}
follow from Corollary \ref{cor:H1apriori} with
\[
E_0 \leq E \lesssim C_E.
\]
We then observe that, assuming $h < \gamma_T^{-\frac1 {2k}}$,
\[
E_0+\varepsilon+\eta \lesssim C_E,
\quad \varepsilon + \eta \lesssim (1+\gamma_T^{\frac 1 2}) C_E h^k.
\]
Applying these bounds in \eqref{eq:locbound} we obtain a 
bound for the first term on the right hand side of
\eqref{eq:triang_pivot},
\[
\|u - \tilde u_h\|_{L^2(G)} \lesssim (1+\gamma_T^{\frac 1 2})^\tau
C_E \, h^{\tau k}
\]
 leading to
the local error bound \eqref{eq:loc_error}. The global error bound
\eqref{eq:glob_error} is
obtained by inserting the above bounds on $E$,
$\varepsilon$ and $\eta$ into \eqref{eq:globbound}.
\end{proof}
\begin{rem}
Observe that from the definition of $C_E$ it follows that the bound
makes sense only when $h^{-k} \|\delta
f\|_{\Omega} + h^{-\frac12-k+\frac{\zeta}{2}} \|\delta \psi\|_\Sigma$
is small compared to $|u|_{H^{k+1}(\Omega)}$.
\end{rem}
\begin{rem}
It is possible to derive corresponding local estimates in the
$H^1$-norm, if similar stability estimates are available. Such
estimates will have the same rate as those in the $L^2$-norm, which is
expected to be sharp since no adjoint argument is available to improve
the convergence in the $L^2$-norm. In the numerical section we will
see that depending on the geometry of the Cauchy problem, the $L^2$-norm can perform better
than the $H^1$-norm.
\end{rem}
\subsection{Interlude on the well-posed case}
In case the problem under study satisfies the assumptions of the
Lax-Milgram's Lemma, Theorem \ref{thm:condstab_est} immediately leads to
optimal error estimates in the $H^1$-norm, using the stability
estimate $\|u\|_{H^1(\Omega)}\lesssim \|l\|_{H^{-1}(\Omega)}$ instead of the conditional stability. Since the problem is
well-posed we may take $\gamma_T=0$. Here we will instead focus on the convergence in the $L^2$-norm. In
this case we only require that the adjoint problem is well-posed and
satisfies a shift theorem for the $H^2$ semi-norm.  This analysis,
which is an equivalent of that in \cite{Bu13}, for the mixed finite
element case,
includes indefinite elliptic problems as well. For
simplicity we here restrict the discussion to the case of a convex
polygonal domain $\Omega$, 
homogeneous Dirichlet conditions and $A$ the identity matrix. 
We consider the problem: find $u \in H^1(\Omega)$ such that
\begin{equation}\label{eq:primal_wp}
\Delta u + \mu u = f \mbox{ in } \Omega
\end{equation}
with $u\vert_{\partial \Omega} = 0$ and the associated adjoint problem
\begin{equation}\label{adjoint}
\Delta \varphi + \mu \varphi = \psi \mbox{ in } \Omega
\end{equation}
with $\varphi\vert_{\partial \Omega} = 0$. The weak formulation of the
forward problem takes the form: find $u\in H_0^1(\Omega)$ such that 
\[
a(u,v) = (f,v)_{\Omega}\quad \forall v \in H^1_0(\Omega).
\]
We assume that $\mu$ is
such that the problem admits a unique solution, using Fredholm's alternative. Under the assumptions the adjoint
problem satisfies the regularity bound
\begin{equation}\label{eq:adjoint_stab}
\|\varphi\|_{H^2(\Omega)} \lesssim \|\psi\|_\Omega.
\end{equation}
For the discretization we here consider only
the formulation 
\eqref{eq:EL_1}-\eqref{eq:EL_2} with stabilizers given by
\eqref{eq:primal_stab} and \eqref{eq:tikhonov}, $\gamma_T=0$, but the result also holds
for the stabilizers \eqref{eq:primal_stab} and \eqref{eq:L2LS}. Homogeneous
Dirichlet conditions are imposed on all of $\partial \Omega$ in the
space $V_0^k$ and no boundary conditions are imposed on the space
where the fluxes are sought, $D^l=RT_{l}$. 
We now prove optimal convergence in the $L^2$-norm.
\begin{proposition}
Let $u \in H^{k+1}(\Omega) \cap H^1_0(\Omega)$ by the solution of
\eqref{eq:primal_wp} and $(u_h, \bfp_h, z_h) \in V_0^k \times D^l
\times W^m$, with $k-1\leq l \leq k$ and $l \leq m \leq k$, be a
solution of \eqref{eq:EL_1}-\eqref{eq:EL_2} with stabilizers given by
\eqref{eq:primal_stab} and \eqref{eq:tikhonov}. Then there holds
\[
\|u - u_h\|_{L^2(\Omega)} \lesssim h^{k+1} |u|_{H^{k+1}(\Omega)}.
\]
\end{proposition}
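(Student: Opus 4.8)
The plan is to prove the $L^2$ bound by an Aubin--Nitsche duality argument, but one adapted to the saddle-point structure of the primal-dual system: the relevant adjoint is the transpose of the full form $\mathcal{A}$, not merely the scalar adjoint \eqref{adjoint}. Writing $e=u-u_h$ and $\bfxi=\bfp-\bfp_h$, I would introduce a continuous adjoint triple $(\varphi,\boldsymbol\pi,\rho)$ defined by $\mathcal{A}[(v,\bfq,y),(\varphi,\boldsymbol\pi,\rho)]=(e,v)_\Omega$ for all admissible $(v,\bfq,y)$. The purpose of this definition is that testing with the error triple itself, $(v,\bfq,y)=(e,\bfxi,-z_h)$ (the sign flip on the multiplier slot matching the skew coupling through $b$), reproduces exactly
\[
\|e\|_\Omega^2 = \mathcal{A}[(e,\bfxi,-z_h),(\varphi,\boldsymbol\pi,\rho)].
\]

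Next I would identify the strong form of this adjoint and establish its regularity. Testing successively against $y$, $\bfq$ and $v$ shows that $\rho$ solves the scalar adjoint problem $\nabla\cdot(A\nabla\rho)+\mu\rho=e$ with homogeneous Dirichlet data, that $\boldsymbol\pi=A\nabla\varphi+2\nabla\rho$, and that $\varphi$ in turn solves a scalar problem whose right-hand side is proportional to $\Delta\rho\in L^2(\Omega)$. Applying the shift theorem \eqref{eq:adjoint_stab} twice then gives $\|\rho\|_{H^2(\Omega)}+\|\varphi\|_{H^2(\Omega)}\lesssim\|e\|_\Omega$ and hence $\|\boldsymbol\pi\|_{H^1(\Omega)}\lesssim\|e\|_\Omega$. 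Consequently, setting $(\varphi_h,\boldsymbol\pi_h,\rho_h)=(i_h\varphi,\bfR_h\boldsymbol\pi,\pi_W\rho)$, the approximation estimates \eqref{eq:approx_Lagrange}--\eqref{eq:approx_RT} (applied to the merely $H^2$-regular adjoint) yield $\tnorm{(\varphi-\varphi_h,\boldsymbol\pi-\boldsymbol\pi_h)}_{\sharp}+\|\rho-\rho_h\|_{1,h}\lesssim h\,\|e\|_\Omega$.

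With these pieces in place the estimate follows from Galerkin orthogonality. Consistency \eqref{eq:consist_full} with $\delta f=0$ gives $\mathcal{A}[(e,\bfxi,-z_h),(w_h,\bfy_h,r_h)]=t(u,w_h)$ for every discrete test triple, so subtracting the adjoint interpolant produces $\|e\|_\Omega^2=\mathcal{A}[(e,\bfxi,-z_h),(\varphi-\varphi_h,\boldsymbol\pi-\boldsymbol\pi_h,\rho-\rho_h)]+t(u,\varphi_h)$. I would bound the first term by the continuity of $\mathcal{A}$ (the variant of \eqref{eq:cont1} that keeps the multiplier of the first argument) by the product $(\tnorm{(e,\bfxi)}_{-1}+\|z_h\|_{1,h})\,(\tnorm{(\varphi-\varphi_h,\boldsymbol\pi-\boldsymbol\pi_h)}_{\sharp}+\|\rho-\rho_h\|_{1,h})$. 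By Proposition~\ref{prop:residualest} (with $\delta f=0$, $\delta\psi=0$, $\gamma_T=0$) the first factor is $\lesssim h^{k}|u|_{H^{k+1}(\Omega)}$, and by the previous paragraph the second is $\lesssim h\,\|e\|_\Omega$; the consistency term $t(u,\varphi_h)$ is of higher order. Dividing through by $\|e\|_\Omega$ gives the claimed $\|e\|_\Omega\lesssim h^{k+1}|u|_{H^{k+1}(\Omega)}$.

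The step I expect to be the main obstacle is the construction and regularity of the transpose adjoint $(\varphi,\boldsymbol\pi,\rho)$: since the dual stabilizer $s^\ast$ and the projection part of $t$ are mesh dependent and only positive semidefinite, one must argue that they enter the adjoint problem as consistent, lower-order perturbations so that the $H^2$ shift bound holds with a constant \emph{independent of $h$}. This uniform regularity is precisely what legitimises the gain of one power of $h$: the mesh-dependent terms carrying the Lagrange multiplier $z_h$ are paired, through the subtraction of $(\varphi_h,\boldsymbol\pi_h,\rho_h)$, against the $O(h)$ interpolation errors of a genuinely $H^2$-smooth adjoint rather than against the $O(1)$ adjoint itself; the latter pairing would only return the energy-order bound $O(h^{k})$, and it is the reason a naive residual representation using the scalar adjoint $\Delta\varphi+\mu\varphi=e$ fails to be optimal here. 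A secondary, routine point is the continuity of $\mathcal{A}$ with a nonvanishing multiplier in its first slot, obtained by a termwise Cauchy--Schwarz extension of \eqref{eq:cont1}.
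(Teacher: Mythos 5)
Your plan rests on a continuous adjoint triple $(\varphi,\boldsymbol{\pi},\rho)$ solving $\mathcal{A}[(v,\bfq,y),(\varphi,\boldsymbol{\pi},\rho)]=(e,v)_\Omega$, and you correctly flag its construction as ``the main obstacle'' --- but you do not overcome it, and this is a genuine gap rather than a routine verification. The form $\mathcal{A}$ contains the mesh-dependent, only semidefinite stabilizers $s^*$ and $t$ (built from $\pi_{X,l-1}$, $\pi_W$ and powers of $h$), so the adjoint problem you pose is itself $h$-dependent; its solvability at the continuous level does not follow from Proposition \ref{infsup} (whose inf-sup constant and norms are tied to the discrete spaces), and the uniform-in-$h$ $H^2$ shift bound you need cannot simply be borrowed from \eqref{eq:adjoint_stab}. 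Moreover the identity $\|e\|_\Omega^2=\mathcal{A}[(e,\bfxi,-z_h),(\varphi,\boldsymbol{\pi},\rho)]$ requires the adjoint equation to hold against the non-discrete triple $(e,\bfxi,-z_h)$, which forces you to extend $s^*$ and $t$ off the discrete spaces and to track the resulting consistency errors; none of this is carried out. Your formal identification of the strong form (e.g.\ $\boldsymbol{\pi}=A\nabla\varphi+2\nabla\rho$) presumes existence and is not a substitute for it.

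The paper reaches the conclusion without ever forming the system adjoint, which also refutes your closing claim that the scalar adjoint $\Delta\varphi+\mu\varphi=e$ ``fails to be optimal''. It uses \eqref{adjoint} twice: first with datum $z_h$, testing \eqref{eq:EL_1} with $(i_h\varphi,\bfR_h(\nabla\varphi))$, to prove the separate bound $\|z_h\|_\Omega\lesssim h^{k+1}|u|_{H^{k+1}(\Omega)}$; then with datum $e$, splitting $\|e\|_\Omega^2$ into $-(\nabla\varphi,\nabla u_h-\bfp_h)_\Omega+(\nabla\cdot\bfxi+\mu e,\varphi)_\Omega$ and applying Galerkin orthogonality in \eqref{eq:EL_1} and \eqref{eq:EL_2} separately. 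The extra power of $h$ in the first term comes not from pairing $z_h$ against an adjoint interpolation error but from the already-established $O(h^{k+1})$ bound on $\|z_h\|_\Omega$ together with $\nabla\cdot\bfR_h(\nabla\varphi)=\pi_{X,l-1}\Delta\varphi$; in the second term it comes from $\varphi-i_h\varphi$. If you wish to keep your structure, the honest route is to define $\rho$ and $\varphi$ as solutions of two successive scalar problems and then verify the adjoint equation up to controllable consistency terms --- at which point you will have reproduced the paper's two-step argument in heavier notation.
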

\begin{proof}
First observe that the result of Proposition \ref{prop:residualest}
holds with $\zeta=1$.
To prove optimal convergence of the error in the $L^2$-norm we first
show that the $L^2$-norm of the Lagrange multiplier goes to zero with
$O(h^{k+1})$. To this end let $\psi=z_h$ in \eqref{adjoint}, then
\[
\|z_h\|_\Omega^2 = (z_h, \Delta \varphi + \mu \varphi)_\Omega
= (z_h, \Delta \varphi - \nabla \cdot \bfR_h (\nabla \varphi) + \mu (
 \varphi - i_h \varphi))_\Omega - s[(u_h,\bfp_h),( i_h
 \varphi, \bfR_h (\nabla \varphi))]
\]
For the last equality we used equation \eqref{eq:EL_1} with $\bfq_h =
\bfR_h (\nabla \varphi)$ and $v_h = i_h \varphi$.
Using the divergence theorem elementwise we have
\begin{align*}
 (z_h, \Delta \varphi - \nabla \cdot \bfR_h (\nabla \varphi))_\Omega
& \leq \|z_h\|_{1,h} (\|\nabla \varphi - \bfR_h (\nabla
 \varphi)\|_\Omega + \|h^{\frac12} (\nabla \varphi - \bfR_h (\nabla
 \varphi)) \cdot \bfn\|_{\mathcal{F}}) 
 \\
& \lesssim \|z_h\|_{1,h} h |\varphi|_{H^2(\Omega)}
\end{align*}
where we recall that for this case $\|h^{-\frac12} z_h\|_{\partial
  \Omega} \leq \|z_h\|_{1,h}$ since no boundary conditions are imposed on the
flux variable. For the second term there holds using the
Cauchy-Schwarz inequality and the approximation properties of the
nodal and
Raviart-Thomas interpolants \eqref{eq:approx_Lagrange}, \eqref{eq:approx_RT}
\begin{align*}
(\nabla u_h - \bfp_h, \nabla i_h
 \varphi -  \bfR_h (\nabla \varphi))_\Omega
 &\lesssim \|\nabla u_h -
 \bfp_h\|_\Omega (\|\nabla i_h
 \varphi - \nabla \varphi\|_\Omega+\|\bfR_h (\nabla \varphi) - \nabla
 \varphi\|_\Omega)
 \\
&\lesssim  \|\nabla u_h -
 \bfp_h\|_\Omega h |\varphi|_{H^2(\Omega)}
\end{align*}
and
\[
t(i_h \varphi,u_h) \lesssim 
t(u_h,u_h)^{\frac12}\mu h^2 \|\varphi\|_{H^2(\Omega)}
\]
Using the stability of the adjoint problem we conclude that
\[
\|z_h\|_\Omega \lesssim h (\|\nabla u_h -
 \bfp_h\|_\Omega+\|z_h\|_{1,h}+ t(u_h,u_h)^{\frac12}) \lesssim h^{k+1} |u|_{H^{k+1}(\Omega)}.
\]
where we used the result of Proposition \ref{prop:residualest}, for unperturbed data
in the second inequality. We now proceed to prove the $L^2$-error
estimate. Let $e = u-u_h$, $\bfxi = \nabla u - \bfp_h$ and $\psi = e$ in \eqref{adjoint}, then
\begin{equation}\label{eq:error_rep}
\|e\|_\Omega^2 = (\nabla \varphi, \nabla e)_{\Omega} + (\mu \varphi,
e)_\Omega = 
-(\nabla \varphi, \nabla u_h - \bfp_h)_{\Omega} + (\nabla \cdot \bfxi + \mu e,
\varphi)_\Omega.
\end{equation}
We now proceed by applying Galerkin orthogonality in each of the two
terms on the right hand side. For the first term we get
\begin{align*}
(\nabla \varphi, \nabla u_h - \bfp_h)_{\Omega} 
&= ((1 - \bfR_h) \nabla
\varphi , \nabla u_h - \bfp_h)_{\Omega} - (\nabla \cdot \bfR_h (\nabla
\varphi), z_h) 
\\
&\leq h |\varphi|_{H^2(\Omega)} \|\nabla u_h -
\bfp_h\|_{\Omega} + |\varphi|_{H^2(\Omega)} \|z_h\|_\Omega.
\end{align*}
Here we used that $\nabla \cdot \bfR_h (\nabla
\varphi) = \pi_{X,l-1} \Delta \varphi$, and then the stability of the $L^2$-projection.
In the second term of the right hand side of \eqref{eq:error_rep} we use \eqref{eq:EL_2} leading to
\begin{align*}
(\nabla \cdot \bfxi + \mu e, \varphi)_\Omega 
&= (\nabla \cdot \bfxi + \mu e,
\varphi - i_h \varphi)_\Omega + s^*(z_h, i_h \varphi)
\\
&\lesssim h (\|h
(\nabla \cdot \bfxi + \mu e)\|_\Omega+ \|z_h\|_{1,h})  |\varphi|_{H^2(\Omega)}.
\end{align*}
We conclude by applying \eqref{eq:adjoint_stab} in \eqref{eq:error_rep} together
with Proposition \ref{prop:residualest} to obtain
\[
\|e\|_\Omega \lesssim h (\|h (\nabla \cdot \bfxi + \mu e)\|_\Omega +  \|\nabla u_h -
\bfp_h\|_{\Omega}+\|z_h\|_{1,h}) + \|z_h\|_\Omega \lesssim h^{k+1} |u|_{H^{k+1}(\Omega)}.
\]
\end{proof}
\section{Iterative solution of the inf-sup stable system}\label{sec:iterative}
Clearly the elimination of the dual variable is an important gain
compared to the original constrained system, in particular since the
resulting system is symmetric, positive definite and therefore can be
solved using the conjugate gradient method. We will here show how the
reduced method can be used to solve the full system in an iterative
procedure, which allows to recover the conservation properties and
error estimates of the full system while only solving the linear
system associated to the reduced system. The idea is to use the
Euler-Lagrange equations with the dual stabilizer
\eqref{eq:L2LS}, which leads to the mixed least squares method,
but consider the dual stabilizer as a perturbation that is eliminated
through iteration. The iterative scheme takes the form: let $z^0_h=0$
compute for $\kappa=0,1,2,3,4\hdots$
\begin{align}\label{eq:EL_ite1}
s[(u_h^{\kappa+1},\bfp^{\kappa+1}_h),(v_h,\bfq_h)]+b(\bfq_h, v_h,z^{\kappa+1}_h) & = 0 \\
b(\bfp^{\kappa+1}_h, u^{\kappa+1}_h,w_h) - s^*(z^{\kappa+1}_h,w_h)& = 
(f,w_h)_\Omega - s^*(z^{\kappa}_h,w_h),\label{eq:EL_ite2}
\end{align}
where $s$ and $s^*$ is defined by \eqref{eq:primal_stab} and \eqref{eq:L2LS}.
Clearly if the iteration converges, the resulting discrete solution
solves the inf-sup stable formulation for which $s^*\equiv 0$.
We will now prove the convergence of the scheme.
\begin{prop}\label{prop:iter}
Assume that $\gamma_T>0$.
Let $\kappa \rightarrow \infty$ in \eqref{eq:EL_ite1}-\eqref{eq:EL_ite2},
then $(u_h^{\kappa},\bfp^{\kappa}_h,z_h^\kappa) \rightarrow (u_h,\bfp_h,z_h)$
solution of \eqref{eq:EL_1}-\eqref{eq:EL_2}, with $s^*\equiv 0$.
\end{prop}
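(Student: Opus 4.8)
The plan is to regard \eqref{eq:EL_ite1}--\eqref{eq:EL_ite2} as a linear iteration for the error relative to the target solution $(u_h,\bfp_h,z_h)$ of \eqref{eq:EL_1}--\eqref{eq:EL_2} with $s^*\equiv0$, and to exhibit an energy that decreases at each step with summable increments. First I would introduce the iteration errors $e^\kappa:=u_h^\kappa-u_h$, $\bfe^\kappa:=\bfp_h^\kappa-\bfp_h$ and $\theta^\kappa:=z_h^\kappa-z_h$. Subtracting \eqref{eq:EL_1}--\eqref{eq:EL_2} (with $s^*\equiv0$) from \eqref{eq:EL_ite1}--\eqref{eq:EL_ite2} and noting that the term $s^*(z_h,\cdot)$ generated by the target solution appears identically on both sides of the second equation and therefore cancels, I obtain the error system
\begin{align}
\label{eq:iter_err1}
s[(e^{\kappa+1},\bfe^{\kappa+1}),(v_h,\bfq_h)]+b(\bfq_h,v_h,\theta^{\kappa+1}) &= 0,\\
\label{eq:iter_err2}
b(\bfe^{\kappa+1},e^{\kappa+1},w_h)-s^*(\theta^{\kappa+1},w_h) &= -s^*(\theta^{\kappa},w_h),
\end{align}
for all $(v_h,\bfq_h,w_h)\in V_0^k\times D_0^l\times W^m$; this is precisely the reduced scheme applied to the error, driven by the defect $-s^*(\theta^\kappa,\cdot)$.

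The crux is an energy identity. Taking $(v_h,\bfq_h)=(e^{\kappa+1},\bfe^{\kappa+1})$ in \eqref{eq:iter_err1} and $w_h=\theta^{\kappa+1}$ in \eqref{eq:iter_err2}, the coupling term $b(\bfe^{\kappa+1},e^{\kappa+1},\theta^{\kappa+1})$ can be eliminated, and writing $S^{\kappa+1}:=s[(e^{\kappa+1},\bfe^{\kappa+1}),(e^{\kappa+1},\bfe^{\kappa+1})]\ge0$ I get
\[
S^{\kappa+1}+s^*(\theta^{\kappa+1},\theta^{\kappa+1})=s^*(\theta^{\kappa},\theta^{\kappa+1}).
\]
Since $s^*$ is symmetric and positive semidefinite (a positive multiple of the $L^2(\Omega)$ inner product, so that $s^*(\theta,\theta)^{1/2}\sim\|\theta\|_\Omega$), the polarization identity rewrites the right hand side and, after rearrangement, yields the telescoping relation
\[
S^{\kappa+1}+\tfrac12 s^*(\theta^{\kappa+1}-\theta^\kappa,\theta^{\kappa+1}-\theta^\kappa)+\tfrac12 s^*(\theta^{\kappa+1},\theta^{\kappa+1})=\tfrac12 s^*(\theta^\kappa,\theta^\kappa).
\]
Summing over $\kappa$, with $\theta^0=-z_h$ so that $s^*(\theta^0,\theta^0)$ is finite, shows that $s^*(\theta^\kappa,\theta^\kappa)$ is non-increasing and that $\sum_\kappa\bigl(S^{\kappa+1}+\tfrac12 s^*(\theta^{\kappa+1}-\theta^\kappa,\theta^{\kappa+1}-\theta^\kappa)\bigr)\le\tfrac12 s^*(\theta^0,\theta^0)<\infty$. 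In particular $S^{\kappa+1}\to0$ and $\|\theta^{\kappa+1}-\theta^\kappa\|_\Omega\to0$. I stress that the polarization step is essential: a direct Cauchy--Schwarz bound on $s^*(\theta^\kappa,\theta^{\kappa+1})$ only gives monotonicity of the energy, not summability of the increments, and hence would not suffice to prove convergence.

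It remains to turn the vanishing of the increments into convergence of the errors, which is where the inf-sup stability of the target system is used. Testing \eqref{eq:iter_err1} with $(w_h,\bfy_h)$ and \eqref{eq:iter_err2} with $r_h$ and adding, the two representations combine, using $s^*\equiv0$ in the target form $\mathcal{A}$ of \eqref{eq:compact}, into
\[
\mathcal{A}[(e^{\kappa+1},\bfe^{\kappa+1},\theta^{\kappa+1}),(w_h,\bfy_h,r_h)]=s^*(\theta^{\kappa+1}-\theta^\kappa,r_h)
\]
for all $(w_h,\bfy_h,r_h)$. The right hand side is bounded by $\|\theta^{\kappa+1}-\theta^\kappa\|_\Omega\,\|r_h\|_{1,h}$ after a discrete Poincar\'e inequality, so Proposition \ref{infsup} (applied with $s^*\equiv0$, i.e. in the inf-sup stable regime $l=m$) together with the bound \eqref{eq:is2} on the test functions gives
\[
\tnorm{(e^{\kappa+1},\bfe^{\kappa+1})}_{-1}+\|\theta^{\kappa+1}\|_{1,h}\lesssim\|\theta^{\kappa+1}-\theta^\kappa\|_\Omega\to0.
\]
Finally, because $\gamma_T>0$ the contribution $\gamma_T h^{2k}\|\nabla\,\cdot\,\|_\Omega^2$ in $t$ controls $\|\nabla e^{\kappa+1}\|_\Omega\to0$, whence $\|e^{\kappa+1}\|_{H^1(\Omega)}\to0$ by the Poincar\'e inequality on $V_0^k$, and then $\|A\nabla e^{\kappa+1}-\bfe^{\kappa+1}\|_\Omega\to0$ forces $\|\bfe^{\kappa+1}\|_\Omega\to0$; combined with $\|\theta^{\kappa+1}\|_{1,h}\to0$ this establishes $(u_h^\kappa,\bfp_h^\kappa,z_h^\kappa)\to(u_h,\bfp_h,z_h)$. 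The main obstacle is exactly this last passage: the monotone energy alone does not yield convergence, and one must feed the multiplier increment back through the inf-sup bound to recover control of the full residual.
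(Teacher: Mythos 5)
Your proof is correct and follows essentially the same route as the paper: by linearity you reduce to the homogeneous error iteration, derive the same telescoping energy identity via polarization of $s^*$ to get summability of $s[(e^{\kappa+1},\bfe^{\kappa+1}),(e^{\kappa+1},\bfe^{\kappa+1})]$ and of the increments $\|\theta^{\kappa+1}-\theta^\kappa\|_\Omega$, and then invoke Proposition \ref{infsup} together with $\gamma_T>0$ and Poincar\'e to conclude. The only (minor) difference is organizational: you feed the increment through the inf-sup bound to control all three error components at once, whereas the paper first extracts $u_h^\kappa,\bfp_h^\kappa\to 0$ directly from the energy and uses the inf-sup only for the multiplier.
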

\begin{proof}
By linearity it is enough to prove that $(u_h^{\kappa},\bfp^{\kappa}_h,z_h^\kappa)$
goes to zero if $f\equiv 0,\, g\equiv 0,\, \psi \equiv 0$ in
\eqref{eq:EL_ite1}-\eqref{eq:EL_ite2}.
By taking $v_h = u_h^{\kappa+1}$, $\bfq_h= \bfp^{\kappa+1}_h$ and $w_h = -
z_h^{\kappa+1}$ and summing over $\kappa \in 0,\hdots n-1$ we obtain
\[
\sum_{\kappa=0}^{n-1}\left(
  s[(u_h^{\kappa+1},\bfp^{\kappa+1}_h),(u_h^{\kappa+1},\bfp^{\kappa+1}_h)] +
  s^*(z^{\kappa+1}_h-z^{k}_h,z^{\kappa+1}_h) \right)= 0
\]
and therefore using the telescoping sum
\[
\frac12 \|z^{n}_h\|_\Omega^2 + \sum_{\kappa=0}^{n-1}\left(
  s[(u_h^{\kappa+1},\bfp^{\kappa+1}_h),(u_h^{\kappa+1},\bfp^{\kappa+1}_h)] +  \frac12
  \|z^{\kappa+1}_h-z^{\kappa}_h\|_\Omega^2\right) = \frac12 \|z^{0}_h\|_\Omega^2.
\]
It follows that 
\[
\|A \nabla u_h^{\kappa} - \bfp_h^\kappa\|_{\Omega} +
\gamma_T\|h^k \nabla u_h^\kappa\|_\Omega+\|z^{\kappa+1}_h-z^{\kappa}_h\|_\Omega\rightarrow 0 \mbox{ when } k
\rightarrow \infty.
\]
Observe that for $\gamma_T>0$ this implies (by Poincar\'e's
inequality) that $u_h = \lim_{\kappa \rightarrow 0} u_h^\kappa= 0$ and
$\bfp_h = \lim_{\kappa \rightarrow 0} \bfp_h^\kappa = {\bf{0}}$. Using Theorem
\ref{infsup} we then conclude that $z^\kappa \rightarrow 0$. 
\end{proof}
\begin{rem}
If $k=1$ and $A$ is the identity, the conclusion of Proposition \ref{prop:iter} holds also for
$\gamma_T=0$. To see this recall the discussion after Remark \ref{rem:low_order} implying that 
\[
\|h \nabla u_h^\kappa\|_\Omega \lesssim \|\nabla u_h^{\kappa} - \bfp_h^\kappa\|_{\Omega}.
\]
The consequence is that $u_h = \lim_{\kappa \rightarrow 0} u_h^\kappa= 0$ as before.
\end{rem}
\section{Numerical example}
As a numerical illustration of the theory we consider the original Cauchy
problem discussed by Hadamard. In \eqref{Cauchy_strong} let $A=I$,
$\mu=0$, $f=0$, $\Omega:= (0,\pi)
\times (0,1)$, $\Gamma_N:=\{ x \in (0,\pi); y=0\}$, $\Gamma_D:=
\Gamma_N\cup \{x \in \{0,\pi \}; y \in (0,1)  \}$ and 
\begin{equation}\label{Cauchy_data}
\psi := -b_n \sin(n x).
\end{equation}
It is then straightforward to verify that 
\begin{equation*}
u_n = b_n n^{-1} \sin(nx) \sinh(n y)
\end{equation*}
solves \eqref{Cauchy_strong}. An example of the
exact solution for $n=5$ is given in Figure \ref{exact}. One may easily show show that the
choice $b_n = n^{-p}$, $p > 0$ leads to $\psi \rightarrow 0$ uniformly
as $n \rightarrow \infty$,
whereas, for any $y>0$, $u_n(x,y)$ blows up. Stability can only be obtained
conditionally, under the assumption that $\|u_n\|_{H^1(\Omega)} < E$ for some $E>0$, leading to the
relations \eqref{eq:locbound} and \eqref{eq:globbound}.

We choose $b_n:=1$ in \eqref{Cauchy_data} and impose homogenoeous
Dirichlet condition $u=0$ on $x \in (0,\pi),\, y=0$. On the lateral
boundaries $x=0$, $x=\pi$ we either impose homogenoeous
Dirichlet condition (case 1) or impose nothing (case 2). With these
data we then solve the
resulting Cauchy problem \eqref{Cauchy_strong}. We study the error in the
relative $L^2$-norms,
\begin{equation*}
\frac{\|u - u_h\|_{\Omega_\sigma}}{\|u\|_{\Omega_\sigma}}, \mbox{ where } \Omega_\sigma := (0,\pi)\times (0,\sigma),\quad \sigma \in \{1/2,\, 1\}.
\end{equation*}
We will also consider the relative $H^1$-semi-norm defined similarly.
In the graphics below, errors in the $L^2$-norm will be marked with circle
markers '$\circ$' and the error in the relative $H^1$-semi norm with
square markers '{\tiny{$\square$}}'. The case $\sigma=1$ will be
indicated with a filled markes, whereas the one for $\sigma=1/2$ with not filled.
%
All computations below were performed using
formulation \eqref{eq:compact} in the
package FreeFEM++ \cite{He12}. We consider the cases $k=1$ and $k=2$
for increasingly oscillating data with $n=1$ and $n=5$. To set
the regularization parameter $\gamma_T$ we performed a series of
computations on a mesh with $240 \times 80$ elements and unperturbed
data. We then chose the first $\gamma_T$ for which the influence of
the regularizing term was visible in the form of increasing error. The
resulting parameter was $10^{-4}$ both for $k=1$ and $k=2$. Observe
that for unperturbed data the regularization parameter could be chosen
to be zero. To minimize the influence of mesh structure we used
Union-Jack meshes, an example is given in Figure \ref{mesh}. We used
the iterative method of Section \ref{sec:iterative} to solve the
linear system and obtained convergence to $10^{-6}$ on the $L^2$-norm
of the increment after less than three iterations in all cases. In
experiments not presented here we used the reduced method and observed
similar accuracy of the approximations as those reported below.
\subsection{Case 1}
 In Figure \ref{k1} we show computations performed on a sequence
of structured meshes using $k=1$. From left to right we have $n=1$, and
$n=5$. We see that when $n=1$ the $H^1$ and $L^2$ errors converge with the
optimal orders $O(h^{k})$ and $O(h^{k+1})$ respectively.  For $n=5$ on
the other hand, (right plot of Figures \ref{k1} and \ref{k2}), when $k=1$ the relative error remains above $30\%$ on
all the meshes and the solution is clearly not resolved.

Increasing the order to $k=2$ changes the behaviour dramatically. The
results for this case is reported in Figure \ref{k2}. Here the dotted
reference lines are if $y \sim h^2$ and $y \sim h^3$ and we observe
that the $H^1$ and $L^2$ errors have optimal convergence, but the
local and the global errors.
For $n=5$ on the
coarse scales the problem is completely underresolved also in this
case. however the performance for $n=5$ is very different when $k=1$
and $k=2$. indeed for $k=1$ we do not observe any convergence on the
considered scales, whereas for $k=2$, the error decreases as expected
from the second refinement. Indeed we observe $O(h^2)$ convergence
from $O(1)$ errors to an error of order $10^{-4}$ for
$h=0.01$. Clearly there is a strong ``pollution'' effect of the Cauchy problem due
to oscillation in data. It appears that, similarly as for Helmholtz equation, using higher order
approximation leads to a method that is more robust in handling this phenomenon.

In Figure \ref{n3pert} we consider a similar computation with $n=3$,
but this time using the data $\tilde \psi = (1 + \delta u_{rand})
\psi$ where $u_{rand}$ is a finite element function where each degree
of freedom has been set randomly to a value in $[0,1]$. In the left
plot we give the convergence for the case $k=1$ and $\delta=0.02$ and in the right
$k=2$ with the same level of the perturbation. In both cases we observe stagnation when the error is of the
size of the perturbation. On finer meshes we also note that the error can
grow under refinement, this is consistent with theory (recall the
inverse power of $h$ in $C_E$ of \eqref{eq:factor}.)

\subsection{Case 2}
For the second case we only consider $n=1$. In Figure \ref{case2} we
report the relative errors for the cases $k=1$ (left plot) and $k=2$
(right plot). For $k=1$ the errors decrease during refinement. And on
the finest mesh, $h=1/400$, the error is $O(10^{-2})$ for the $L^2$-errors
and the local $H^1$-error. The global $H^1$-error is approximately a
factor four larger. All error quantities have similar behavior under
refinement. For $k=2$ on the other hand the errors are a factor $10$
smaller for comparable mesh-sizes and convergence appear to be
logarithmic for all quantities, which coincides with theory since all
domains where the errors are measured reach the undefined boundary.
For $h<0.02$ the errors grow in this case. For computations on finer
meshes not reported here this growth continuous. This saturation and error
growth in the case $k=2$ is most likely due to finite precision.
\begin{figure}[hbt]
\begin{center}
\includegraphics[origin=c,width=5in]{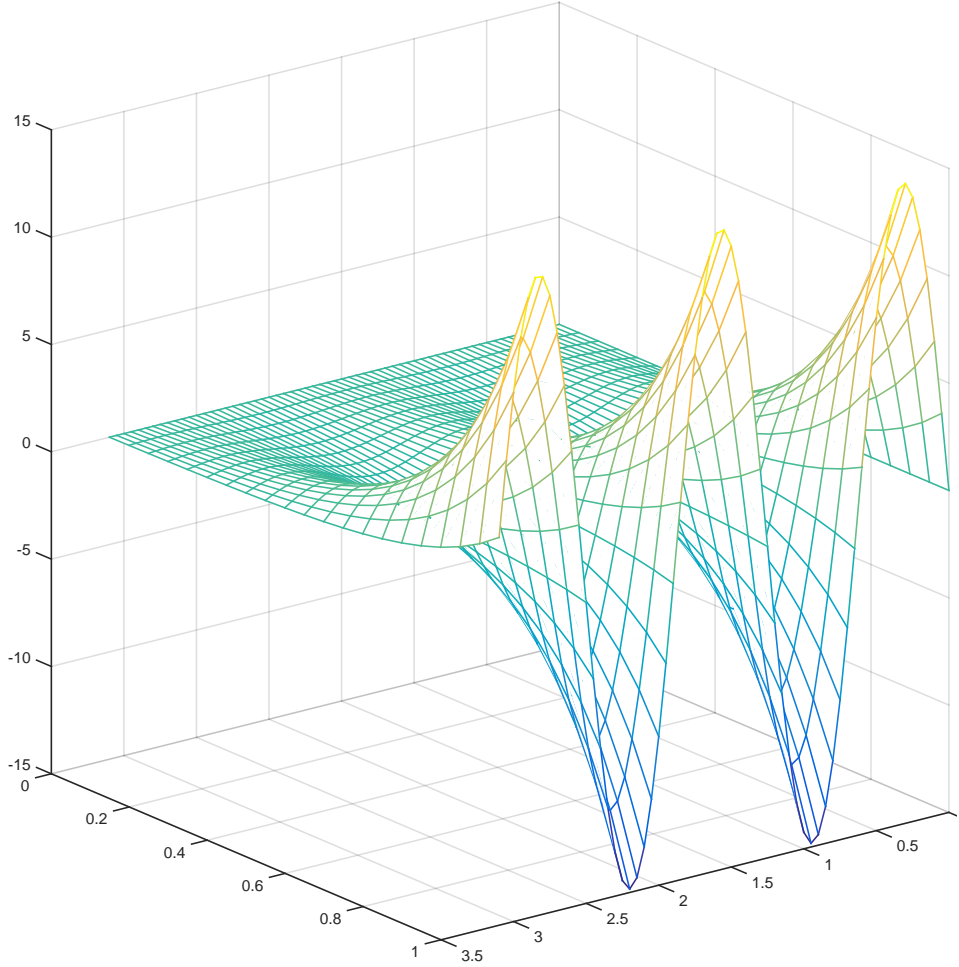}
\end{center}
\caption{Carpet plot of exact solution for $n=5$}\label{exact}
\end{figure}
\begin{figure}[hbt]
\begin{center}
\vspace{-2cm}
\includegraphics[angle=90,origin=c,width=5in]{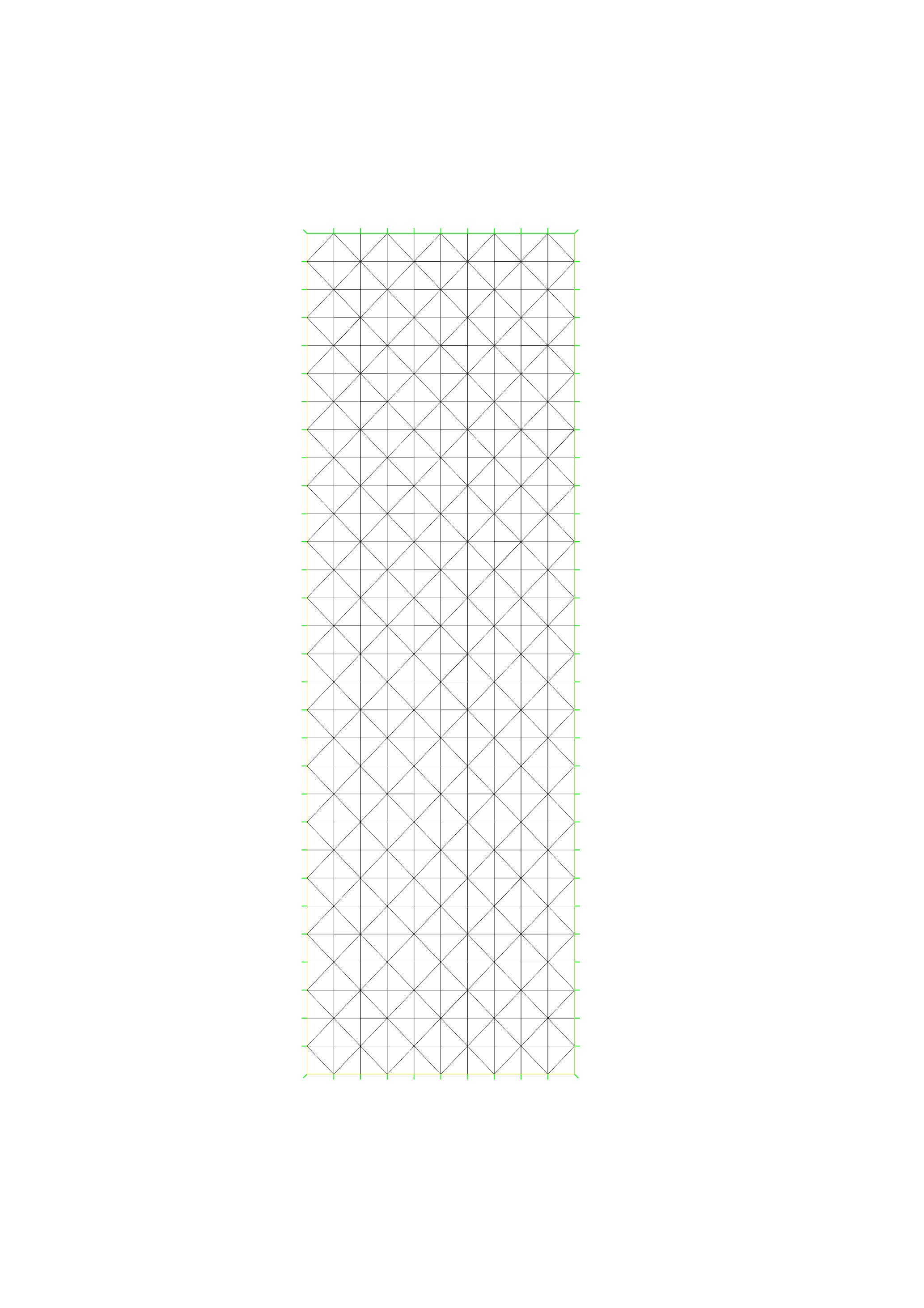}
\vspace{-4cm}
\end{center}
\caption{Example of a computational mesh}\label{mesh}
\end{figure}
\begin{figure}[hbt]
\begin{center}
\includegraphics[width=2.5in]{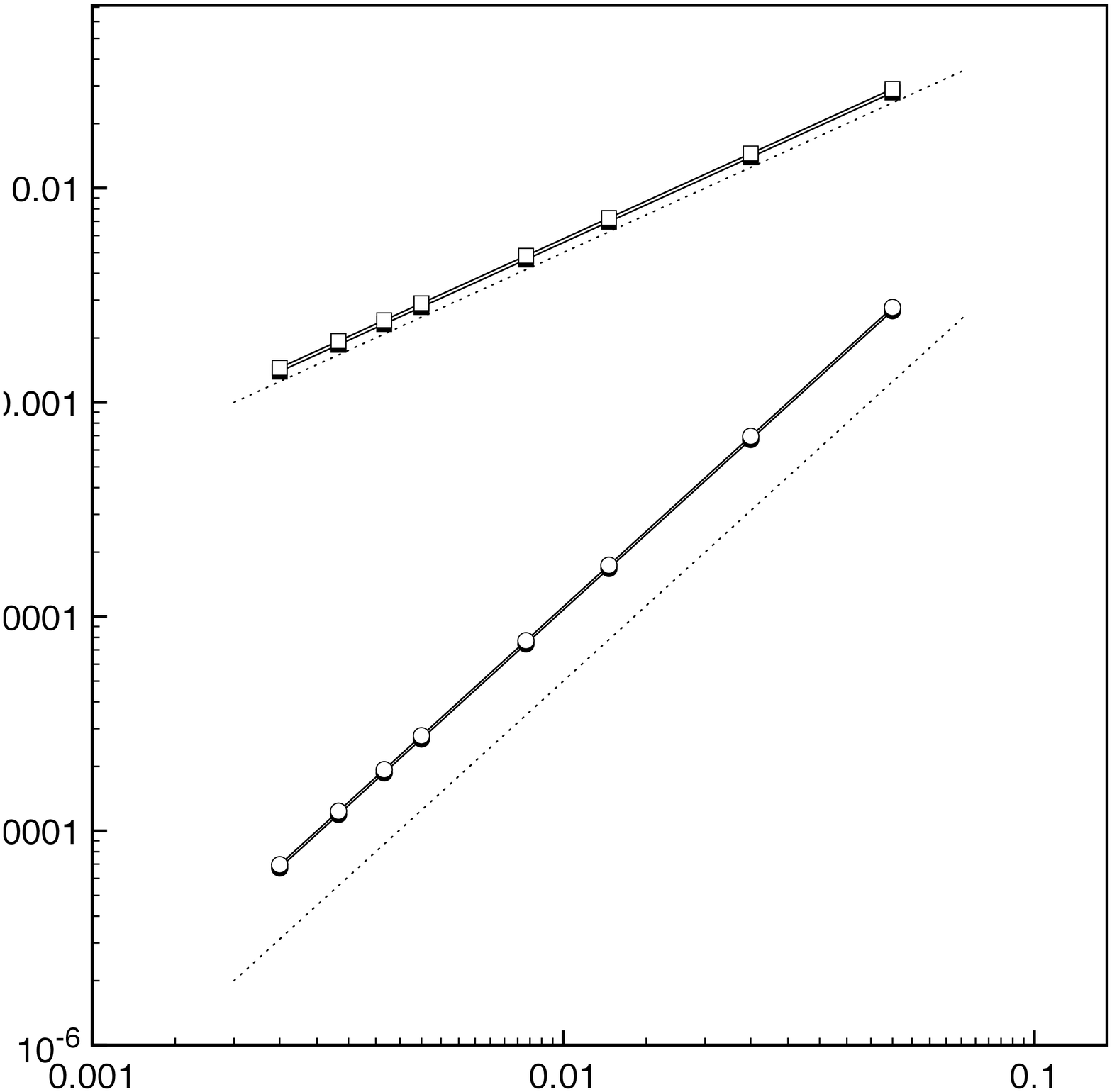}
\includegraphics[width=2.5in]{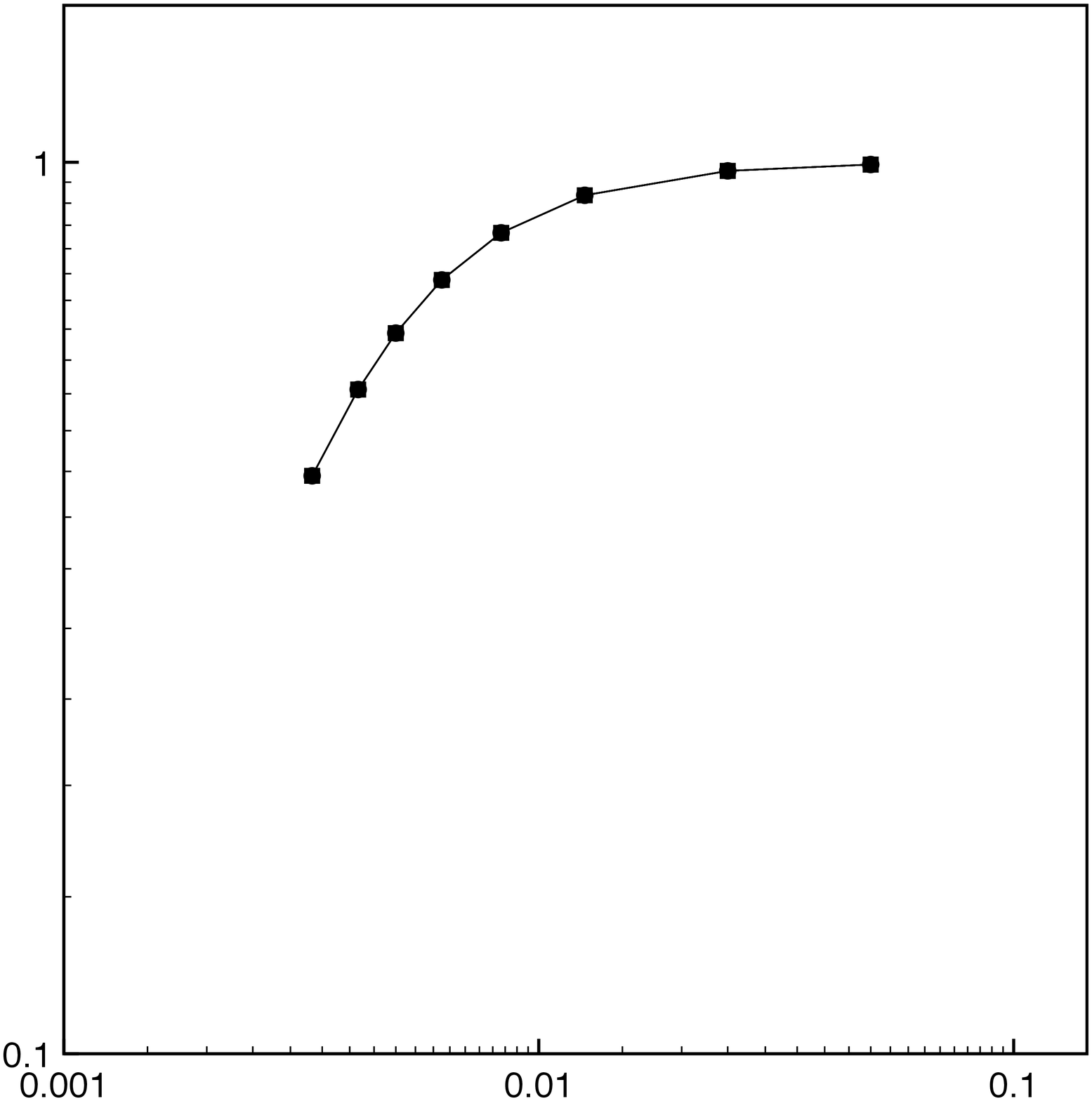}
\end{center}
\caption{Relative error against mesh size. $k=1$, from left to right:
  $n=1$, $n=5$. Square markers indicate 
  $H^1$-seminorm errors, circle markers indicate $L^2$-errors, filled
  markers indicate global errors and not filled markers indicate
  local errors. Dotted reference lines (left plot): top $y=O(h)$, bottom $y=O(h^2)$}\label{k1}
\end{figure}
\begin{figure}[hbt]
\begin{center}
\includegraphics[width=2.5in]{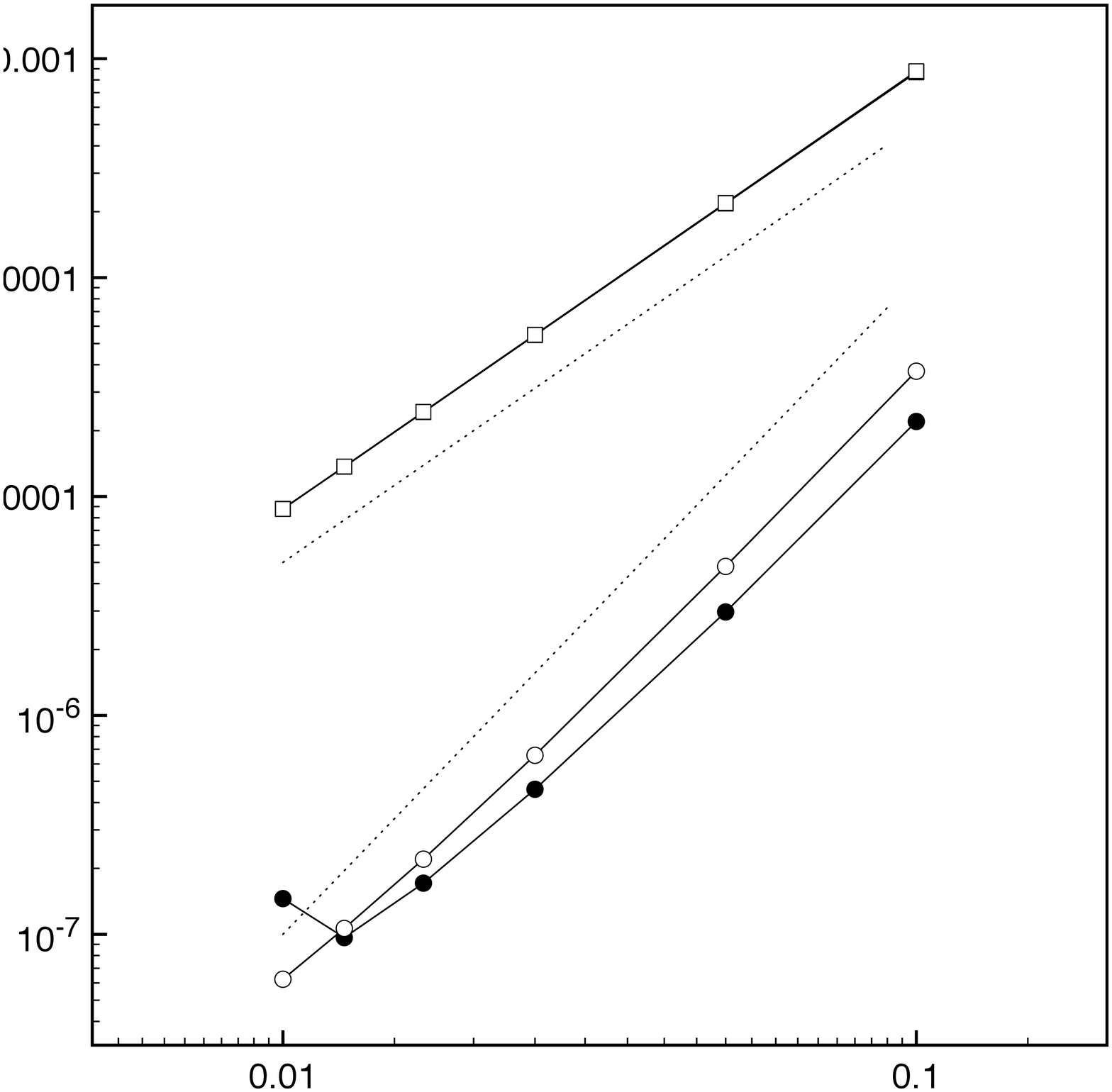}
\includegraphics[width=2.5in]{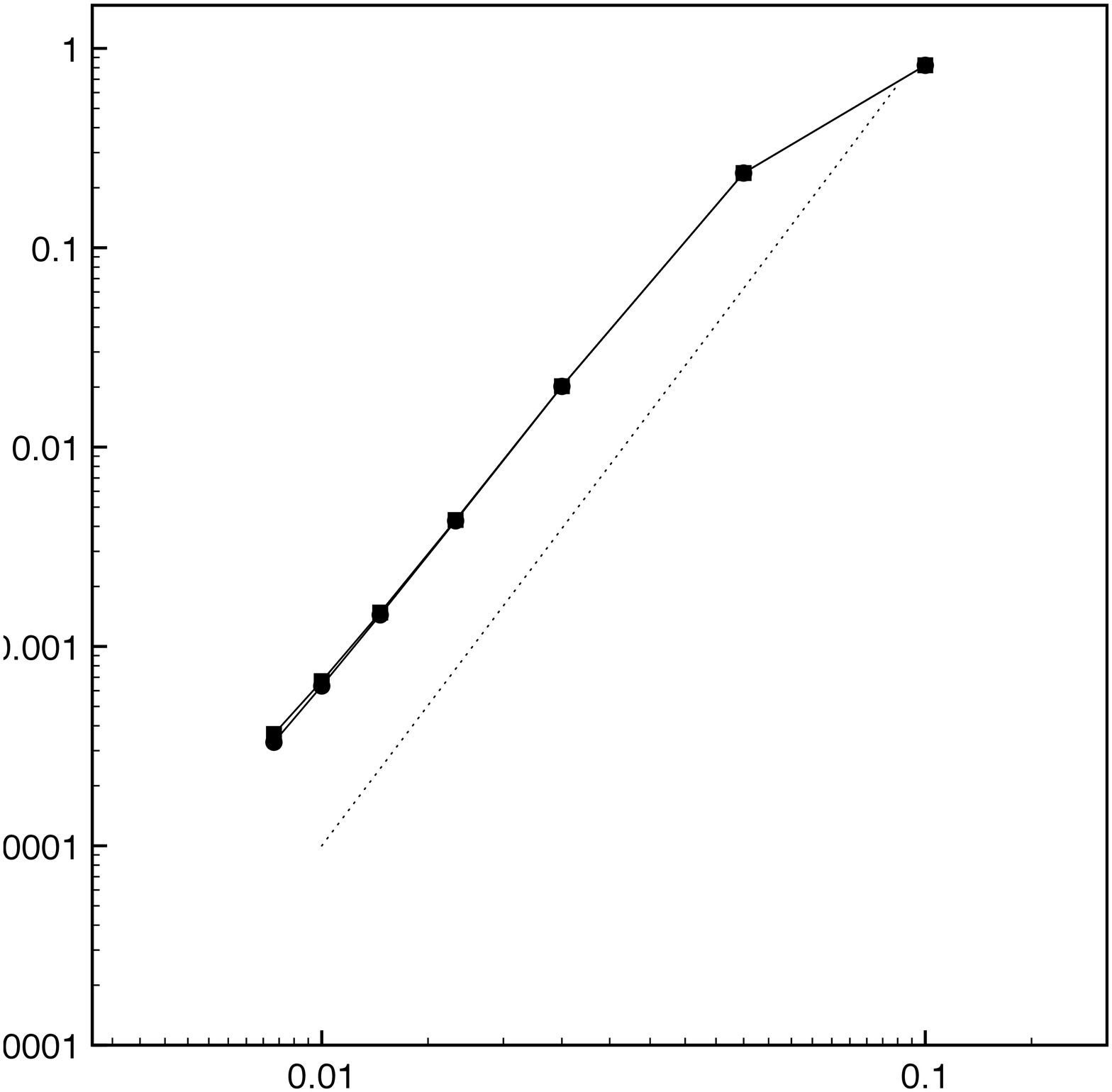}
\end{center}
\caption{Relative error against mesh size. $k=2$, from left to right:
  $n=1$, $n=5$. Square markers indicate 
  $H^1$-seminorm errors, circle markers indicate $L^2$-errors, filled
  markers indicate global errors and not filled markers indicate
  local errors. Dotted reference lines (left plot): top $y=O(h^2)$,
  bottom $y=O(h^3)$, (right plot) $y=O(h^2)$}\label{k2}
\end{figure}
\begin{figure}[hbt]
\begin{center}
\includegraphics[width=2.5in]{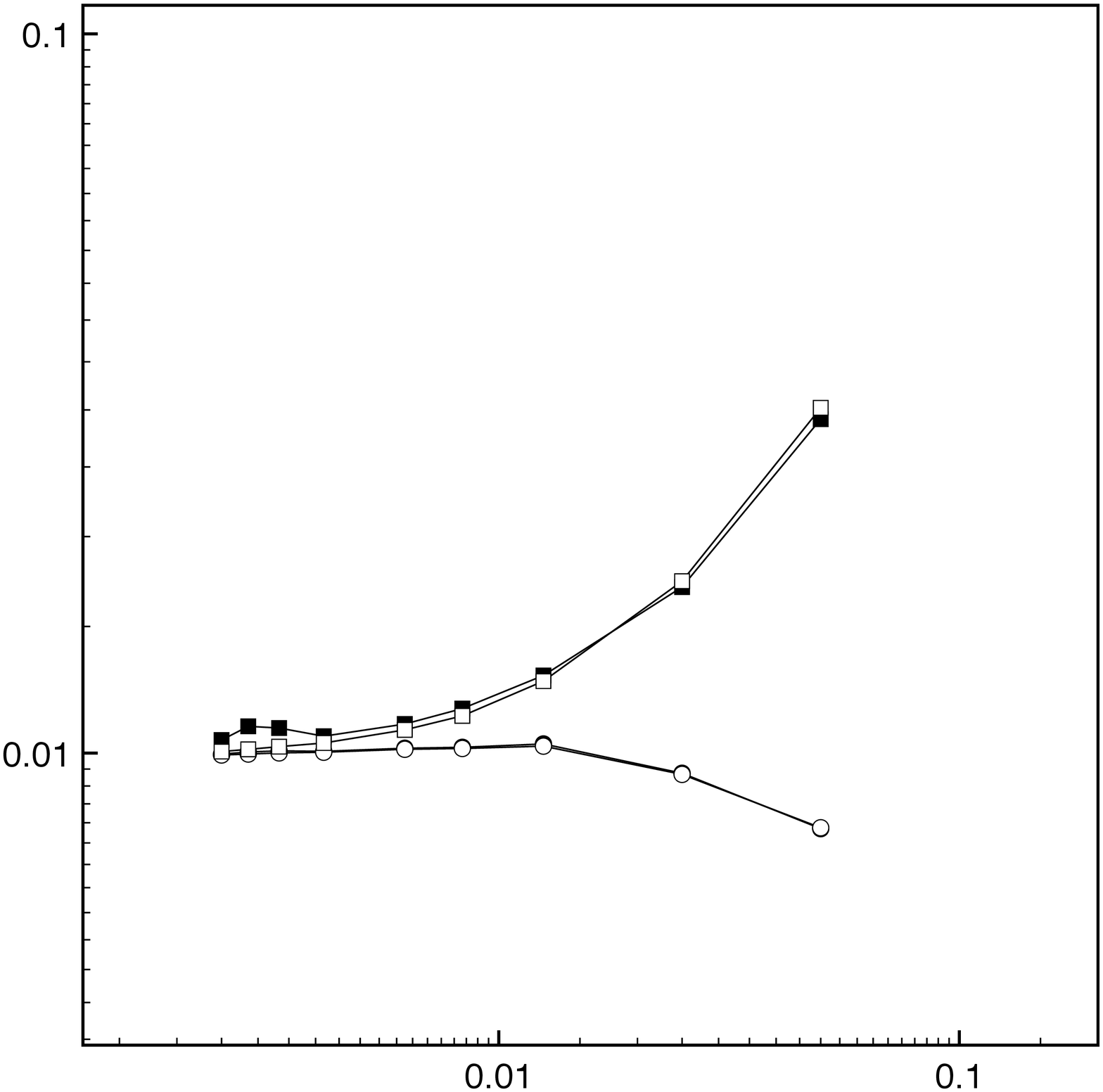}\includegraphics[width=2.5in]{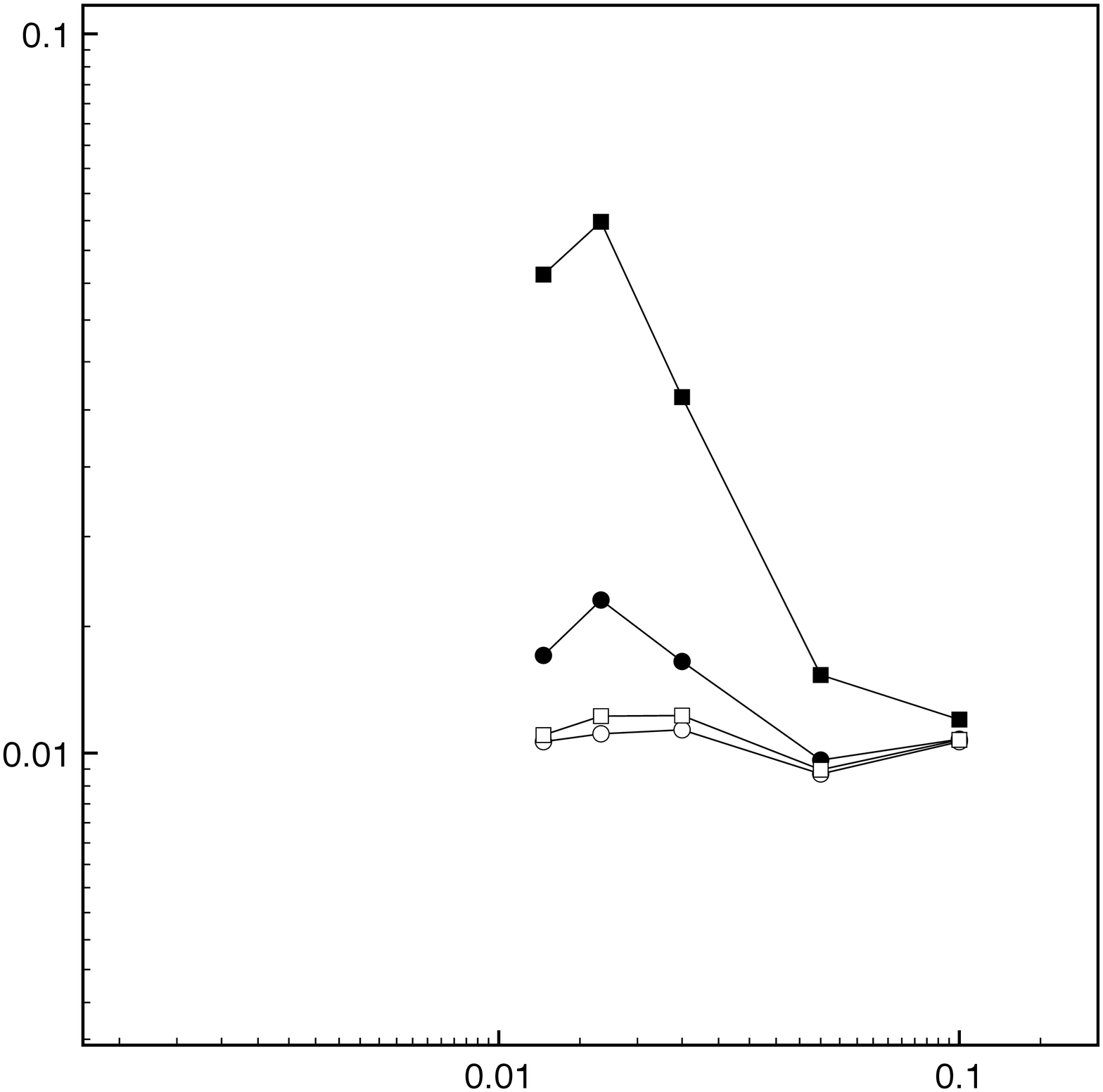}
\end{center}
\caption{Relative error against mesh size. Left graphic $k=1$, Neumann data with $2\%$
  relative perturbation. Right
  graphic $k=2$, Neumann data with $2\%$
  relative perturbation.  Square markers indicate 
  $H^1$-seminorm errors, circle markers indicate $L^2$-errors, filled
  markers indicate global errors and not filled markers indicate
  local errors. }\label{n3pert}
\end{figure}
\begin{figure}[hbt]
\begin{center}
\includegraphics[width=2.5in]{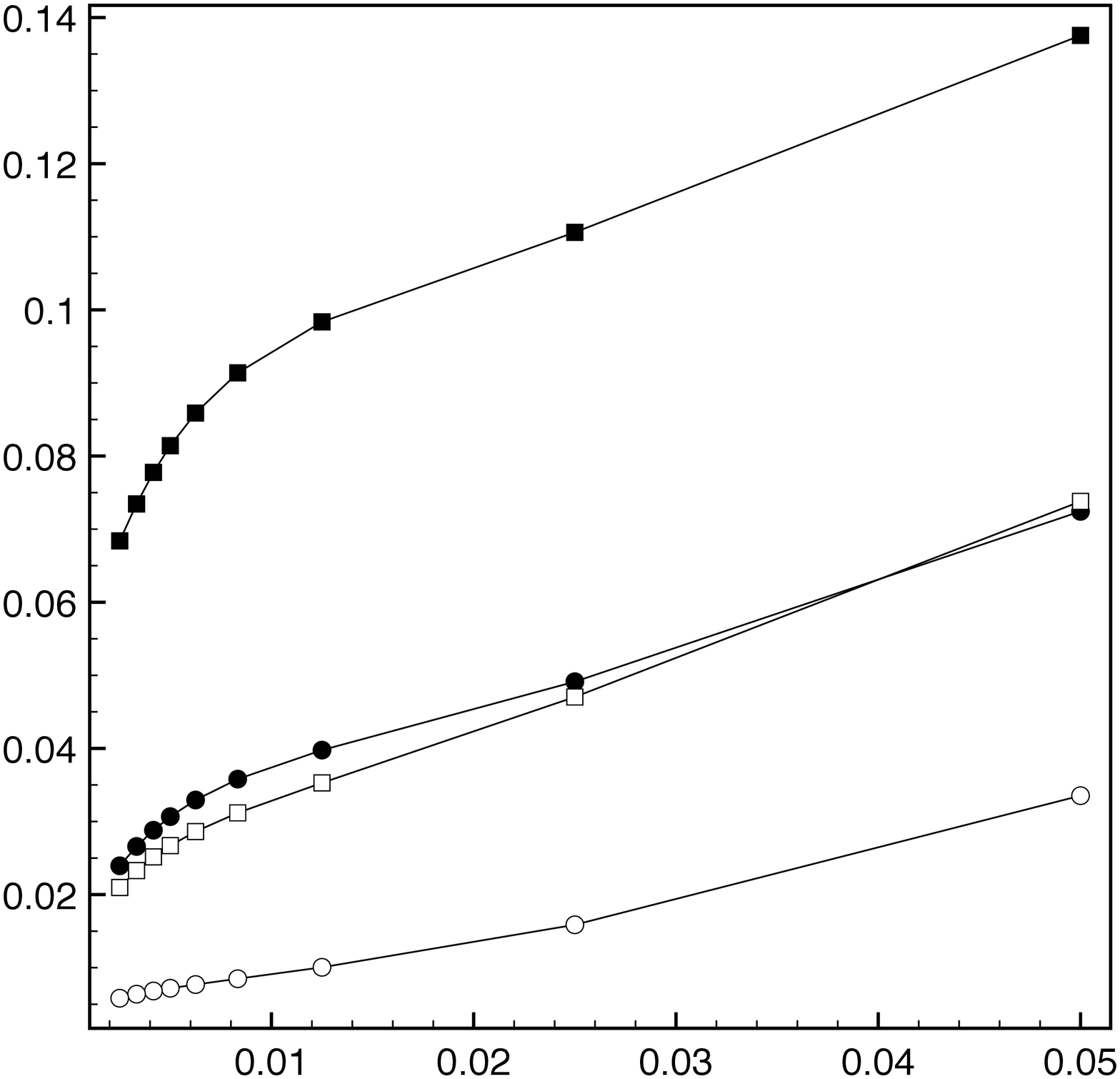}
\includegraphics[width=2.5in]{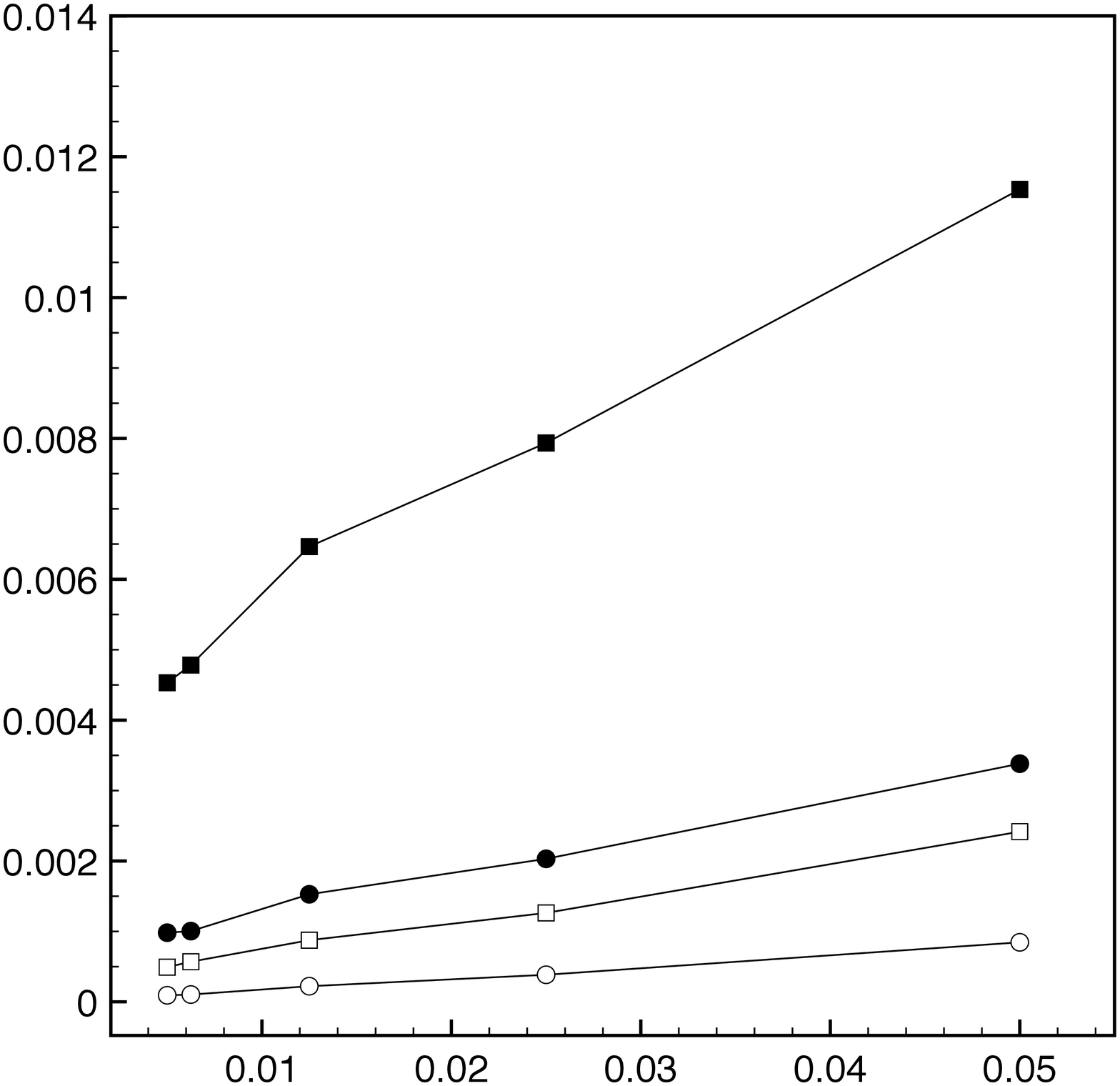}
\end{center}
\caption{Relative error against mesh size for case 2. From left to
  right: $k=1$ and $k=2$. Square markers indicate 
  $H^1$-seminorm errors, circle markers indicate $L^2$-errors, filled
  markers indicate global errors and not filled markers indicate
  local errors. }\label{case2}
\end{figure}
\section{Conclusion}
We have derived error estimates for a primal dual mixed finite element
method applied to the elliptic Cauchy problem. The results are optimal
with respect to the approximation orders of the finite element spaces
and the stability of the ill-posed problem. Introducing a special dual
stabilizer we reduce the scheme to a least squares mixed method for
which the number of degrees of freedom is significantly smaller, the
system matrix is symmetric, but the exact local flux conservation is
lost. This method satisfies similar estimates, but the results require slightly
more regularity of the source term and have slightly worse sensitivity to
perturbed data. We then showed that the reduced method can be used in
an iterative method to solve the full primal dual formulation, thus
recovering local conservation. The estimates show that if the exact solution is smooth
the use of high order approximation can pay off. However the
amplification of perturbations in data is also stronger with increased approximation
order. In numerical experiments we observed that the gain
obtained from the high order approximation is more important than
the increased sensitivity. Indeed both methods
performed better with higher order approximation, in particular for
oscillating solutions. As expected from the estimates the accuracy for
both methods was similar in our experiments, where the right hand side
was zero. The improved local conservation of the full primal dual
formulation was observed and the iterative procedure converged to a
relative residual of the increment in the $L^2$-norm of $O(10^{-6})$
within up to three iterations. Finally we point out the the method
presented herein also can be applied to inverse problems subject to
the Helmholtz equation, as those discussed in \cite{BNO17}.

\section*{Appendix}
Let $\lambda_{min}(A)$ and $\lambda_{max}(A)$ denote the smallest and
largest eigenvalues of the matrix $A$.
Assume, without loss of generality, that no $\tilde F$ has a corner of the domain through its interior.
For a patch $\tilde F$ let $\mathcal{N}_{\tilde F}$ denote the set of elements
with one face entirely in $\tilde F$, i.e. not touching the boundary
of $\tilde F$. Let $\mathcal{N}_{\tilde P}$ be the union of the elements
$\mathcal{N}_{\tilde F}$
and their interior neighbours, that is, any element
$K$ such that $K \cap \Sigma = \emptyset$ and $K \cap K' \ne \emptyset$ for some $K' \in
\mathcal{N}_{\tilde F}$. We also introduce the set
$\mathcal{N}_{\partial \tilde F}$ of elements in $\mathcal{N}_{\tilde
  F}$ with a neighbour that intersects $\partial \tilde F$. We define the patch $\tilde P:= \cup_{K \in
  \mathcal{N}_{\tilde P}}$. Now let $\tilde \varphi \in V_0^1$ such
that $\tilde \varphi\vert_{\partial \tilde P} = 0$ and $\tilde
\varphi(x_P) = 1$ for any interior vertex $x_P$ in $\tilde P$. It
follows that $\|\nabla \tilde \varphi\|_{\tilde P}^2 \lesssim h^{d-2}$. We will
first prove, using shape regularity and the properties of $A$, that provided $\mbox{diam}(\tilde F)/h$ is
large enough (but independently of $h$) there exists $c_0$, independent of $h$, such that
\begin{equation}\label{eq:low_bound}
c_0 h^{-1} \leq \mbox{meas}_{d-1}(\tilde F)^{-1} \int_{\tilde F} A\nabla \tilde \varphi \cdot \bfnu ~\mbox{d}
s =: \Theta(A,\tilde \varphi).
\end{equation}
Here we used that, for any element in $\mathcal{N}_{\tilde
  F}\setminus \mathcal{N}_{\partial \tilde F}$, $A\nabla \tilde \varphi \cdot \bfnu \ge
\lambda_{min}(A) |\nabla \tilde \varphi|$, with $h^{-1} \lesssim |\nabla \tilde
\varphi|$ on the face intersecting $\tilde F$.
\begin{multline*}
\int_{\tilde F} A\nabla \tilde \varphi \cdot \bfnu ~\mbox{d}
s = \sum_{K \in \mathcal{N}_{\partial \tilde F}} \int_{\partial K \cap
  \tilde F} A\nabla \tilde \varphi \cdot \bfnu ~\mbox{d}
s+\sum_{K \in \mathcal{N}_{\tilde F}\setminus \mathcal{N}_{\partial \tilde F} } \int_{\partial K \cap
  \tilde F} A\nabla \tilde \varphi \cdot \bfnu ~\mbox{d}
s \\
\ge -\sum_{K \in \mathcal{N}_{\partial \tilde F}} \lambda_{max}(A)
c_{max} h^{-1} h^{d-1} + \sum_{K \in\mathcal{N}_{\tilde F}\setminus \mathcal{N}_{\partial \tilde F} } \lambda_{min}(A)
c_{min} h^{-1} h^{d-1}
\end{multline*}
where $c_{min}$ and $c_{max}$ only depend on the shape regularity of
the elements. Observing that
$\mbox{card}(\mathcal{N}_{\partial \tilde F}) = O(h^{2-d})$ and
$\mbox{card}(\mathcal{N}_{\mathcal{N}_{\tilde F}\setminus
  \mathcal{N}_{\partial \tilde F}}) = O(h^{1-d})$ we see that the
second sum dominates the first for $\mbox{diam}(\tilde F)/h$
large enough. This concludes the proof of \eqref{eq:low_bound}.

Define $\varphi_{\tilde F} := \tilde \varphi/\Theta(A,\tilde
\varphi)$. By construction
\[
\int_{\tilde F} A \nabla \varphi_{\tilde F} \cdot \bfnu ~\mbox{d}s = \mbox{meas}_{d-1}(\tilde F).
\]
Consider now the $H^1$-seminorm of $\varphi_{\tilde
  F}$  on $\tilde P$,
\begin{equation}\label{eq:H1_semibound}
\|\nabla \varphi_{\tilde
  F}\|_{\tilde P} = \|\nabla \tilde \varphi\|_{\tilde P}/(c_0 h^{-1})^2\lesssim
(h^{d-2}/(c_0 h^{-1})^2)^{-\frac12}
\lesssim h^{\frac{d}{2}}.
\end{equation}
Using a Poincar\'e inequality we have $\|\varphi_{\tilde
  F}\|_{\tilde P} \lesssim h\|\nabla \varphi_{\tilde
  F}\|_{\tilde P}$ which together with \eqref{eq:H1_semibound} yields
the desired bound \eqref{eq:vphistab}.

\section*{Acknowledgment} 
EB was supported in part by the EPSRC grant EP/P01576X/1.
LO was supported by EPSRC grants EP/L026473/1 and EP/P01593X/1. 
ML was supported in part by the Swedish Foundation for Strategic 
Research Grant No.\ AM13-0029, the Swedish Research Council 
Grants Nos.\  2013-04708, 2017-03911, and the Swedish Research 
Programme Essence.
\bibliographystyle{plain}
\bibliography{LMS}

\begin{thebibliography}{10}

\bibitem{ARRV09}
G.~Alessandrini, L.~Rondi, E.~Rosset, and S.~Vessella.
\newblock The stability for the {C}auchy problem for elliptic equations.
\newblock {\em Inverse Problems}, 25(12):123004, 47, 2009.

\bibitem{ABF06}
M.~Aza{\"{\i}}ez, F.~Ben~Belgacem, and H.~El~Fekih.
\newblock On {C}auchy's problem. {II}. {C}ompletion, regularization and
  approximation.
\newblock {\em Inverse Problems}, 22(4):1307--1336, 2006.

\bibitem{Belg07}
F.~Ben~Belgacem.
\newblock Why is the {C}auchy problem severely ill-posed?
\newblock {\em Inverse Problems}, 23(2):823--836, 2007.

\bibitem{Bour05}
L.~Bourgeois.
\newblock A mixed formulation of quasi-reversibility to solve the {C}auchy
  problem for {L}aplace's equation.
\newblock {\em Inverse Problems}, 21(3):1087--1104, 2005.

\bibitem{Bour06}
L.~Bourgeois.
\newblock Convergence rates for the quasi-reversibility method to solve the
  {C}auchy problem for {L}aplace's equation.
\newblock {\em Inverse Problems}, 22(2):413--430, 2006.

\bibitem{BD10c}
L.~Bourgeois and J.~Dard\'e.
\newblock About stability and regularization of ill-posed elliptic {C}auchy
  problems: the case of {L}ipschitz domains.
\newblock {\em Appl. Anal.}, 89(11):1745--1768, 2010.

\bibitem{BD10a}
L.~Bourgeois and J.~Dard\'e.
\newblock A duality-based method of quasi-reversibility to solve the {C}auchy
  problem in the presence of noisy data.
\newblock {\em Inverse Problems}, 26(9):095016, 21, 2010.

\bibitem{BD10b}
L.~Bourgeois and J.~Dard\'e.
\newblock A quasi-reversibility approach to solve the inverse obstacle problem.
\newblock {\em Inverse Probl. Imaging}, 4(3):351--377, 2010.

\bibitem{BLP97}
J.~H. Bramble, R.~D. Lazarov, and J.~E. Pasciak.
\newblock A least-squares approach based on a discrete minus one inner product
  for first order systems.
\newblock {\em Math. Comp.}, 66(219):935--955, 1997.

\bibitem{Brenner03}
S.~C. Brenner.
\newblock Poincar\'e-{F}riedrichs inequalities for piecewise {$H^1$} functions.
\newblock {\em SIAM J. Numer. Anal.}, 41(1):306--324, 2003.

\bibitem{Bu13}
E.~Burman.
\newblock Stabilized finite element methods for nonsymmetric, noncoercive, and
  ill-posed problems. {P}art {I}: {E}lliptic equations.
\newblock {\em SIAM J. Sci. Comput.}, 35(6):A2752--A2780, 2013.

\bibitem{Bu14b}
E.~Burman.
\newblock Error estimates for stabilized finite element methods applied to
  ill-posed problems.
\newblock {\em C. R. Math. Acad. Sci. Paris}, 352(7-8):655--659, 2014.

\bibitem{Bu14a}
E.~Burman.
\newblock Stabilized finite element methods for nonsymmetric, noncoercive, and
  ill-posed problems. {P}art {II}: {H}yperbolic equations.
\newblock {\em SIAM J. Sci. Comput.}, 36(4):A1911--A1936, 2014.

\bibitem{Bu16}
E.~Burman.
\newblock Stabilised finite element methods for ill-posed problems with
  conditional stability.
\newblock {\em Lecture Notes in Computational Science and Engineering},
  114:93--127, 2016.
\newblock cited By 0.

\bibitem{Bur17b}
E.~Burman.
\newblock The elliptic {C}auchy problem revisited: control of boundary data in
  natural norms.
\newblock {\em C. R. Math. Acad. Sci. Paris}, 355(4):479--484, 2017.

\bibitem{Bur17a}
E.~Burman.
\newblock A stabilized nonconforming finite element method for the elliptic
  {C}auchy problem.
\newblock {\em Math. Comp.}, 86(303):75--96, 2017.

\bibitem{BHL16}
E.~{Burman}, P.~{Hansbo}, and M.~{Larson}.
\newblock {Solving ill-posed control problems by stabilized finite element
  methods: an alternative to Tikhonov regularization}.
\newblock {\em ArXiv e-prints}, September 2016.

\bibitem{BIHO17}
E.~{Burman}, J.~{Ish-Horowicz}, and L.~{Oksanen}.
\newblock {Fully discrete finite element data assimilation method for the heat
  equation}.
\newblock {\em ArXiv e-prints}, July 2017.

\bibitem{BNO17}
E.~{Burman}, M.~{Nechita}, and L.~{Oksanen}.
\newblock {Unique continuation for the Helmholtz equation using stabilized
  finite element methods}.
\newblock {\em ArXiv e-prints}, October 2017.

\bibitem{BO16}
E.~{Burman} and L.~{Oksanen}.
\newblock {Data assimilation for the heat equation using stabilized finite
  element methods}.
\newblock {\em ArXiv e-prints}, September 2016.

\bibitem{Ciarlet78}
P.~G. Ciarlet.
\newblock {\em The finite element method for elliptic problems}.
\newblock North-Holland Publishing Co., Amsterdam-New York-Oxford, 1978.
\newblock Studies in Mathematics and its Applications, Vol. 4.

\bibitem{CM15}
N.~C{\^\i}ndea and A.~M\"unch.
\newblock Inverse problems for linear hyperbolic equations using mixed
  formulations.
\newblock {\em Inverse Problems}, 31(7):075001, 38, 2015.

\bibitem{DHH13}
J.~Dard{\'e}, A.~Hannukainen, and N.~Hyv{\"o}nen.
\newblock An {$H\sb {\sf{div}}$}-based mixed quasi-reversibility method for
  solving elliptic {C}auchy problems.
\newblock {\em SIAM J. Numer. Anal.}, 51(4):2123--2148, 2013.

\bibitem{EG04}
A.~Ern and J.-L. Guermond.
\newblock {\em Theory and practice of finite elements}, volume 159 of {\em
  Applied Mathematical Sciences}.
\newblock Springer-Verlag, New York, 2004.

\bibitem{Falk90}
R.~S. Falk.
\newblock Approximation of inverse problems.
\newblock In {\em Inverse problems in partial differential equations ({A}rcata,
  {CA}, 1989)}, pages 7--16. SIAM, Philadelphia, PA, 1990.

\bibitem{FM86}
R.~S. Falk and P.~B. Monk.
\newblock Logarithmic convexity for discrete harmonic functions and the
  approximation of the {C}auchy problem for {P}oisson's equation.
\newblock {\em Math. Comp.}, 47(175):135--149, 1986.

\bibitem{FK17}
A.~{Fumagalli} and E.~{Keilegavlen}.
\newblock {Dual Virtual Element Methods for Discrete Fracture Matrix Models}.
\newblock {\em ArXiv e-prints}, November 2017.

\bibitem{Hada02}
J.~Hadamard.
\newblock Sur les probl{\`e}mes aux deriv{\'e}es partielles et leur
  signification physique.
\newblock {\em Bull. Univ. Princeton}, 1902.

\bibitem{Han82}
H.~Han.
\newblock The finite element method in the family of improperly posed problems.
\newblock {\em Math. Comp.}, 38(157):55--65, 1982.

\bibitem{He12}
F.~Hecht.
\newblock New development in freefem++.
\newblock {\em J. Numer. Math.}, 20(3-4):251--265, 2012.

\bibitem{LL69}
R.~Latt{\`e}s and J.-L. Lions.
\newblock {\em The method of quasi-reversibility. {A}pplications to partial
  differential equations}.
\newblock Translated from the French edition and edited by Richard Bellman.
  Modern Analytic and Computational Methods in Science and Mathematics, No. 18.
  American Elsevier Publishing Co., Inc., New York, 1969.

\bibitem{LWZ17}
Y.~{Liu}, J.~{Wang}, and Q.~{Zou}.
\newblock {A Conservative Flux Optimization Finite Element Method for
  Convection-Diffusion Equations}.
\newblock {\em ArXiv e-prints}, October 2017.

\bibitem{MS99}
P.~Monk and E.~S\"uli.
\newblock The adaptive computation of far-field patterns by a posteriori error
  estimation of linear functionals.
\newblock {\em SIAM J. Numer. Anal.}, 36(1):251--274, 1999.

\bibitem{Pay60}
L.~E. Payne.
\newblock Bounds in the {C}auchy problem for the {L}aplace equation.
\newblock {\em Arch. Rational Mech. Anal.}, 5:35--45 (1960), 1960.

\bibitem{Pay70}
L.~E. Payne.
\newblock On a priori bounds in the {C}auchy problem for elliptic equations.
\newblock {\em SIAM J. Math. Anal.}, 1:82--89, 1970.

\bibitem{RHD99}
H.-J. Reinhardt, H.~Han, and Dinh~Nho H\`ao.
\newblock Stability and regularization of a discrete approximation to the
  {C}auchy problem for {L}aplace's equation.
\newblock {\em SIAM J. Numer. Anal.}, 36(3):890--905, 1999.

\bibitem{TA77}
A.~N. Tikhonov and V.~Y. Arsenin.
\newblock {\em Solutions of ill-posed problems}.
\newblock V. H. Winston \& Sons, Washington, D.C.: John Wiley \& Sons, New
  York-Toronto, Ont.-London, 1977.
\newblock Translated from the Russian, Preface by translation editor Fritz
  John, Scripta Series in Mathematics.

\bibitem{WW15}
C.~{Wang} and J.~{Wang}.
\newblock {A Primal-Dual Weak Galerkin Finite Element Method for Second Order
  Elliptic Equations in Non-Divergence Form}.
\newblock {\em ArXiv e-prints}, October 2015.

\bibitem{WW17}
C.~{Wang} and J.~{Wang}.
\newblock {A Primal-Dual Weak Galerkin Finite Element Method for Fokker-Planck
  Type Equations}.
\newblock {\em ArXiv e-prints}, April 2017.

\end{thebibliography}

\end{document}